\let\@@enum@org\@@enum@
\def\@@enum@[#1]{\@@enum@org[\normalfont #1]}
\newtheorem{THM}{Theorem}
\newtheorem{LEM}[THM]{Lemma}
\newtheorem{COR}[THM]{Corollary}
\newtheorem{PROP}[THM]{Proposition}
\newtheorem{CON}[THM]{Conjecture}
\newtheorem{QUE}{Question}
\theoremstyle{remark}
\newtheorem{EX}[THM]{Example}
\newtheorem*{REM}{Remark}
\theoremstyle{definition}
\newtheorem{DEFN}[THM]{Definition}
\newcommand\form[1]{\langle #1\rangle}
\newcommand\fform[1]{\langle\!\langle #1\rangle\!\rangle}
\newcommand\co{\colon}
\newcommand\Z{\mathbb{Z}}
\newcommand\cay{\operatorname{Cay}}
\newcommand\aut{\operatorname{Aut}}
\newcommand\link{\operatorname{Link}}
\newcommand\abs[1]{\lvert #1\rvert}
\newcommand\osim{\mathord\sim}
\begin{document}
\title[One-ended doubles of free groups]{Hyperbolic surface subgroups of one-ended doubles of free groups}
\author{Sang-hyun Kim}
\address{Department of Mathematical Sciences, KAIST, 291 Daehak-ro, Yuseong-gu, Daejeon 305-701, Republic of Korea}
\email{shkim@kaist.edu}
\thanks{The first named author is supported by the Basic Science Research Program (2011-0026138) and the Mid-Career Researcher Program (2011-0027600) through the National Research Foundation funded by Korea Government.}

\author{Sang-il Oum}
\email{sangil@kaist.edu}
\thanks{The second named author is supported by Basic Science Research
  Program through the National Research Foundation of Korea (NRF)
  funded by the Ministry of Science, ICT \& Future Planning (2011-0011653).}
\date{\today}
\keywords{hyperbolic group, surface group, diskbusting, polygonality,
  perfect matching polytope}
\begin{abstract}
Gromov asked whether every one-ended word-hyperbolic group contains a hyperbolic surface group.
We prove that every one-ended double of a free group has a
hyperbolic surface subgroup if (1) the free group has rank two, or 
(2) every generator is used the same number of times 
in a minimal automorphic image of the
amalgamating words.
To prove this, we formulate a stronger statement on Whitehead
graphs and prove its specialization by 
combinatorial induction for (1) 
and the characterization of perfect matching polytopes by Edmonds for (2).

\end{abstract}
\maketitle

\section{Introduction}
A \emph{hyperbolic surface group} is the fundamental group of a closed surface with negative Euler characteristic. 
We will denote by $F_n$ the free group of rank $n$ with a fixed basis $\mathcal{A}_n=\{a_1,\ldots,a_n\}$.
A \emph{double of a free group} 
is the fundamental group of a graph of groups where there are two free vertex groups and 
at least one infinite cyclic edge group; here, each edge group is amalgamated along the copies of some word in the free group
(Figure~\ref{fig:double}). 
If $U$ is a list of words in $F_n$,
we denote by $D(U)$ the double of $F_n$ where a cyclic edge group is glued along the copies of each word in $U$.

We study the existence of hyperbolic surface subgroups
in doubles of free groups. This is motivated by the following remarkable question due to Gromov.

\begin{QUE}[{Gromov~\cite[p. 277]{Gromov1993}}]\label{que:gromov}
	Does every one-ended word-hyperbolic group have a hyperbolic surface subgroup?
\end{QUE}

Question~\ref{que:gromov} has been answered affirmatively for the following cases.
\begin{enumerate}
	\item
	Coxeter groups~\cite{GLR2004}.
	\item\label{result:calegari}	Graphs of free groups with infinite cyclic edge groups with nontrivial second rational homology~\cite{Calegari2009}.
	\item
	The fundamental groups of closed hyperbolic $3$-manifolds~\cite{KM2012}.
\end{enumerate}

A basic, but still captivating case is when the group is given as a double of a free group.
Using (2), Gordon and Wilton~\cite{GW2010} constructed explicit families of examples of 
doubles that contain hyperbolic surface groups; they showed that those families virtually have nontrivial second rational homology.
The existence of a hyperbolic surface subgroup is not known for
doubles with trivial virtual second rational homology. This leads us
to the next question.

\begin{QUE}\label{que:double}
Does every one-ended double of a nonabelian free group have a hyperbolic surface subgroup?
\end{QUE}

Our first main result resolves Question~\ref{que:double} for rank-two case:

\begin{THM}\label{thm:main 1}
A double of a rank-two free group is one-ended if and only if it has a hyperbolic surface subgroup.
\end{THM}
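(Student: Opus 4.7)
I handle the two implications separately. For the easy direction (hyperbolic surface subgroup $\Rightarrow$ one-ended), note that $D(U)$ is torsion-free, so Stallings' ends theorem implies that if $D(U)$ is not one-ended then it splits as a nontrivial free product. Combined with the structure of a double over cyclic subgroups---equivalently, the failure of $U$ to be diskbusting---this forces $D(U)$ itself to be a free group; but free groups have cohomological dimension one and so contain no hyperbolic surface subgroup. So the substance of the theorem is the converse.

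For the converse, I would first pass to the combinatorics of the Whitehead graph $\mathrm{Wh}(U)$ of $U$ in $F_2$: the graph on four vertices $a^{\pm 1}, b^{\pm 1}$ recording consecutive letter pairs in the cyclic words of $U$. By the standard criterion for one-endedness of doubles of free groups, $D(U)$ is one-ended precisely when $U$ is diskbusting, which in rank two amounts to $\mathrm{Wh}(U)$ being connected and having no cut vertex. The task then reduces to producing, for each such Whitehead graph, a hyperbolic surface subgroup of $D(U)$. Following the polygonality framework of Calegari and Gordon--Wilton, such a subgroup arises from an essential immersed closed surface of negative Euler characteristic in the presentation $2$-complex of $D(U)$, built by gluing polygons whose boundary words are powers of conjugates of elements of $U$. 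Combinatorially, the existence of such a polygonal surface translates into the existence of a collection of closed walks in $\mathrm{Wh}(U)$ that covers each edge exactly twice and satisfies the appropriate vertex compatibility conditions; this is the content of the ``stronger statement on Whitehead graphs'' promised in the abstract.

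In rank two I would prove this combinatorial statement by induction on the number of edges of $\mathrm{Wh}(U)$. The base cases are small diskbusting Whitehead graphs, such as those arising from a single commutator, where an explicit polygon suffices. The inductive step inspects the local configuration at a chosen vertex of $\mathrm{Wh}(U)$ and either exhibits a polygonal cover directly, or peels off a subgraph---corresponding to a Whitehead-move-type reduction of $U$---to obtain a strictly smaller diskbusting multiword to which the inductive hypothesis applies. Having only four vertices in $\mathrm{Wh}(U)$ is essential to keeping the case analysis finite and tractable.

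The principal obstacle, and the real work of the proof, lies in the bookkeeping of this induction: after each reduction one must verify that the new Whitehead graph remains connected and $2$-vertex-connected (otherwise the inductive hypothesis is lost), and the polygonal cover provided by the induction must be reassembled with the peeled-off piece into a polygonal cover of the original $\mathrm{Wh}(U)$ while preserving the vertex compatibility. The four-vertex restriction of rank two is exactly what makes the enumeration of local reductions feasible; in higher rank this combinatorial approach breaks down, which is why the paper invokes Edmonds' characterization of perfect matching polytopes to establish case (2) of the main theorem by a different route.
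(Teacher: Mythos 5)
Your high-level architecture matches the paper's: pass to the Whitehead graph, characterize one-endedness via diskbusting, reduce the hard direction to a polygonality statement proved for four-vertex graphs by an inductive argument. But there are two substantive problems.

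\textbf{The easy direction is wrong as argued.} You claim that if $D(U)$ is not one-ended, then the splitting forced by Stallings plus the double structure makes $D(U)$ a free group. This is false. Take $U=\{a^2\}$ in $F_2=\form{a,b}$. Then $U$ is not diskbusting, and $D(U) \cong K \ast F_2$ where $K=\form{x,y \mid x^2=y^2}$ is the Klein bottle group, which is not free (it has cohomological dimension two). The correct statement --- and what the paper proves in Lemma~\ref{lem:cyclic} and Proposition~\ref{prop:tilingequiv} --- is that $D(U)$ decomposes as a free product of groups each virtually isomorphic to $\Z\times F_s$ for some $s\ge 0$. One then argues via Kurosh and the fact that $\Z\times F_s$ contains no hyperbolic surface group. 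Your conclusion is right but the intermediate claim that $D(U)$ is free is a genuine error, not a gap in exposition.

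\textbf{The hard direction is materially imprecise where the work actually happens.} You say the existence of the polygonal surface ``translates into the existence of a collection of closed walks in $\mathrm{Wh}(U)$ that covers each edge exactly twice and satisfies the appropriate vertex compatibility conditions.'' That is not the paper's combinatorial condition. Lemma~\ref{lem:equiv} requires a nonempty list of cycles (one of length $\ge 3$) such that, for every vertex $v$ and every pair of edges $e,f\in\delta(v)$, the multiplicity of $\{e,f\}$ among consecutive pairs in the cycles equals the multiplicity of $\{\sigma_v(e),\sigma_v(f)\}$, where $\sigma_v$ is the connecting map encoding the cyclic ordering of letters in the words of $U$. There is no ``cover each edge exactly twice'' requirement, and the connecting maps --- which you never mention --- carry all the actual constraint. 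Without them the statement is not equivalent to polygonality (indeed Remark~\ref{rem:independence} shows polygonality depends on the choice of basis precisely through this extra structure). Your proposed induction, peeling off a subgraph via a ``Whitehead-move-type reduction'' and checking $2$-vertex-connectivity is preserved, also does not match what the proof actually does and does not obviously close: the paper's Lemma~\ref{lem:inductive} removes a single loop-type edge at a vertex of non-minimum degree and reduces the problem to exhibiting a $w$-good \emph{uniform} permutation on $\delta(w)$ at a minimum-degree vertex $w$; that permutation is produced (Lemmas~\ref{lem:oddpath}--\ref{lem:uniform}) by an auxiliary directed graph whose components are odd paths and even cycles, partitioned into eight configuration types. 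None of this combinatorial core is present in your sketch, and it is exactly where the four-vertex hypothesis is used (the auxiliary graph construction relies on $G$ having exactly four vertices). As written your induction has no working engine, and restoring one would essentially require reinventing the odd-path/even-cycle lemma.
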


Let $U$ be a list of words in $F_n$.
When approaching Question~\ref{que:double} for $D(U)$,
we may always assume that  $U$ is \emph{minimal} in the sense that no automorphism of $F_n$ reduces the sum of the lengths of the words in $U$. This is because the isomorphism type of $D(U)$ is invariant under the automorphisms of $F_n$.
By the same reason, we may assume that each word in $U$ is cyclically reduced.
We say $U$ is \emph{$k$-regular} if each generator in $\mathcal{A}_n$ appears exactly $k$ times in $U$. Our second main result answers Question~\ref{que:double} affirmatively for a minimal, $k$-regular list of words.

\begin{THM}\label{thm:main 2}
Suppose $U$ is a minimal, $k$-regular list of cyclically reduced words in $F_n$ where $k,n>1$.
If $D(U)$ is one-ended, then $D(U)$ contains a hyperbolic surface group.
\end{THM}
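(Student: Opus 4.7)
The plan is to translate Theorem~\ref{thm:main 2} into a purely combinatorial statement about the Whitehead graph $Wh(U)$ --- the multigraph on vertex set $\{a_i^{\pm 1} : 1 \le i \le n\}$ with one edge between $x^{-1}$ and $y$ for each cyclic two-letter subword $xy$ occurring in a word of $U$ --- and then settle it via Edmonds' description of the perfect matching polytope. The $k$-regularity hypothesis is precisely the statement that every vertex of $Wh(U)$ has degree $k$. Whitehead's classical lemma shows that the minimality of $U$ together with the one-endedness of $D(U)$ forces $Wh(U)$ to be connected and to have no cut vertex: a cut vertex would supply a length-reducing Whitehead automorphism violating minimality, while a disconnection of $Wh(U)$ would split $D(U)$ as a nontrivial free product and contradict one-endedness.

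The bridge from this combinatorics to a surface subgroup is the notion of polygonality. I would formulate (and prove independently of the $k$-regularity hypothesis) the strengthening that says: if the uniform weight vector $x_e \equiv 1/k$ lies in the perfect matching polytope of $Wh(U)$, then on a finite multiple the edges of $Wh(U)$ partition into perfect matchings, and assembling a family of polygons whose boundary labels are cyclic conjugates of words in $U$ according to that partition yields a closed surface of negative Euler characteristic that is $\pi_1$-injective in $D(U)$. This is the ``stronger statement on Whitehead graphs'' advertised in the abstract, and with it in hand Theorem~\ref{thm:main 2} reduces to the polytope membership of the uniform vector.

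Edmonds' theorem characterises that membership by three families of inequalities: $x_e \ge 0$, the vertex-sums $\sum_{e \ni v} x_e$ equal $1$, and for every odd-cardinality $S \subseteq V(Wh(U))$ the edge boundary satisfies $|\partial S| \ge k$. The first two are immediate from $k$-regularity, so the entire burden of the proof lies in the odd-cut inequality, which is the step I expect to be the main obstacle. The parity identity $|\partial S| \equiv k \cdot |S| \pmod 2$ forced by double-counting degrees inside $S$ pins down the parity of $|\partial S|$, and the absence of a cut vertex supplies an a priori floor of $2$ for $|\partial S|$ whenever $S$ is neither a single vertex nor the complement of one; those two extremal cases give $|\partial S| = k$ at once. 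To lift the bound to $k$ in the intermediate range, I would exploit the involution $x \mapsto x^{-1}$ on $V(Wh(U))$, comparing $\partial S$ with $\partial S^{-1}$, and couple this with minimality (which, via a second appeal to Whitehead's lemma, forbids cuts that are too efficient). Once condition (iii) is established, the polygonality criterion of the second paragraph delivers the hyperbolic surface subgroup and completes the proof.
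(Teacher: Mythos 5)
Your overall strategy --- reduce to a combinatorial property of the Whitehead graph, then apply Edmonds' characterization of the perfect matching polytope via the odd-cut inequality --- is exactly the approach taken in the paper. However, there is a genuine gap at the step where the fractional edge coloring is turned into a surface, plus a smaller issue with the odd-cut argument.

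The main gap is the passage from perfect matchings to a polygonal surface. You write that once the uniform vector $1/k$ lies in the matching polytope, "on a finite multiple the edges partition into perfect matchings, and assembling a family of polygons\dots according to that partition yields a closed surface of negative Euler characteristic that is $\pi_1$-injective." But a perfect matching of $W(U)$ is not the object that polygonality demands. What is needed (Lemma~\ref{lem:equiv}) is a list of \emph{cycles} of $W(U)$ such that, for every pair of edges $e,f$ at a vertex $v$, the number of cycles through $\{e,f\}$ equals the number through $\{\sigma_v(e),\sigma_v(f)\}$; the cycles become the links of vertices of the tiled surface, while the polygons must read \emph{powers} of words in $U$, not merely cyclic conjugates. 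You do not explain how a partition of $k\cdot E(W(U))$ into perfect matchings produces such a list of cycles. The paper's key device (Lemma~\ref{lem:regularlist}) is to take \emph{symmetric differences} of pairs of matchings: given perfect matchings $M_1,\ldots,M_\ell$ in which every edge appears exactly $\ell/k$ times (Seymour's theorem), each $M_i\triangle M_j$ is a disjoint union of cycles, and in the resulting list every edge lies in exactly $\frac{\ell}{k}\bigl(\ell-\frac{\ell}{k}\bigr)$ cycles while every adjacent pair of edges lies in exactly $(\ell/k)^2$ cycles. Since both counts are constant, the $\sigma_v$-invariance required by Lemma~\ref{lem:equiv} holds automatically, with no reference to the connecting maps at all. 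Without some such device, your passage from polytope membership to polygonality is an assertion, not an argument.

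The smaller issue is the odd-cut inequality, which you flag as "the main obstacle" and plan to attack via parity, a comparison of $\partial S$ with $\partial S^{-1}$, and a second appeal to Whitehead's lemma. The direct route (Lemma~\ref{lem:kgraph}) is one line: minimality is equivalent, via Whitehead/Berge and Menger, to $\lambda(v,v^{-1})=\deg(v)=k$ for every vertex $v$; if $|S|$ is odd and $\mu$ is fixed-point-free, some $x\in S$ has $\mu(x)\notin S$, and the $k$ edge-disjoint $x$--$\mu(x)$ paths force $|\delta(S)|\ge k$. Note in particular that "connected with no cut vertex" is strictly weaker than minimality and is \emph{not} sufficient here: Example~\ref{ex:connectivity} exhibits a $2$-connected Whitehead graph for which the combinatorial conclusion fails, so any proof must invoke the full $\lambda(v,v^{-1})=\deg(v)$ characterization rather than $2$-connectedness alone.
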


Here is an overview of our proof.  
We first explain why Tiling Conjecture~\cite{KW2012,Kim2009} implies
an affirmative answer for Question~\ref{que:double} in Section~\ref{sec:polygonal}.
And then we reformulate Tiling Conjecture into a purely graph theoretic conjecture in Section~\ref{sec:whitehead}.
We resolve this graph theoretic conjecture in two special cases. 
In Section~\ref{sec:regular}, we prove it for regular graphs and
deduce Theorem~\ref{thm:main 2}. Here we use the characterization of
perfect matching polytopes of graphs by Edmonds~\cite{Edmonds1965a}.
In Section~\ref{sec:4vertex}, we prove it for 4-vertex graphs and
deduce Theorem~\ref{thm:main 1}. 

\subsection*{Acknowledgement.} 
Authors would like to thank Daniel Kr\'al' for pointing out Example~\ref{ex:connectivity}.
They thank the anonymous referee for numerous helpful suggestions.

\section{Polygonality and Tiling Conjecture}\label{sec:polygonal}
In this section, we give a review on polygonality and its implication.

\subsection{Doubles and polygonality}\label{subsec:polygonality}
Let $n\ge1$. Each word in $F_n$ can be written as $w=x_1 x_2\cdots x_\ell$ where $x_i\in \mathcal{A}_n\cup \mathcal{A}_n^{-1}$. We will often take the subscript of $x_i$ modulo $\ell$. 
If $w$ is reduced, we call $\ell$ as the \emph{length} of $w$ and write $\abs{w} = \ell$.
We denote the Cayley graph of $F_n$ by $\cay(F_n)$.
There is a natural free action of $F_n$ on $\cay(F_n)$ so that $\cay(F_n)/F_n$ is a bouquet of circles. 
Let $\alpha_1,\ldots,\alpha_n$ denote the oriented circles in $\cay(F_n)/F_n$ corresponding to $a_1,\ldots,a_n\subseteq \mathcal{A}_n$.
The loop obtained by a concatenation 
$\alpha_i^p\alpha_j^q\cdots \alpha_k^r$ where $p,q,\ldots,r\in \Z$ is said to 
\emph{read} the word $a_i^p a_j^q\cdots a_k^r$.

Let $U=(u_1,u_2,\ldots,u_r)$ be a list of nontrivial words in $F_n$.
Take two copies $\Gamma$ and $\Gamma'$ of $\cay(F_n)/F_n$.
To $\Gamma$ and $\Gamma'$, we glue a cylinder along the copies of the closed curve reading $u_i$, for each $i$.
Let $X(U)$ be the resulting space and let $D(U)=\pi_1(X(U))$ be the fundamental group of $X(U)$; see Figure~\ref{fig:double}.
In the literature, $D(U)$ is called a \emph{double of $F_n$ along $U$}, or simply a \emph{double}~\cite{BFMT2009}.
If we let $\mathcal{B}_n$ and $V=\{v_1,\ldots,v_r\}$ denote the copies of $\mathcal{A}_n$ and $U$ respectively, then a presentation of $D(U)$ is given as:
\[D(U) \cong \form{\mathcal{A}_n, \mathcal{B}_n,t_2,t_3,\ldots,t_r 
\mid u_1=v_1, u_i^{t_i}=v_i\mbox{ for }i=2,\ldots,r}.\]
Since the isomorphism type of $D(U)$ does not change if some words in $U$ are replaced by their conjugates, we will always assume that every word in $U$ is cyclically reduced. Note that $U$ is possibly redundant.

\begin{figure}[tb]
\includegraphics[width=.6\textheight]{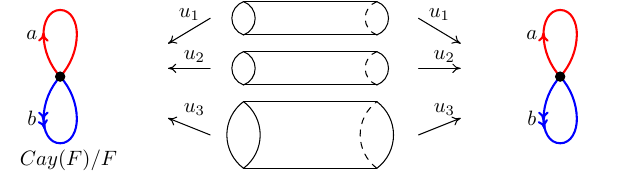}
\caption{A construction of $X(U)$, where $\pi_1(X(U))=D(U)$.
}
\label{fig:double}\end{figure}

For a word $w = x_1x_2\cdots x_\ell\in F_n$, 
the  \emph{length-two cyclic subwords} of $w$
will refer to the following words:
\[x_1x_2,x_2x_3,\ldots,x_{\ell-1}x_\ell,x_\ell x_1.\]

The \emph{Whitehead graph $W(U)$ for $U$} is constructed as follows~\cite{Whitehead1936}:
\begin{enumerate}[(i)]
\item
the vertex set of $W(U)$ is $\mathcal{A}_n\cup\mathcal{A}_n^{-1}$;
\item
for each length-two cyclic subword $xy$ of a word in $U$, 
we add an edge joining $x$ and $y^{-1}$ to $W(U)$.
\end{enumerate}
We will later give an equivalent, alternative formulation of a Whitehead graph as an abstract graph with additional structures; see Lemma~\ref{lem:whitehead}.

A \emph{polygonal disk} means a topological $2$-disk $P$ equipped with a graph structure on the boundary $\partial P\approx S^1$.
We let $Z(U)$ denote the presentation $2$-complex of $F_n/\fform{U}$. This means that $Z(U)$ is obtained from its 1-skeleton $\cay(F_n)/F_n$ by attaching a polygonal disk $D_i$ along the loop reading $u_i$ for each $i=1,2,\ldots,r$. Here, $\partial D_i$ is regarded as a $\abs{u_i}$-gon.
As before, we let $\alpha_j$ denote the oriented loop in $Z(U)^{(1)}=\cay(F_n)/F_n$ reading $a_j$.
The link of the unique vertex in $Z(U)$ is the Whitehead graph for $U$ after identifying the incoming (outgoing, respectively) portion of $\alpha_j$ with the vertex $a_j$ ($a_j^{-1}$, respectively) in $W(U)$.

Let us fix a point $d_i$ in the interior of $D_i$ and triangulate $D_i$ so that each triangle contains $d_i$ and one edge of $\partial D_i$.
Remove a small open neighborhood of $d_i$ for each $i$ to get a square complex $Z'$; see Figure~\ref{fig:zprime}(a). 
We obtain a square complex structure on $X(U)$ by taking two copies of $Z'$ and gluing the circles corresponding to the boundary of the neighborhood of each $d_i$.
The unique vertex of $Z(U)$ gives two special vertices of $X(U)$.
Note that the link of each special vertex is the barycentric subdivision $W(U)'$ of $W(U)$.
Since $W(U)$ has no loops, we see $W(U)'$ is a bipartite graph without parallel edges.
We remark that $X(U)$ is a non-positively curved square complex by Gromov's link condition~\cite{Gromov1987}.

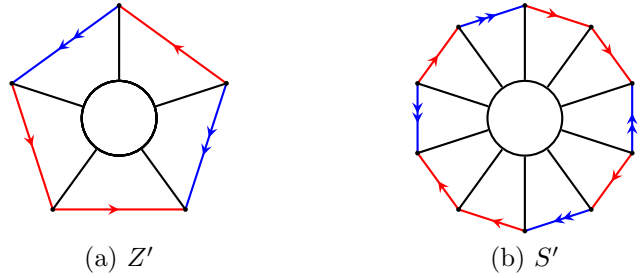
\begin{figure}[t]
  \tikzstyle {a}=[red,postaction=decorate,decoration={%
    markings,%
    mark=at position .5 with {\arrow[red]{stealth};}}]
  \tikzstyle {b}=[blue,postaction=decorate,decoration={%
    markings,%
    mark=at position .43 with {\arrow[blue]{stealth};},%
    mark=at position .57 with {\arrow[blue]{stealth};}}]
  \tikzstyle {v}=[draw,circle,fill=black,inner sep=0pt]
 \subfloat[(a) $Z'$]{\begin{tikzpicture}[thick]
  \foreach \i in {1,2,4}
    \draw [a] (360/5*\i+90:1.5)--(360/5*\i+360/5+90:1.5);
   \draw [b] (90:1.5)--(90+360/5:1.5);
   \draw [b] (360/5*4+90:1.5)--(90+360/5*3:1.5);
   \foreach \i in {0,...,4} {
   \node [circle,draw,inner sep=10pt] at (0,0) (c) {};
  \draw (c)-- (360/5*\i+90:1.5) node [v] {} ;
  }
 \node  [inner sep=0.9pt] at (-90:1.5) {};  
 \end{tikzpicture}}\quad\quad\quad\quad\quad\quad
\subfloat[(b) $S'$]{\begin{tikzpicture}[thick]
   \foreach \i in {0,2,4,5,7,9}
    \draw [a] (360/10*\i+90:1.5)--(360/10*\i-360/10+90:1.5);
    \foreach \i in {1,6}
    \draw [b] (360/10*\i+90:1.5)--(360/10*\i-360/10+90:1.5);
    \foreach \i in {2,7}
    \draw [b] (360/10*\i+90:1.5)--(360/10*\i+360/10+90:1.5);
   \node [circle,draw,inner sep=10pt] at (0,0) (c) {};
  \foreach \i in {0,...,9} {
  \draw (c)-- (360/10*\i+90:1.5) node [v] {} ;
   }
\end{tikzpicture}}%
\caption{Square complex structures on $Z'$ and on $S'$. 
A single and a double arrow denote the generators $a$ and $b$, respectively.
Figure (a) shows a punctured $D_i$ in $Z'$, divided into squares.
Figure (b) is a punctured $P_i$ in $S'$, where $\partial P_i\to\cay(F)/F$ reads $(b^{-1}aba^2)^2$.
}
\label{fig:zprime}
\end{figure}

A \emph{side-pairing} on polygonal disks $P_1,\ldots,P_m$ is an
equivalence relation on the sides of $P_1,\ldots,P_m$ such that each equivalence class consists of two sides, along with a choice of a homeomorphism between the two sides of each equivalence class. 
For a given side-pairing $\sim$ on polygonal disks $P_1,\ldots,P_m$, one gets a closed surface $S=\coprod_i P_i/\osim$ by identifying the sides of $P_i$ by $\sim$. The surface $S$ is naturally equipped with a two-dimensional CW-structure.
A graph map $\phi\co G\to\cay(F_n)/F_n$ induces an orientation and a label by $\mathcal{A}_n$ on each edge $e$ of $G$, so that the oriented loop $\phi(e)$ reads the label of $e$.
An edge labeled by $a_i$ is called an \emph{$a_i$-edge}.
An \emph{immersion} will mean a locally injective graph map.

\begin{DEFN}[\cite{KW2012,Kim2009}]\label{defn:polygonal}
A list $U$ of cyclically reduced words in $F_n$ is called \emph{polygonal} if there exist a side-pairing $\sim$ on some polygonal disks $P_1,P_2,\ldots,P_m$
and an immersion $S^{(1)}\to \cay(F_n)/F_n$ where $S=\coprod_i P_i/\osim$ such that the following hold:
\begin{enumerate}[(i)]
	\item the composition $\partial P_i\to S^{(1)}\to\cay(F_n)/F_n$ reads a nontrivial power of a word in $U$ for each $i$;
	\item the Euler characteristic $\chi(S)$ of $S$ is less than $m$.
\end{enumerate}
In this case, we call $S$ a \emph{$U$-polygonal surface}.
\end{DEFN}

\begin{REM}\label{rem:independence}
Polygonality of a list of words depends on the choice of a free-basis. An example given in~\cite{KW2012} is the word $w=abab^2ab^3$ in $F_2=\form{a,b}$. It was shown that while $w$ is not polygonal, the automorphism $(a\mapsto ab^{-2},b\mapsto b)$ maps $w$ to a polygonal word $ab^{-1}a^2b$.
\end{REM}

Polygonality was originally defined 
for a word~\cite{KW2012} and then for a set of words~\cite{Kim2009}.
Here, we generalize to a (possibly redundant) list of words. 
It is immediate that the main implication of polygonality still holds, as described below.

\begin{THM}[\cite{KW2012,Kim2009}]\label{thm:polygonal}
If $U$ is a polygonal list of words in $F_n$, then $D(U)$ contains a hyperbolic surface group.
\end{THM}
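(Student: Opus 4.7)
The plan is to build a closed surface $Y$ with $\chi(Y)<0$ and a locally isometric cubical map $f\co Y\to X(U)$; Proposition~\ref{prop:pi1inj} then yields the desired injection $\pi_1(Y)\hookrightarrow \pi_1(X(U))=D(U)$, exhibiting $\pi_1(Y)$ as a hyperbolic surface subgroup of $D(U)$.

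Starting from a $U$-polygonal surface $S=\coprod_i P_i/\sim$, I would subdivide each $P_i$ by the same recipe that $Z'$ uses on each $D_i$: pick an interior point $p_i$, fan-triangulate from $p_i$, and remove a small open neighborhood $N_i$ of $p_i$. The quotient $S'=\coprod_i P_i^*/\sim$ is a square complex whose boundary consists of $m$ circles $\partial N_i$, and the immersion $S^{(1)}\to\cay(F_n)/F_n$ extends naturally to a cubical map $S'\to Z'$ (sending each radial edge to the corresponding radial edge and each $\partial N_i$ as a $k_i$-fold cover onto the inner circle in $Z'$). Taking two copies of $S'$ and gluing them along the $\partial N_i$'s yields $Y$ together with a cubical map $f\co Y\to X(U)$. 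Topologically $Y$ doubles the punctured surface $S'$, so
\[
\chi(Y)=2\chi(S')=2(\chi(S)-m)<0
\]
by the polygonality hypothesis, and hence $\pi_1(Y)$ is a hyperbolic surface group.

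It remains to verify that $f$ is a local isometric embedding via Proposition~\ref{prop:localisom}. The vertices of $Y$ split into two types. At each vertex on a glued boundary circle $\partial N_i$, both links are $4$-cycles consisting of two circle edges joined by two radial edges (one from each copy of $S'$), and $f$ restricts to an isomorphism on links. At each \emph{central} vertex $v$ of $Y$, the link embeds into $W(U)'$: the original-type vertices that appear correspond to half-edges of $S^{(1)}$ at $v$, and the midpoint-type vertices correspond to corners of the $P_i$'s at $v$. Injectivity on original-type vertices is exactly the immersion property of $S^{(1)}\to\cay(F_n)/F_n$; injectivity on midpoint-type vertices holds because each corner realizes a distinct occurrence of a length-$2$ cyclic subword, giving a distinct edge of the multigraph $W(U)$ and hence a distinct midpoint in $W(U)'$. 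Fullness follows because whenever a half-edge of label $a$ and a midpoint for a subword $ab$ both appear at $v$, the corner realizing that subword must involve the unique $a$-labeled half-edge (again by immersion), so the requisite link edge is present in $\link_Y(v)$.

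The main obstacle is this link-condition verification at central vertices, which rests on the observation that distinct corners of the $P_i$'s correspond to distinct edges of the Whitehead multigraph $W(U)$; it is this fact that allows the immersion property of $S^{(1)}$ to lift to a local isometric embedding of $Y$ into $X(U)$, from which the theorem follows by Proposition~\ref{prop:pi1inj}.
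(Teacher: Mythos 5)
Your proposal follows the same construction and strategy as the paper's proof: fan-triangulate and truncate the cells of $S$ to obtain the square complex $S'$, double along the truncation circles to get a closed surface, compute $\chi=2\bigl(\chi(S)-m\bigr)<0$, and invoke Propositions~\ref{prop:pi1inj} and~\ref{prop:localisom}. The only divergence is in how the link conditions of Proposition~\ref{prop:localisom} are verified. The paper observes that $\link(\phi'';v)$ sends $\link_{S''}(v)\cong S^1$ onto a cycle of $W(U)'$ and that any cycle in the barycentric subdivision $W(U)'$ (a bipartite graph in which every midpoint vertex has degree two) is automatically a full subcomplex, so fullness comes for free once injectivity is known; you instead check injectivity and fullness by hand, splitting the vertices of $\link_{S''}(v)$ into ``original'' and ``midpoint'' type. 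Your hands-on fullness argument is correct in substance but less economical than the paper's one-line observation. The one point where both treatments are terse is precisely the injectivity on the midpoint-type vertices: the paper asserts that $\phi$ is ``locally injective away from $p_1,\dots,p_m$,'' and you assert that ``each corner realizes a distinct occurrence of a length-$2$ cyclic subword,'' but neither spells out why two corners of $S$ meeting at a common vertex $v$ cannot map to the same edge of $W(U)$ (the scenario to rule out is a degree-two vertex of $S^{(1)}$ both of whose corners lie in cells wrapping multiply around a single $D_j$ and land on the same corner of $D_j$). This is a shared elision inherited from the paper rather than a new gap in your write-up, but it is worth a sentence of justification. Otherwise the two proofs are essentially identical.
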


We remark that similar (but not identical) ideas of exploring surface subgroups in a given group by gluing polygonal disks can also be found in~\cite{Schupp1980,Culler1981,Calegari2009,BCF2011}.

\subsection{Tiling Conjecture and its implication}\label{subsec:split}

 A list  $U$ of words in $F_n$ is said to be \emph{diskbusting} if one cannot write $F_n=A\ast B$ in such a way that $A,B\ne\{1\}$ and each word in $U$ is conjugate into $A$ or $B$~\cite{Canary1993,Stong1997,Stallings1999}. We note that $D(U)$ is one-ended if and only if $U$ is diskbusting~\cite{GW2010}.

\begin{CON}[Tiling Conjecture; see \cite{KW2012,Kim2009}]\label{con:tiling}
 A minimal and diskbusting list of cyclically reduced words in $F_n$ is polygonal when $n>1$.
\end{CON}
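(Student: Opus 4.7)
The plan is to construct, from a minimal diskbusting list $U$ in $F_n$, a closed surface $S$ tiled by polygonal disks $P_1,\ldots,P_m$ whose boundaries read nontrivial powers of words in $U$ and which admits an immersion $S^{(1)}\to\cay(F_n)/F_n$, and then to verify $\chi(S)<m$. Since $S$ has exactly $m$ two-cells, the inequality $\chi(S)<m$ is equivalent to $V(S)<E(S)$, i.e.\ the average vertex of $S$ has valence strictly greater than $2$. Thus the construction problem reduces to a purely combinatorial one: choose exponents $k_1,\ldots,k_m$ for the boundary powers and a side-pairing so that the resulting closed surface has strictly more edges than vertices.

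My first step would be to extract structure on the Whitehead graph $W(U)$ from the hypotheses. By Whitehead's classical algorithm for shortening words in $F_n$ under $\aut(F_n)$, a minimal and diskbusting $U$ forces $W(U)$ to be connected and to have no cut vertex; this $2$-connectedness is the combinatorial input I would leverage throughout. The second step is to recast polygonality graph-theoretically, anticipating Section~\ref{sec:whitehead}: the immersion condition forces the link at each interior vertex of $S$ to embed as a cycle in $W(U)$, while each boundary edge of every $P_i$ corresponds to a length-$2$ cyclic subword of $u_i^{k_i}$, hence to a prescribed edge of $W(U)$. Building $S$ therefore amounts to assigning a non-negative multiplicity to each edge of $W(U)$ (subject to matching the supplies dictated by the $k_i|u_i|$ boundary edges of the $P_i$) and partitioning the resulting edge-multiset into cycles of $W(U)$, so that the number of cycles (which equals $V(S)$) is strictly less than the total number of edges used (which equals $E(S)$).

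The third and decisive step is to produce such a cycle partition from $2$-connectedness of $W(U)$ alone. A natural plan is to combine an ear-decomposition-style construction on $W(U)$ with convex-geometric tools on the cycle space: locate an element of the cycle cone of $W(U)$ whose support meets the prescribed edge supply and whose decomposition into extreme cycles uses few pieces. Candidate machinery includes Edmonds' characterization of the perfect matching polytope (applied after passing to the bipartite subdivision $W(U)'$) and $T$-join theory. The main obstacle is precisely this cycle-counting inequality: $2$-connectedness does not obviously preclude graphs in which every admissible cover is forced into many short cycles, so I do not expect a short general argument. I would therefore first test the strategy in structured regimes---regular $W(U)$, where Edmonds' theorem directly supplies long cycles, and small $W(U)$, where finite case analysis is feasible---which is exactly the route taken in Sections~\ref{sec:regular} and~\ref{sec:4vertex}, and hope that the mechanism visible in these cases points to a uniform graph-theoretic principle.
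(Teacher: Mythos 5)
The Tiling Conjecture is not proved in this paper; it is stated as a conjecture and established only in the regular and four-vertex cases (Theorems~\ref{thm:regular} and~\ref{thm:4vertex}). Your proposal is accordingly a strategy sketch rather than a proof, and at the level of broad strategy it does track the paper: reduce polygonality to a combinatorial statement about cycle families in the Whitehead graph, invoke Edmonds' perfect matching polytope for regular $W(U)$, and do a finite case analysis for small $W(U)$. However, two of your foundational steps contain errors that would sink any attempt built on them.

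First, you take $2$-connectedness of $W(U)$ as the combinatorial content of minimality and diskbusting, and propose to leverage it throughout. This is the wrong hypothesis. Minimality is equivalent, via Berge's criterion and Menger's theorem, to the local edge-connectivity condition $\lambda_{W(U)}(v,v^{-1})=\deg(v)$ for every vertex $v$ (Proposition~\ref{prop:whitehead}); diskbusting adds connectedness. Two-connectedness is a strictly weaker consequence, not an equivalent condition, and the paper exhibits a $2$-connected Whitehead graph (Example~\ref{ex:connectivity}, due to Kr\'al') on which the reformulated conjecture fails precisely because $\lambda(a,a^{-1})=3<4=\deg(a)$. Any argument taking $2$-connectedness alone as input cannot succeed. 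Second, your graph-theoretic reformulation of polygonality---choose edge multiplicities on $W(U)$, decompose into cycles, and arrange more edges than cycles---omits the corner-level compatibility with the connecting maps $\sigma_v$. The actual condition (Lemma~\ref{lem:equiv}) is that for every vertex $v$ and every pair of edges $e,f\in\delta(v)$, the number of cycles in the family containing both $e$ and $f$ must equal the number containing both $\sigma_v(e)$ and $\sigma_v(f)$; this couples the cycle family to the word combinatorics through the $\sigma_v$'s, not merely to the edge set of $W(U)$. Dropping that constraint makes the combinatorial problem look substantially easier than it is, and essentially all the technical work in Sections~4 and~5 (uniform perfect-matching covers for regular graphs; $w$-good uniform permutations and the odd-path/even-cycle lemma for four-vertex graphs) is there precisely to enforce this $\sigma_v$-equivariance.
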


If Tiling Conjecture is true, then one would be able to precisely describe when doubles contain hyperbolic surface groups as follows; in particular, Question~\ref{que:double} would have an affirmative answer. 

\begin{PROP}\label{prop:tilingequiv}
Let $n>1$.  
Suppose that every minimal and diskbusting list of cyclically reduced words in $F_m$ is polygonal for all $m=2,3,\ldots,n$.
Then for a list $U$ of cyclically reduced words in $F_n$, the double $D(U)$ contains a hyperbolic surface group if and only if $F_n$ cannot be written as 
$F_n=G_1\ast G_2\ast\cdots G_n$ in such a way that 
each $G_i$ is infinite cyclic and each word in $U$ is conjugate into one of 
$G_1,\ldots,G_n$.
\end{PROP}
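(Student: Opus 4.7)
The plan is to prove both directions using the decomposition of $D(U)$ as a graph of groups and induction on $n$.

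For the \emph{backward direction} (no such cyclic decomposition $\Rightarrow$ $D(U)$ has a hyperbolic surface subgroup), I would first minimize $U$ via an automorphism of $F_n$, noting that both $D(U)$ and the non-existence of a cyclic decomposition are $\mathrm{Aut}(F_n)$-invariant. If the minimized $U$ is diskbusting, the hypothesis at rank $n$ together with Theorem~\ref{thm:polygonal} produces the desired surface subgroup. Otherwise $F_n = A \ast B$ with each word of $U$ conjugate into $A$ or $B$, giving $U = U_A \sqcup U_B$. A short case check shows that, after swapping roles, $\mathrm{rank}(A) \geq 2$ and $U_A$ admits no cyclic decomposition in $A$; otherwise, combining cyclic decompositions of $U_A$ and $U_B$ in $A$ and $B$ would yield one of $U$ in $F_n$. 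By the inductive hypothesis, $D_A(U_A)$ has a hyperbolic surface subgroup, which embeds into $D(U)$ as the fundamental group of the sub-graph-of-groups on vertex groups $A \subset F_n$, $A' \subset F_n'$ with edges indexed by $U_A$.

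For the \emph{forward direction} (cyclic decomposition exists $\Rightarrow$ no hyperbolic surface subgroup), I would split $D(U)$ as a free product using the decomposition. After an automorphism, $G_j = \langle a_j\rangle$ and, up to conjugation of words, $u_i = a_{j_i}^{k_i}$. Each defining relation of $D(U)$ then involves only $a_{j_i}$, $a_{j_i}'$, and (for $i \geq 2$) the stable letter $t_i$, so the presentation partitions into $n$ independent blocks and yields $D(U) = S_1 \ast \cdots \ast S_n$. Each $S_j$ is the fundamental group of a graph of groups with two infinite cyclic vertex groups $\langle a_j \rangle, \langle a_j'\rangle$ and one cyclic edge group per $u_i$ with $j_i = j$ (possibly freely multiplied by $\mathbb{Z}$ when $j \neq j_1$), i.e., a generalized Baumslag--Solitar group up to a free factor. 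I would then invoke the fact that no such group contains a hyperbolic surface subgroup: a hyperbolic surface group acting on the Bass--Serre tree with cyclic vertex stabilizers cannot fix a vertex (it is non-cyclic), and its minimal-subtree action induces a splitting of the surface along essential simple closed curves with cyclic vertex groups, forcing all pieces to be annuli and thus $\chi = 0$, a contradiction. Since hyperbolic surface groups are one-ended, any such subgroup of $D(U)$ is conjugate into some $S_j$, which is impossible.

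I expect the forward direction to be the main obstacle, specifically the step that no generalized Baumslag--Solitar group contains a hyperbolic surface subgroup; although classical, it depends on the identification of cyclic splittings of surface groups with essential simple closed curve decompositions. The backward direction is largely bookkeeping, relying on the standard $\pi_1$-injectivity of sub-graph-of-groups inclusions and the passage of minimality of $U$ to its restrictions in the free factors.
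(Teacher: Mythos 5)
Your proof is correct, and the backward direction (no cyclic decomposition implies a surface subgroup) matches the paper's strategy in spirit: isolate a free factor of rank at least two in which the restricted sublist is diskbusting, invoke the polygonality hypothesis there, and push the resulting surface subgroup of the sub-double into $D(U)$. The paper organizes this without induction --- it takes, once and for all, a \emph{maximal} free-product decomposition $F_n = G_1\ast\cdots\ast G_k$ compatible with $U$, so that maximality of $k$ immediately forces the restricted list in any non-cyclic factor $G_1$ to be diskbusting --- whereas you unwind the same idea as an induction on rank, which costs you the extra bookkeeping about which factor ($A$ or $B$) carries the obstruction. Both are fine; you should however note (as the paper does) that the restricted sublist must be further minimized by an automorphism of the free factor before the polygonality hypothesis applies, since it is stated for \emph{minimal} diskbusting lists.

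Where you genuinely diverge from the paper is the forward direction (cyclic decomposition exists implies no surface subgroup). You split $D(U)$ as a free product of doubles of $\mathbb{Z}$ (generalized Baumslag--Solitar groups, possibly with free factors) and then rule out hyperbolic surface subgroups of a GBS group by a Bass--Serre argument: any such subgroup would inherit a cyclic splitting with cyclic vertex groups, and the classical correspondence between cyclic splittings of surface groups and simple closed curve systems forces the pieces to be annuli, contradicting $\chi<0$. The paper instead proves Lemma~\ref{lem:cyclic}: each double of $\mathbb{Z}$ is \emph{virtually} $\mathbb{Z}\times F_s$, hence every non-free subgroup of it has a finite-index subgroup with nontrivial center, which a hyperbolic surface group cannot have. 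The paper's route is more elementary and self-contained --- it avoids the (true but nontrivial) theorem that cyclic splittings of closed surface groups come from curve systems, and replaces the Bass--Serre analysis with a short covering-space construction. Your route has the mild advantage of making explicit the GBS structure of the pieces, but at the price of importing a deeper surface-topology input. Either way the proposition follows; you may want to either cite a reference for the surface-group splitting fact or substitute the $\mathbb{Z}\times F_s$ argument, which closes the gap at lower cost.
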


\begin{proof}
The forward implication is immediate from that every double of $\Z$ is virtually isomorphic to $\Z\times F_s$ for some $s\ge0$.

For the backward implication, choose the maximum $k$ such that $F_n=G_1\ast\cdots\ast G_k$ for some nontrivial groups $G_1,\ldots,G_k$ and each word in $U$ is conjugate into one of the $G_1,\ldots,G_k$. Assume $k < n$.
We may assume that $G_1$ has rank $m > 1$.
Let $U_1$ be the list of all the words in $U$ conjugate into $G_1$.
Then suitably chosen conjugates of the words in $U_1$ form a diskbusting list $U_1'$ in the rank-$m$ free group $G_1$.
We note that $D(U_1') \le D(U_1'\cup (U\setminus U_1)) \cong D(U)$.
From the hypothesis, a free basis $\mathcal{B}$ of $G_1$ can be chosen so that  $U_1'$ is polygonal as a list of words written in $\mathcal{B}$.
By Theorem~\ref{thm:polygonal}, $D(U_1')$ contains a hyperbolic surface group.
\end{proof}

\section{Combinatorial Formulation of Tiling Conjecture}\label{sec:whitehead}
\subsection{Terminology on graphs}
We allow graphs to have parallel edges or loops;
a \emph{loop} is an edge with only one endpoint.
For a graph $G$, we write $V(G)$ and $E(G)$ to denote the vertex set and the edge set of $G$,
respectively. The \emph{degree} $\deg_G(v)$ of a vertex $v$ is the number of
edges incident with $v$, assuming that loops are counted twice.
For a set $X$ of vertices, we write $\delta_G(X)$ to denote the set of edges having endpoints in both $X$ and $V(G)\setminus X$.
In particular, the set $\delta_G(v)$ consists of non-loop edges incident with $v$. 
A graph is \emph{$k$-regular} if every vertex has degree $k$,
and it is \emph{regular} if it is $k$-regular for some $k\ge0$.
A \emph{cycle} is a (finite) $2$-regular connected graph. 
For two distinct vertices $x$ and $y$ of a graph $G$, 
the \emph{local edge-connectivity} $\lambda_G(x,y)$ is the maximum number of
pairwise edge-disjoint paths from $x$ to $y$ in $G$. We omit the
subscript $G$ in $\deg_G$, $\delta_G$ and $\lambda_G$ if the underlying graph $G$ is clear from the context. 
Menger's theorem~ states that $\lambda(x,y)=\min\{ \abs{\delta(X)} \co
x\in X, y\not\in X\}$; see~\cite[Chapter 3]{Diestel_book}
or~\cite{Menger1927}.

A \emph{pairing} of a graph $G=(V,E)$ is a fixed-point-free involution on $V$.
A graph $G$ with a pairing $v\mapsto v^{-1}$ is called \emph{pairwise well-connected}
if $\lambda(v,v^{-1})=\deg(v)$ for each vertex $v$ of $G$.
Whitehead graphs are always equipped with a canonical pairing
$v\mapsto v^{-1}$.

\subsection{Whitehead graph and associated connecting maps}\label{subsec:connecting}
Let $U$ be a list of cyclically reduced words in $F_n$.
The following characterization of a minimal set of words is given
in~\cite[Section 8]{Berge1990}: the list $U$ is not minimal if and
only if for some $i$, there exists a set $C$ of edges in the Whitehead
graph $W(U)$ such that $\abs{C}<\deg(a_i)$ and $W(U)\setminus
C$ has no path from $a_i$ to $a_i^{-1}$.
If there is $C\subseteq E(W(U))$ such that $W(U)\setminus C$ has no path
from $a_i$ to $a_i^{-1}$, 
then there is a set $X\subseteq V(W(U))$ such that $\delta(X)\subseteq C$
and 
$a_i\in X$, $a_i^{-1}\notin X$.
By Menger's theorem,
it follows that $U\subseteq F_n$ is minimal if and only if
$W(U)$ is pairwise well-connected.
A minimal set $U\subseteq F_n$ is diskbusting if and only if $W(U)$ is connected~\cite{Whitehead1936,Stong1997,Stallings1999}.
These results on sets of words immediately generalize to lists of words:

\begin{PROP}[\cite{Whitehead1936,Berge1990,Stong1997,Stallings1999}]\label{prop:whitehead}
The list $U$ is minimal and diskbusting if and only if $W(U)$ is connected and 
pairwise well-connected.
\end{PROP}

The Whitehead graph $W(U)$ has an \emph{associated involution} on $\mathcal A_n\cup \mathcal A_n^{-1}$ defined by $v\mapsto v^{-1}$ for all $v\in A_n\cup \mathcal A_n^{-1}$.
Let  $w=x_1x_2\cdots x_\ell$ be a word in $U$,
and $p$ and $q$ be the edges of $W(U)$ corresponding to $x_{i-1}x_{i}$ and $x_{i}x_{i+1}$, respectively.
Then we write $p^{x_i} = q$ and $q^{x_i^{-1}}=p$. Note that for each vertex $v$, the map $e\mapsto e^v$ defines a bijection from $\delta(v^{-1})$ to $\delta(v)$. This bijection is called the \emph{associated connecting map at $v$}.
It is immediate that $(e^{v})^{v^{-1}} = e$ for each vertex $v$ and each edge $e$ in $\delta(v^{-1})$.
We will always denote the associated involution and the associated connecting maps of a Whitehead graph as $v\mapsto v^{-1}$ and $e\mapsto e^v$.

If $(e^x)^y$ is well-defined for an edge $e$ and vertices $x\ne y^{-1}$ of $W(U)$, then there exists a word $w$ in $U$ and $s,t\in V(W(U))$ such that $e, e^x$ and $(e^x)^y$ correspond to some length-two cyclic subwords of $w$ or $w^{-1}$ as shown below:
\[w\text{ or }w^{-1}=\cdots\lefteqn{\overbrace{\phantom{s\cdot x}}^e}s\cdot\lefteqn{\underbrace{\phantom{x\cdot y}}_{e^x}}x\cdot \overbrace{y\cdot t}^{(e^x)^y}\cdots.\]
The proof of the following observation is then elementary.
\begin{LEM}\label{lem:reading}
In $W(U)$, consider an edge $f$ and vertices $x_1,x_2,\ldots,x_\ell$ where $\ell>0$, such that $x_{i+1}\ne x_i^{-1}$ for $i=1,\ldots,\ell$.
Suppose that 
\[(\cdots((f^{x_1})^{x_2})\cdots)^{x_\ell}\]
is well-defined and equal to $f$.
Then $x_1 x_2 \cdots x_\ell $ is a nontrivial power of a cyclic conjugation of a word in $U$.\qed
\end{LEM}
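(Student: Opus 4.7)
The plan is to unwind the definition of the connecting map so that each application of $\sigma_{x_j^{-1}}$ is identified with the transition between two consecutive length-$2$ cyclic-subword occurrences of a single word $w$ (where $w$ is a word in $U$, or the inverse of one). The no-backtrack hypothesis $x_{j+1}\ne x_j^{-1}$ will force the sequence of transitions to move in one fixed cyclic direction along $w$, so that $f_l=f_0$ records having traversed $w$ cyclically a positive integer number of times.

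Concretely, I would fix $w=w_1w_2\cdots w_m$ (indices read modulo $m$) and a position $p$ so that $f_0$ is the edge created by the subword $w_pw_{p+1}$; its endpoints in $W(U)$ are then $w_p$ and $w_{p+1}^{-1}$. Since $\sigma_{x_1^{-1}}(f_0)$ is defined, $x_1^{-1}$ must be one of these two endpoints. Applying the formula of Section~\ref{subsec:connecting} when $x_1^{-1}=w_{p+1}^{-1}$, and its inverse $\sigma_v=\sigma_{v^{-1}}^{-1}$ when $x_1^{-1}=w_p$ (a symmetric case equivalent to swapping $w$ with $w^{-1}$), the edge $f_1:=\sigma_{x_1^{-1}}(f_0)$ is created by the adjacent subword $w_{p+1}w_{p+2}$. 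So after possibly inverting $w$, I may assume the first step is \emph{forward}. The induction step is then routine: if $f_{j-1}$ is created by $w_{p+j-1}w_{p+j}$ with $x_{j-1}=w_{p+j-1}$, then well-definedness of $\sigma_{x_j^{-1}}(f_{j-1})$ forces $x_j^{-1}\in\{w_{p+j-1},w_{p+j}^{-1}\}$; the first option would give $x_j=w_{p+j-1}^{-1}=x_{j-1}^{-1}$, contradicting the hypothesis, so $x_j=w_{p+j}$ and $f_j$ is created by $w_{p+j}w_{p+j+1}$.

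By induction, $x_j=w_{p+j}$ for all $j=1,\ldots,l$, and each $f_j$ is created by $w_{p+j}w_{p+j+1}$. Because each edge of $W(U)$ corresponds to a unique length-$2$ cyclic-subword occurrence, $f_l=f_0$ is equivalent to $p+l\equiv p\pmod m$, i.e., $l=km$ for some positive integer $k$. Reading off the letters gives
\[
x_1x_2\cdots x_l \;=\; w_{p+1}w_{p+2}\cdots w_{p+l} \;=\; \bigl(w_{p+1}w_{p+2}\cdots w_{p+m}\bigr)^{k},
\]
a nontrivial $k$-th power of the cyclic conjugate $w_{p+1}\cdots w_{p+m}$ of $w$, as required. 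The only subtle point is extending the formula for $\sigma_v$ from the ``end'' interpretation explicitly given in the text to the ``start'' interpretation via $\sigma_v=\sigma_{v^{-1}}^{-1}$; this is cleanly absorbed by the preliminary swap of $w$ with $w^{-1}$, after which the argument is mechanical bookkeeping modulo $m$, driven entirely by the no-backtrack condition, which at each step rules out the single alternative branch that would reverse the direction of travel along $w$.
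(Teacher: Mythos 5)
Your proof is correct. Note first that the paper does not actually write out a proof of this lemma: the statement carries a terminal \(\square\), and the preceding paragraph records the one-step observation (well-definedness of \(\sigma_{y^{-1}}\circ\sigma_{x^{-1}}(e)\) forces \(xy\) to be a length-\(2\) cyclic subword of some \(w\) or \(w^{-1}\)) and then declares the lemma ``now elementary.'' What you have done is unwind exactly that observation into a complete induction, which is the intended argument. The key points you correctly identify are that each application of a connecting map stays inside the same occurrence of the same word in \(U\), that the no-backtrack hypothesis kills the one alternative branch at every step after the first, and that edges of \(W(U)\) are tagged by position (not just endpoints), so \(f_l=f_0\) really pins down \(l\equiv 0\pmod m\).

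Two small remarks. The reduction of the case \(x_1^{-1}=w_p\) ``by swapping \(w\) with \(w^{-1}\)'' tacitly relies on identifying, for each \(w\in U\), the edge created by the occurrence \(w_pw_{p+1}\) with the edge created by the occurrence \(w_{p+1}^{-1}w_p^{-1}\) in \(w^{-1}\), and on the compatibility of the connecting maps under this identification; this is fine, but it would be a touch cleaner to simply run your induction in the backward direction (\(x_j=w_{p-j+1}^{-1}\), \(f_j\) created by \(w_{p-j}w_{p-j+1}\)), which lands on a \emph{negative} power of a cyclic conjugate of \(w\) — still a nontrivial power, as the lemma requires. Also, you never invoke the cyclic instance \(x_1\ne x_l^{-1}\) of the hypothesis (indices are read modulo \(l\)); that is harmless, since it follows a posteriori from cyclic reducedness of \(w\) once \(x_j=w_{p+j}\) is established, but it is worth being aware that your argument uses slightly less than the stated hypothesis.
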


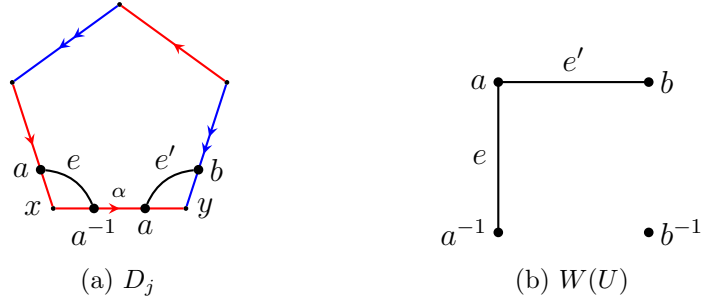
\begin{figure}
  \tikzstyle {a}=[red,postaction=decorate,decoration={%
    markings,%
    mark=at position .5 with {\arrow[red]{stealth};}}]
  \tikzstyle {b}=[blue,postaction=decorate,decoration={%
    markings,%
    mark=at position .43 with {\arrow[blue]{stealth};},%
    mark=at position .57 with {\arrow[blue]{stealth};}}]
  \tikzstyle {v}=[draw,shape=circle,fill=black,inner sep=0pt]
  \tikzstyle {bv}=[black,draw,shape=circle,fill=black,inner sep=1pt]
  \tikzstyle{every edge}=[-,draw]
\subfloat[(a) $D_j$]{
	\begin{tikzpicture}[thick]
   		\foreach \i in {0,...,4} 
			\draw (360/5*\i+90:1.5) node [v] (v\i) {} ;
		\draw[a] (v1)--(v2)
		node[pos=0.7,bv] (ap1) {};
		\draw[a] (v2)--(v3)
		node[pos=.5] (alph) {}
		node[pos=0.3,bv] (am) {}
		node[pos=.7,bv] (ap2) {};
		\draw[a] (v4)--(v0);
		\draw [b] (v0)--(v1);
		\draw [b] (v4)--(v3)
		node[pos=0.7,bv] (bp) {};
		\draw (v2) node [left] {$x$};
		\draw (v3) node [right] {$y$};
		\draw (alph) node [above] {${}_\alpha$};
		\draw [bend left] (ap1)  edge  node [above] {$e$} (am)
		(ap2)  edge  node [above] {$e'$} (bp);
		\draw (am)  node [below] {$a^{-1}$};
		\draw (ap1) node [left] {$a$};	
		\draw (ap2) node [below] {$a$};
		\draw (bp) node [right] {$b$};	
	\end{tikzpicture}
}
\quad\quad\quad\quad\quad\quad
  \subfloat[(b) $W(U)$]{\begin{tikzpicture}[thick]
      \draw  (-1,-1) node  [bv]  (am) {} node[left] {$a^{-1}$} 
      -- node[midway,left] {$e$} (-1,1) node [bv] (ap) {} node[left] {$a$} 
     -- node[midway,above]{$e'$} (1,1) node [bv] (bp) {} node[right]{$b$}
      (1,-1) node [bv] {} node[right]{$b^{-1}$};
\end{tikzpicture}}%
\caption{Each corner of a cell $D_j$ in $Z(U)$ corresponds an edge in $W(U)$.
Here, $F_2=\form{a,b}$ and $U=\{b^{-1}aba^2\}$. 
In these two figures, we note that $e^a=e'$.
}
\label{fig:connecting}
\end{figure}

Connecting maps can also be described in $Z(U)$. 
The link of a vertex $p$ in a polygonal disk $P$ is called the \emph{corner} of $P$ at $p$.
Suppose an edge $e$ is incident with $a_i^{-1}$ in $W(U)$, where $e$ corresponds to the corner of a vertex $x$ in some $D_j$ attached to $Z(U)$.
Since we are assuming that every word in $U$ is cyclically reduced, there exists a unique $a_i$-edge $\alpha$ outgoing from $x$. 
Choose the other endpoint $y$ of $\alpha$, and let $e'\in E(W(U))$ correspond to the corner of $D_j$ at $y$; see Figure~\ref{fig:connecting}.
Then we observe that $e^{a_i}=e'$.

\subsection{Graph-theoretic formulation of Tiling Conjecture}
Polygonality was described in terms of Whitehead graphs~\cite[Propositions~17 and~21]{Kim2009}.
But this description required infinitely many graphs to be examined.
In this subsection, we obtain a simpler formulation of polygonality requiring only one finite graph to be examined.

The following is immediate from the definition.
\begin{LEM}\label{lem:whitehead}
Let $G= (V,E)$ be a graph with a pairing $v\mapsto v^{-1}$.
Suppose that for each $v$ there is a bijection from $\delta(v^{-1})$ to $\delta(v)$
denoted as $e\mapsto e^v$.
If \[(e^v)^{v^{-1}}=e\] for each $v\in V$ and $e\in \delta(v^{-1})$,
then there exist a list $U$ of words $F_{\abs{V}/2}$ and a graph
isomorphism $\phi\co G\to W(U)$ such that the paring and the bijections correspond to the associated involution and to the associated connecting maps by $\phi$, respectively.
\end{LEM}

From now on, we will simply write ``$G$ is a Whitehead graph''
when we have a graph $G$ with a pairing $v\mapsto v^{-1}$ and bijections
$e\in \delta(v^{-1})\mapsto e^v\in \delta(v)$ for each vertex $v$ 
such that the hypothesis of Lemma~\ref{lem:whitehead} is satisfied.


A non-empty finite list of cycles of a Whitehead graph $G$ is called
\emph{balanced}
if 
\begin{enumerate}[(i)]
\item
$\mathcal{C}$ has at least one cycle of length at least three,
\item
for each pair of edges $e$ and $f$ incident with a vertex $v^{-1}$, the number of cycles in $\mathcal{C}$ containing both $e$ and $f$ is equal to the number of cycles in $\mathcal{C}$ containing both $e^v$ and $f^v$.
\end{enumerate}


\begin{LEM}\label{lem:equiv}
Let $n>1$. A list  $U$ of cyclically reduced words in $F_n$ is polygonal if and only if $W(U)$ admits a balanced list of cycles.
\end{LEM}
By Lemma~\ref{lem:equiv}, we can restate Tiling Conjecture combinatorially:

\begin{CON}\label{con}
A connected and pairwise well-connected Whitehead graph on at least
$4$ vertices
admits a balanced list of cycles.
\end{CON}

Now we provide the proof of Lemma~\ref{lem:equiv}.
We prove the forward direction by similar arguments to~\cite[Propositions~17 and 21]{Kim2009}.
The backward direction is what we mainly need for this paper.

\begin{proof}[Proof of Lemma~\ref{lem:equiv}]
To prove the forward direction, let us assume $U$ is polygonal; 
we can find a $U$-polygonal surface $S=\coprod_{1\le i\le m} P_i/\osim$ as in Definition~\ref{defn:polygonal}. 
In particular, each edge in $S^{(1)}$ is oriented and labeled by $\mathcal{A}_n$. Put $S^{(0)}=\{v_1,\ldots,v_t\}$. 
Fix $p_i$ in the interior of each $P_i$. 
There exists a natural map $\phi \co S\to Z(U)$ such that $\phi $ is locally injective away from $p_1,\ldots,p_m$. 
Since $S$ is a closed surface and 
$\phi$ is locally injective at $v_i$,
the image of each $\link_S(v_i)$ 
by $\phi $ is a cycle, say $C_i$, in $W(U)$.

Choose a vertex $v\in W(U)$ and two edges $e,f$ incident with $v$. 
Without loss of generality, we may assume that $v=a^{-1}$ for some generator $a\in\mathcal{A}_n$ and $C_1,\ldots, C_{t'}$ is the list of the cycles among $C_1,\ldots,C_t$ which contain both $e$ and $f$. 
For each $i=1,\ldots,t'$, there exists a unique $a$-edge $e_i$ outgoing from $v_i$. Let $v_{\sigma(i)}$ be the endpoint of $e_i$ other than $v_i$. 
There exist exactly two polygonal disks $Q_i$ and $R_i$ sharing $e_i$ in $S$, so that $\link(\phi;v_i)\co\link(v_i)\to W(U)$ sends the corner of $Q_i$ at $v_i$ to $e$, and that of $R_i$ at $v_i$ to $f$. By the definition of a connecting map, $\link(\phi;v_{\sigma(i)})\co\link(\sigma(v_i))\to W(U)$ maps the corners of $Q_i$ and $R_i$ at $v_{\sigma(i)}$ to $e^a$ and $f^a$, respectively; 
see Figure~\ref{fig:equiv}.
The correspondence $e\cup f \to e^a\cup f^a$ defines  an involution on the list of length-two subpaths of $ C_1,\ldots, C_t$. The conclusion follows.

\begin{figure}
  \tikzstyle {a}=[postaction=decorate,decoration={%
    markings,%
    mark=at position .5 with {\arrow[]{stealth};}}]
  \tikzstyle {b}=[postaction=decorate,decoration={%
    markings,%
    mark=at position .43 with {\arrow[]{stealth};},%
    mark=at position .57 with {\arrow[]{stealth};}}]
  \tikzstyle {c}=[postaction=decorate,decoration={%
    markings,%
    mark=at position .4 with {\arrow[]{stealth};},%
    mark=at position .5 with {\arrow[]{stealth};},
    mark=at position .6 with {\arrow[]{stealth};}
}]
  \tikzstyle {v}=[draw,shape=circle,fill=black,inner sep=0pt]
  \tikzstyle {bv}=[black,draw,shape=circle,fill=black,inner sep=1pt]
  \tikzstyle{every edge}=[-,draw]
  \subfloat[(a) $S$]{
  \begin{tikzpicture}[thick]
    \draw (-3,0) node[] (v1) {}
	[a]	-- (-1.5,0) node [bv] (v2) {};
	\node[left] at (-1.9,.3) {$v_i$};
	\draw (v2)
	[a]-- 
	node[pos=.2,v] (e2) {}
	node[pos=.5,above,black] {$e_i$}
	node[pos=.8,v] (pe2) {}
 	(1.5,0) node [bv] (v3) {};
	\node[right] at (1.9,.3) {$v_{i'}$};
	\draw (v3)
	[a]-- 
	(3,0) node [] (v4) {};
	\draw (-2.5,1.5) node[] (v5) {}
	[b]--
	node[pos=.2,v] (e1) {}
	(v2);
	\draw [orange,in=120,out=-30] (e1) edge node [above,black] {$e$} (e2);
	\draw (v2) 
	[b]-- 
	node[pos=.8,v] (e3) {}
	(-2.5,-1.5) node[] (v6) {};
	\draw [orange,out=-120,in=30] (e2) edge node [below,black] {$f$} (e3);	
	\draw (v3) 
	[c]--
	node[pos=.8,v] (pe1) {}
	(2.5,1.5) node[] (v7) {};
	\draw [purple,in=60,out=210] (pe1)
	edge
	node [left,pos=.1,black,inner sep=10pt] {$e^a$}
	(pe2);
	\draw (2.5,-1.5) node[] (v8) {}
	[c]-- 
	node[pos=.2,v] (pe3) {}
	(v3);	
	\draw [purple,out=-60,in=150] (pe2)
	edge
	node [left,pos=.9,black,inner sep=10pt] {$f^a$}
	(pe3);
	\draw[left] (0,1.2) node[] {$Q_i$};
	\draw[left] (0,-1.2) node[] {$R_i$};
\node  [inner sep=0.9pt] at (-90:1.5) {};  
   \end{tikzpicture}
}
\quad\quad
\subfloat[(b) $W(U)$]{\begin{tikzpicture}[thick]
    \draw  (-1.5,-1) node  [bv]  (bm) {} node[left] {$b^{-1}$} 
    [orange]-- node[pos=.3,above,black] {$f$} (0,-1) node [bv] (am) {} node [right,black]{$a^{-1}$};
 	\draw (am)
	[orange]-- node[midway,right,black] {$e$} 
	(-1.5,1) node [bv] (bp) {} node[left,black]{$b$};
	\draw
	(1.5,1) node [bv] (cp) {} node [right,black] {$c$} 
	[purple]-- node [midway,above,black] {$f^a$} 
	(0,1) node [bv] (ap) {} node [left,black] {$a$};
	\draw
	(ap)
	[purple]-- node [midway, right,black]  {$e^a$}
	(1.5,-1) node [bv] (cm) {} node [right,black]  {$c^{-1}$};
\end{tikzpicture}}%
\caption{Consecutive corners in $S$ and their images by a connecting map.
$F_3 = \form{a,b,c}$, and single, double and triple arrows denote the labels $a,b$ and $c$, respectively.
}
\label{fig:equiv}
\end{figure}
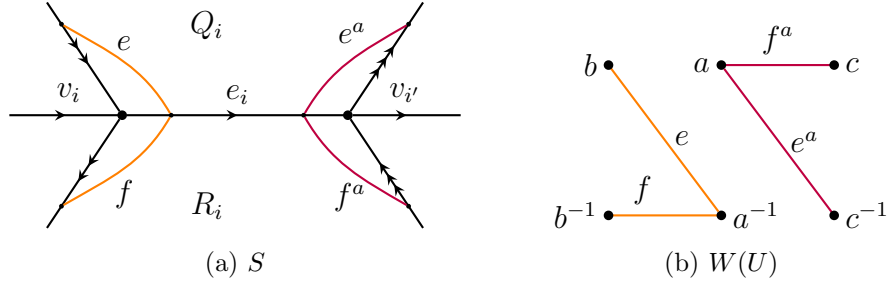

For the backward direction, consider a balanced list of cycles $ C_1,\ldots, C_t$ in $W(U)$.
For each $C_i$, let $V_i$ be a polygonal disk such that 
$\partial V_i$ is a cycle of the same length as $C_i$.
We will regard $\partial V_i$ as the dual cycle of $ C_i$, in the sense that each edge of $\partial V_i$ corresponds to a vertex of $C_i$ and incident edges correspond to adjacent vertices.
Choose a linear order $\prec$ on $\{(v,e):  e\in\delta(v)\}$ for each $v\in V(W(U))$ such that $(v,e)\prec (v,e')$ if and only if $(v^{-1},e^{v^{-1}})\prec (v^{-1}, (e')^{v^{-1}})$.
If an edge $g$ of $\partial V_i$ corresponds to the vertex $v\in V(W(U))$ of $C_i\subseteq W(U)$, then we will label the edge $g$ by $(v,\{e,f\})$ where $e$ and $f$ are the two edges of $C_i$ incident with $v$; see Figure~\ref{fig:equiv2} (a) and (b).
Considered as a side of $V_i$, the edge $g$ will be given with a transverse orientation, 
which is incoming into $V_i$ if $v\in\mathcal{A}_n$ and outgoing if $v\in\mathcal{A}_n^{-1}$.
If $w_e$ and $w_f$ denote the vertices of $g$ corresponding to $e$ and $f$ respectively, 
and $(v,e)\prec (v,f)$, then we shall orient $g$ from $w_f$ to $w_e$.
Define a side-paring $\sim_0$ on $V_1,\ldots,V_t$ such that
$\sim_0$ respects the orientations and moreover,
a side labeled by $(v,\{e,f\})$ is paired with a side labeled by $(v^{-1},\{e^{v^{-1}},f^{v^{-1}}\})$ for each $v\in V(W(U))$ and $e,f\in\delta(v)$ where $e$ and $f$ are consecutive edges of some cycle $C_i$. Such a side-pairing exists since $C_1,\ldots,C_t$ is balanced.
Consider the closed surface $S_0 = \coprod_i V_i/\osim_0$. 
Denote by $\eta$ and $\zeta$ the numbers of the edges and the faces in $S_0$, respectively. 
Each edge in $S_0$ is shared by two faces, and each face has at least two edges; 
moreover, at least one face has more than two edges.
So $2\zeta < \sum_i (\textrm{the number of sides in }V_i) = 2\eta$.

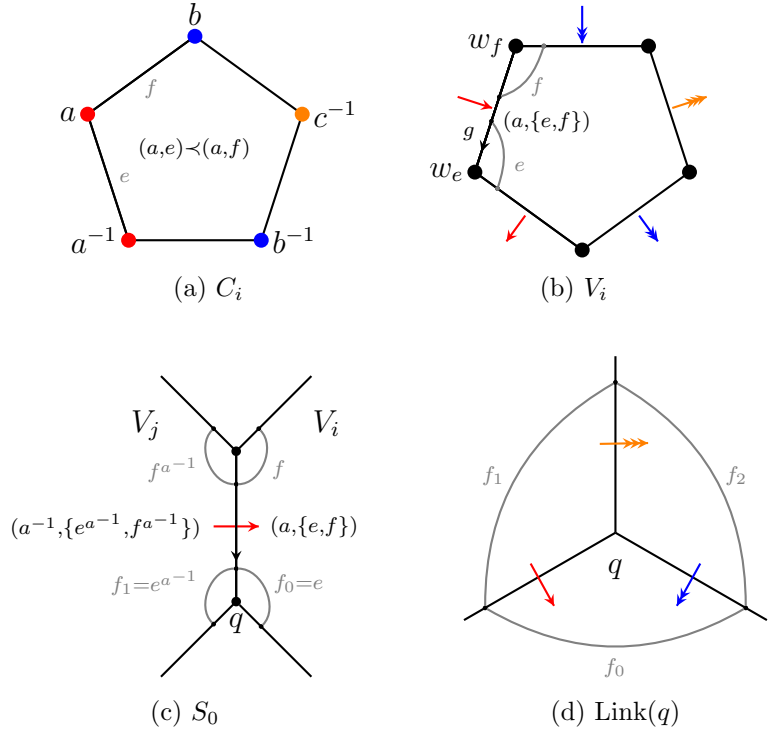
\begin{figure}
  \tikzstyle {a}=[red,postaction=decorate,decoration={%
    markings,%
    mark=at position 1 with {\arrow[red]{stealth};}}]
  \tikzstyle {ab}=[black,postaction=decorate,decoration={%
    markings,%
    mark=at position .85 with {\arrow[black]{stealth};}}]
  \tikzstyle {ab2}=[black,postaction=decorate,decoration={%
    markings,%
    mark=at position .75 with {\arrow[black]{stealth};}}]
  \tikzstyle {b}=[blue,postaction=decorate,decoration={%
    markings,%
    mark=at position .85 with {\arrow[blue]{stealth};},%
    mark=at position 1 with {\arrow[blue]{stealth};}}]
  \tikzstyle {c}=[orange,postaction=decorate,decoration={%
    markings,%
    mark=at position .7 with {\arrow[orange]{stealth};},%
    mark=at position .85 with {\arrow[orange]{stealth};},
    mark=at position 1 with {\arrow[orange]{stealth};}
}]
  \tikzstyle {v}=[draw,shape=circle,fill=black,inner sep=0pt]
  \tikzstyle {gv}=[draw,shape=circle,fill=gray,inner sep=0pt]
  \tikzstyle {bv}=[black,draw,shape=circle,fill=black,inner sep=1pt]
  \tikzstyle{every edge}=[-,draw]
\subfloat[(a) $C_i$]{
	\begin{tikzpicture}[thick]
   		\foreach \i in {0,...,4} 
			\draw (360/5*\i+90:1.5) node [v] (v\i) {} ;
		\draw (0:0) node [] {${}_{(a,e)\prec (a,f)}$};
		\draw (v1) node [left] {$a$} --(v2);
		\draw (v2) node [left] {$a^{-1}$} --(v3) node [right] {$b^{-1}$};
		\draw (v4) node [right] {$c^{-1}$} --(v0);
		\draw (v0) node [above] {$b$} --(v1);
		\draw[] (v0) -- node[pos=.4,below,gray] {${}_f$} (v1);
		\draw[] (v1) -- node[pos=.5,right,gray] {${}_e$} (v2);
		\draw (v4)  --(v3);
		\foreach \i in {1,2}
			\draw (v\i) node [shape=circle,fill=red,inner sep=2pt]  {};
		\foreach \i in {3,0}
			\draw (v\i) node [shape=circle,fill=blue,inner sep=2pt]  {};
		\foreach \i in {4}
			\draw (v\i) node [shape=circle,fill=orange,inner sep=2pt]  {};			
	\end{tikzpicture}
}
\quad
  \subfloat[(b) $V_i$]{	\begin{tikzpicture}[thick]
 		\foreach \i in {0,...,4} {
			\draw (360/5*\i+90:1.9) node [] (w\i) {} ;
			\draw (360/5*\i+90:1.1) node [] (wp\i) {} ;
			\draw (360/5*\i-90:1.5) node [v] (v\i) {} ;
			}
		\draw (v1) --(v2);
		\draw (v2)  --(v3);
		\draw (v4) --(v0);
		\draw (v0) --(v1);
		\draw[ab] (v3)  --(v4);
		\draw[left] (v3) node[] {$w_f$};
		\draw[left] (v4) node[] {$w_e$};
		\draw[] (v3) -- node[right,pos=.6] {${}_{(a,\{e,f\})}$} (v4);
		\draw[] (v3) -- node[left,pos=.7] {${}_{g}$} (v4);
		\draw[] (v2) -- (v3);
		\foreach \i in {1,2}
			\draw (v\i) node [shape=circle,fill=black,inner sep=2pt]  {};
		\foreach \i in {3,0}
			\draw (v\i) node [shape=circle,fill=black,inner sep=2pt]  {};
		\foreach \i in {4}
			\draw (v\i) node [shape=circle,fill=black,inner sep=2pt]  {};	
		\node  [inner sep=0.9pt] at (-90:1.5) {};  
 		\foreach \i in {1} {\draw[a] (w\i) -- (wp\i);};
 		\foreach \i in {2} {\draw[a] (wp\i) -- (w\i);};
 		\foreach \i in {3} {\draw[b] (wp\i) -- (w\i);};
 		\foreach \i in {4} {\draw[c] (wp\i) -- (w\i);};
 		\foreach \i in {0} {\draw[b] (w\i) -- (wp\i);};
		\draw[] (v2) -- node [pos=.8,v] (pf1) {} (v3);
		\draw[] (v4) -- node [pos=.6,v] (pf2) {} (v3);
		\draw[] (v4) -- node [pos=.2,v] (pf3) {} (v0);
		\draw[] (v4) -- node [pos=.4,v] (pf4) {} (v3);
		\draw [gray,out=-90-18,in=-18+36] (pf1) node [v] {}  edge  node[gray,right,pos=.7] {${}_f$} (pf2);			
		\draw [gray,out=72,in=-18-36] (pf3) node [v] {}  edge node[gray,right,pos=.3] {${}_e$} (pf4);			
	\end{tikzpicture}
	}%
\\
\subfloat[(c) $S_0$]{
  \begin{tikzpicture}[thick]
    \draw (-1,2) node[] (v1) {}
	-- node [pos=.7,gv] (fp) {}
	(0,1) node [bv] (v5) {}
	-- node [pos=.3,gv] (f) {}
	(1,2) node [] (v2) {};
	\draw (v5)
	-- (0,-1) node [bv] (v6) {}
	-- node [pos=.3,gv] (ep) {}
	(-1,-2) node [] (v3) {};
	\draw (v6)
	-- node [pos=.3,gv] (e) {}
	(1,-2) node [] (v4) {};
	\draw (v6) node[below] {$q$};
	\node[below] at (-1.2,1.7) {$V_j$};
	\node[below] at (1.2,1.7) {$V_i$};
	\draw[a] (-.3,0) node [] (v7) {}
	-- (.3,0) node [] (v8) {};
	\draw (v7) node [left] {${}_{(a^{-1},\{e^{a^{-1}},f^{a^{-1}}\})}$};
	\draw (v8) node [right] {${}_{(a,\{e,f\})}$};
	\draw[ab2] (v5) {}
	-- node[pos=.2,gv] (u1) {}
	node[pos=.8,gv] (u2) {}
	(v6) {};
	\draw [gray,out=-90-45,in=180] (fp) {}  edge 
	node[gray,left,pos=.6] {${}_{f^{a^{-1}}}$} 
	(u1) {};			
	\draw [gray,out=-45,in=0] (f)  {}  edge 
	node[gray,right,pos=.6] {${}_{f}$} 
	(u1) {};			
	\draw [gray,out=135,in=180] (ep) {}  edge 
	node[gray,left,pos=.6] {${}_{f_1=e^{a^{-1}}}$} 
	(u2) {};			
	\draw [gray,out=45,in=0] (e)  {}  edge 
	node[gray,right,pos=.6] {${}_{f_0=e}$} 
	(u2) {};
\node  [inner sep=0.9pt] at (-90:1.5) {};  
   \end{tikzpicture}
}
\qquad
  \subfloat[(d) $\link(q)$]{\begin{tikzpicture}[thick]
	\foreach \i in {0,...,2} {
		\draw (360/3*\i+90:2.5) node [] (v\i) {} ;
		\draw (360/3*\i+90:2) node [v] (w\i) {} ;
		\draw (0,0) -- (v\i);
		}
		\draw (0,-.2) node [below] {$q$};
		\draw[c] (90 + 10:1.2)--(90-20:1.27);
		\draw[a] (90-10+120 :1.2)--(90 +120 + 20:1.27);
		\draw[b] (90 + 10-120 :1.2)--(90-20-120:1.27);
		\draw [gray,out=-30,in=-150] (w1) edge  node[pos=.5,below] {${}_{f_0}$} (w2);
		\draw [gray,out=-30+120,in=-150+120] (w2) edge  node[pos=.5,right] {${}_{f_2}$} (w0);					
		\draw [gray,out=-30-120,in=-150-120] (w0) edge  node[pos=.5,left] {${}_{f_1}$} (w1);					
		\node  [inner sep=0.9pt] at (-90:1.5) {};  
\end{tikzpicture}}%
\caption{Constructing $V_i$ and $S_0$ from $C_i$ in the proof of Lemma~\ref{lem:equiv}.
In this example, we note from (d) that $f_1=f_0^{a^{-1}}, f_2=f_1^c$ and $f_0=f_2^b$.
}
\label{fig:equiv2}
\end{figure}

By the duality between $C_i$ and $V_i$, each corner of $V_i$ corresponds to an edge in $C_i$.
The link of a vertex $q$ of $S_0$ corresponds to the union of edges in $W(U)$ written as the following sequence
\[f_0\in\delta(x_1^{-1}), f_1 =f_0^{x_1}, f_2 = f_1^{x_2}, \ldots, f_\ell =f_{\ell-1}^{x_{\ell}}\]
so that $f_0 = f_\ell =(\cdots((f_0^{x_1})^{x_2})\cdots)^{x_\ell}$
for some vertices  $x_1,\ldots,x_\ell$ of $W(U)$; see Figure~\ref{fig:equiv2}(c).
By Lemma~\ref{lem:reading}, the word $x_1 \cdots x_\ell$ can be taken as a nontrivial power of a word in $U$.
We will follow the boundary curve $\alpha$ of a small neighborhood of $q$ with some orientation,
and whenever $\alpha$ crosses an edge of $S_0$ with the first component of the label being $a\in\mathcal{A}_n$, we record $a$ if the crossing coincides with the transverse orientation of the edge, and $a^{-1}$ otherwise.
Let $w_q\in F$ be the word obtained by this process.
Then $w_q = x_1 \cdots x_\ell$, up to taking an inverse and cyclic conjugations.

Let $S$ be a surface homeomorphic to $S_0$.
We give $S$ a $2$-dimensional cell complex structure, by letting the homeomorphic image of the dual graph of $S_0^{(1)}$ to be $S^{(1)}$.
In particular, the $2$-cells $P_1,\ldots,P_m$ in $S$ are the connected regions bounded by $S^{(1)}$.
The transverse orientations and the first components of the labels of the sides in $V_1,\ldots,V_t$ induce orientations and labels of the sides of $P_1,\ldots,P_m$.
By duality, the boundary of each $P_i$ in $S$ reads $w_q$ for some vertex $q$ of $S_0$; hence, $\partial P_i$ reads a nontrivial power of a word in $U$.
Finally, if we let $\nu$ be the number of the vertices in $S_0$, then
\[\chi(S) - m = \chi(S_0) - \nu = -\eta + \zeta < 0.\qedhere\]
\end{proof}
 
\begin{REM}\label{rem:polynomial time}
By Lemma~\ref{lem:equiv}, deciding whether a list $U$ of words in a free group is polygonal reduces to a nonnegative integer LP problem;
an alternative description of this fact is given by Touikan~\cite{Touikan2010}.
We note that there is also an algorithm deciding whether $U$ is diskbusting~\cite{Whitehead1936,Stong1997,Stallings1999,RVW2007}.
\end{REM}

Note that Tiling Conjecture for $n$ is equivalent to Conjecture~\ref{con} for $n$. In Sections~\ref{sec:regular} and~\ref{sec:4vertex}, we will prove Conjecture~\ref{con} for $k$-regular lists of words and for rank-two free groups.

\section{Regular Graphs and Proof of Theorem~\ref{thm:main 2}}\label{sec:regular}


We will prove that Conjecture~\ref{con} holds for $k$-regular graphs.
It turns out that we can prove a slightly stronger theorem, which will also be a base case of an induction used in the next section.

\begin{THM}\label{thm:regular}
  Let $G$ be a connected and pairwise well-connected $k$-regular
  Whitehead graph.
  Then there exists a nonempty list of cycles of $G$ 
  with positive integers $m_1$, $m_2$ such that
  \begin{enumerate}[(i)]
  \item   every edge is in exactly $m_1$ cycles in the list,   and
  \item each adjacent pair of edges 
  is contained in exactly $m_2$ cycles in the list.
  \end{enumerate}
  Moreover, at least one of the cycles in this list is not a bigon if
  $G$ has at least $4$ vertices.
\end{THM}

By
Proposition~\ref{prop:whitehead},
Theorem~\ref{thm:main 2} and the following corollary are trivial
consequences of Theorem~\ref{thm:regular}. 

\begin{COR}\label{cor:regularw}
A minimal, diskbusting, $k$-regular list of words in $F_n$ is polygonal when $n>1$.
\end{COR}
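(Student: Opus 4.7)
The plan is to apply Theorem~\ref{thm:regular} to the Whitehead graph $G := W(U)$ and then invoke the combinatorial criterion for polygonality from Lemma~\ref{lem:equiv}. First I would verify that $G$, equipped with the fixed-point-free involution $\mu(v) = v^{-1}$ and the connecting bijections $\sigma_v$ described in Section~\ref{subsec:connecting}, satisfies the hypotheses of Theorem~\ref{thm:regular}. The $k$-regularity of $U$ translates directly to $G$ being $k$-regular: each vertex $v \in \mathcal{A}_n \cup \mathcal{A}_n^{-1}$ has degree equal to the total number of appearances of $v$ and $v^{-1}$ among the words of $U$. Since $U$ is minimal and diskbusting, Proposition~\ref{prop:whitehead} supplies that $G$ is connected and that $\lambda(v, \mu(v)) = \deg(v) = k$ for every $v$. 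Moreover $k \geq 2$: if $k = 1$ then $G$ would be a perfect matching, which together with $|V(G)| = 2n \geq 4$ would force $G$ to be disconnected, contradicting diskbustingness.

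Next, Theorem~\ref{thm:regular} would supply a nonempty multiset $\mathcal{L}$ of cycles of $G$ together with positive integers $m_1, m_2$ such that every edge of $G$ lies in exactly $m_1$ cycles of $\mathcal{L}$ and every adjacent pair of edges lies in exactly $m_2$ cycles of $\mathcal{L}$. I would then verify the two hypotheses of Lemma~\ref{lem:equiv} for $\mathcal{L}$. The numerical symmetry condition is immediate: for edges $e, f \in \delta(v)$, the number of cycles of $\mathcal{L}$ containing both $e$ and $f$ equals $m_1$ if $e = f$ and $m_2$ otherwise, and since $\sigma_v$ is a bijection $\delta(v) \to \delta(\mu(v))$, exactly the same values hold for the pair $\sigma_v(e), \sigma_v(f)$.

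The one genuine subtlety is ensuring $\mathcal{L}$ contains a cycle of length at least three, since otherwise the concluding $\chi(S) < m$ inequality in Lemma~\ref{lem:equiv} would fail. I would dispatch this as follows: because $G$ is connected with $2n \geq 4$ vertices, some vertex $v$ must have two non-parallel incident edges $e, f$; otherwise every vertex would have all its incident edges parallel to a single neighbor, which together with $k$-regularity and connectivity would force $|V(G)| = 2$. Since $m_2 \geq 1$, some cycle of $\mathcal{L}$ contains both $e$ and $f$, and such a cycle has length at least three because $e$ and $f$ do not share both endpoints. Applying Lemma~\ref{lem:equiv} then yields that $U$ is polygonal, as desired.

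The only nontrivial ingredient is Theorem~\ref{thm:regular} itself, which is the main combinatorial result of Section~\ref{sec:regular} and rests on Edmonds' characterization of the perfect matching polytope. Given that theorem, the deduction of the corollary is essentially bookkeeping, with the one point of genuine care being the existence of a cycle of length $\geq 3$ in $\mathcal{L}$, handled by the non-parallel-edge observation above.
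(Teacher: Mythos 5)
Your proposal is correct and follows essentially the same route as the paper: verify the hypotheses of Theorem~\ref{thm:regular} via Proposition~\ref{prop:whitehead}, obtain the uniform list of cycles, observe that connectedness with $2n\ge 4$ vertices forces two adjacent non-parallel edges and hence a cycle of length at least three in the list, and conclude via Lemma~\ref{lem:equiv}. Your explicit checks that $k\ge 2$ and that the length-$\ge 3$ cycle exists are slightly more detailed than the paper's but amount to the same argument.
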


A graph $H$ is a \emph{subdivision} of $G$ if 
$H$ is obtained from $G$ by replacing each edge by a path of length at
least one.  
We remark that Conjecture~\ref{con} is also true for all subdivisions of $k$-regular graphs if $k>1$,
because every edge appears the same number of times in Theorem~\ref{thm:regular}.


\medskip

Let us start proving Theorem~\ref{thm:regular}.
A graph $G=(V,E)$ is called a \emph{$k$-graph} if it is $k$-regular
and $\abs{\delta(X)}\ge k$ for every subset $X$ of $V$ with $\abs{X} $ odd.
In particular if $k>0$, then every $k$-graph must have an even number of
vertices, because otherwise $\abs{\delta(V(G))}\ge k$.

It turns out that every $k$-regular
graph 
with the properties required by Conjecture~\ref{con} is a $k$-graph.
\begin{LEM}\label{lem:kgraph}
A $k$-regular pairwise well-connected graph is a $k$-graph.
\end{LEM}
\begin{proof}
Let $G=(V,E)$ be a $k$-regular pairwise well-connected graph.
 Suppose $X\subseteq V$ and $\abs{X}$ is odd. 
  Then there must be $x\in X$ with $x^{-1}\notin X$.
  By the definition of pairwise well-connectedness, 
  $\abs{\delta(X)}\ge k$.
\end{proof}

By the previous lemma, it is sufficient to consider $k$-graphs in order to prove Theorem~\ref{thm:regular}.
By using the characterization of the perfect matching polytope by Edmonds~\cite{Edmonds1965a},
Seymour~\cite{Seymour1979a} showed the following theorem. This is
also explained in Corollary~7.4.7 of the book by Lov\'asz and
Plummer~\cite{LP1986}.
A \emph{matching} is a set of edges in which no two are adjacent. 
A \emph{perfect matching} is a matching meeting every vertex.
\begin{THM}[Seymour~\cite{Seymour1979a}]
  Every $k$-graph is \emph{fractionally $k$-edge-colorable}. 
  In other words, every $k$-graph has a nonempty list of perfect matchings $M_1$, $M_2$,
  $\ldots$, $M_\ell$ such that 
  every edge is in exactly $\ell/k$ of them.
\end{THM}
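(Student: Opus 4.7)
The plan is to apply Edmonds' characterization of the perfect matching polytope directly to the uniform fractional assignment $x_e = 1/k$ on the edges of a $k$-graph $G$, and then clear denominators.

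First I would recall Edmonds' theorem: the perfect matching polytope of a graph $G$ is exactly the set of $x \in \R^{E(G)}_{\ge 0}$ satisfying $\sum_{e \in \delta(v)} x_e = 1$ for every $v \in V(G)$ and $\sum_{e \in \delta(X)} x_e \ge 1$ for every odd subset $X \subseteq V(G)$. Defining $x_e = 1/k$ for every $e \in E(G)$, both families of constraints are immediate: $\sum_{e \in \delta(v)} x_e = \deg(v)/k = 1$ by $k$-regularity, and $\sum_{e \in \delta(X)} x_e = \vform{\delta(X)}/k \ge 1$ for odd $X$ precisely because $G$ is a $k$-graph. Hence $x$ lies in the perfect matching polytope of $G$.

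Next I would use that the perfect matching polytope is a rational polytope (all defining inequalities have integer coefficients), so its vertices have rational coordinates and $x$ can be written as a convex combination $x = \sum_{i=1}^{N} \lambda_i\, \chi^{M_i}$ where the $M_i$ are perfect matchings of $G$, the $\lambda_i$ are positive rationals, and $\sum_i \lambda_i = 1$. Writing $\lambda_i = n_i/q$ for a common positive integer denominator $q$ (so $\sum_i n_i = q$), the identity at each edge $e$ reads
\[
\frac{1}{k} \;=\; \sum_{i=1}^{N} \frac{n_i}{q}\, \chi^{M_i}_e \;=\; \frac{1}{q}\sum_{i\,:\, e \in M_i} n_i,
\]
so $\sum_{i:\, e \in M_i} n_i = q/k$; in particular $k \mid q$.

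Finally I would take the list $\mathcal{L}$ in which each $M_i$ appears with multiplicity $n_i$. The list has length $\ell = \sum_i n_i = q$, is nonempty (since $x \ne 0$ forces some $n_i > 0$), and by the displayed identity each edge $e \in E(G)$ lies in exactly $q/k = \ell/k$ of the matchings counted with multiplicity in $\mathcal{L}$. This is exactly fractional $k$-edge-colorability. The only nontrivial input is Edmonds' polytope theorem; once that is invoked, the proof is a one-line verification plus a clearing-of-denominators argument, and I do not expect any real obstacle beyond citing Edmonds correctly.
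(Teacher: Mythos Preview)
Your proposal is correct and is precisely the argument the paper alludes to: the paper does not give its own proof of this cited theorem but notes that Seymour's proof proceeds ``by using the characterization of the perfect matching polytope by Edmonds,'' which is exactly the route you take (verify $x_e=1/k$ satisfies Edmonds' inequalities, then clear denominators in a rational convex combination of perfect matchings).
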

For sets $A$ and $B$, we write $A\Delta B=(A\setminus B)\cup
(B\setminus A)$.
\begin{LEM}\label{lem:regularlist}
  Let $k>1$.
  Every $k$-graph has a nonempty list of cycles 
 such that
 every edge appears in the same number of cycles
 and 
for each pair of adjacent edges $e$, $f$,
 the number of cycles in the list containing both $e$ and $f$ is
 identical.
\end{LEM}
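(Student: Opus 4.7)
The plan is to combine Seymour's theorem (just quoted in the excerpt) with the classical fact that the symmetric difference of two perfect matchings decomposes into vertex-disjoint even cycles. First I apply Seymour's theorem to the $k$-graph $G$ to obtain a nonempty list $M_1,M_2,\ldots,M_\ell$ of perfect matchings in which every edge appears in exactly $\ell/k$ of them. Since $k>1$, this list cannot consist of a single matching repeated (as then each edge would appear $0$ or $\ell$ times, forcing $\ell/k=\ell$), so at least two distinct matchings occur among the $M_i$.

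Next, for each unordered pair $\{i,j\}$ with $1\le i<j\le \ell$, the symmetric difference $M_i\Delta M_j$ is an edge-disjoint union of even cycles that alternate between edges of $M_i$ and $M_j$. I take $\mathcal{C}$ to be the list obtained by collecting every cycle arising in every such decomposition; it is nonempty by the previous paragraph.

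Now I count. Fix an edge $e$ and set $n_e=\ell/k$. Then $e$ lies in a cycle of $\mathcal{C}$ coming from the pair $\{i,j\}$ if and only if exactly one of $M_i,M_j$ contains $e$, and in that case $e$ belongs to precisely one cycle of the decomposition. Hence the total number of cycles of $\mathcal{C}$ containing $e$ is
\[
n_e(\ell-n_e)=\frac{\ell}{k}\left(\ell-\frac{\ell}{k}\right),
\]
which depends only on $k$ and $\ell$; this yields the constant $m_1$. For adjacent edges $e,f$ sharing a vertex $v$, no perfect matching contains both, so $\{e,f\}\subseteq M_i\Delta M_j$ if and only if one of $M_i,M_j$ contains $e$ and the other contains $f$. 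When this happens, the two edges at $v$ lying in $M_i\cup M_j$ are exactly $e$ and $f$, so $e$ and $f$ are consecutive in the unique cycle of $M_i\Delta M_j$ through $v$, and in particular lie in the same cycle of $\mathcal{C}$. The number of unordered pairs contributing such a cycle is $n_e n_f=\ell^2/k^2$, again independent of the choice of adjacent pair, giving the constant $m_2$.

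The argument is essentially bookkeeping once Seymour's theorem is in hand; the only point needing care is the observation that an adjacent pair $e,f$ always lands in the \emph{same} cycle of the symmetric-difference decomposition, which is forced by the degree condition at the shared vertex just noted. I expect no real obstacle beyond this verification.
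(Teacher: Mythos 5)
Your proof is correct and follows essentially the same route as the paper: both invoke Seymour's fractional edge-coloring theorem, take the symmetric differences $M_i\Delta M_j$ over all pairs, decompose them into cycles, and perform the same counting argument yielding $\frac{\ell}{k}(\ell-\frac{\ell}{k})$ cycles per edge and $(\ell/k)^2$ cycles per adjacent pair. Your extra remark that an adjacent pair $e,f$ must land in the same cycle of the decomposition (because the symmetric difference has degree exactly two at the shared vertex, consisting precisely of $e$ and $f$) is the same observation the paper uses implicitly.
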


\begin{proof}
  Let $M_1$, $M_2$, $\ldots$, $M_\ell$ be a nonempty list of perfect
  matchings of a $k$-graph $G=(V,E)$ such that 
  each edge appears in $\ell/k$ of them.
  Then for distinct $i,j$, 
  the set $M_i\Delta M_j$ induces a subgraph of $G$ 
  such that every vertex has degree $2$ or $0$.
  Thus each component of the subgraph $(V,M_i\Delta M_j)$ is a cycle.
  Let $C_1$, $C_2$, $\ldots$, $C_m$  be the list of cycles appearing
  as a component of the subgraph of $G$ induced by $M_i\Delta
  M_j$ for each pair of distinct $i$ and $j$. We allow repeated cycles.
  This list is nonempty because $k>1$ and so there exist $i, j$
  such that  $M_i\neq M_j$.

  Since each edge is contained in exactly $\ell/k$ of $M_1$, $M_2$,
  $\ldots$, $M_\ell$, 
  every edge is in exactly $\frac{\ell}{k}(\ell-\frac{\ell}{k})$
  cycles in the list.
  For two adjacent edges $e$ and $f$, 
  since no perfect matching contains both $e$ and $f$, 
  there are $(\ell/k)^2$ cycles in $C_1$, $C_2$, $\ldots$, $C_m$ using
  both $e$ and $f$. 
\end{proof}

To deduce Theorem~\ref{thm:regular} from Lemmas~\ref{lem:kgraph} and \ref{lem:regularlist}, it only remains to prove that the list $\mathcal{C}$ of cycles we obtain from Lemma~\ref{lem:kgraph} in a given graph $G$ contains at least one cycle which is not a bigon.
Indeed, since $G$ is a connected graph with at least four vertices, it has two adjacent edges $e$ and $f$, not parallel to each other. 
From the conclusion of Lemma~\ref{lem:kgraph}, we see that there must be a cycle in $\mathcal{C}$ containing both $e$ and $f$ and that cycle must have length at least three.

\medskip

We also note that even the minimality assumption can be lifted for rank-two free groups:

\begin{COR}\label{cor:kregular4}
Let $U$ be a $k$-regular list of cyclically reduced words in $F_2$.
Then $U$ is diskbusting if and only if $U$ is polygonal; in this case,
$D(U)$ contains a hyperbolic surface group.
\end{COR}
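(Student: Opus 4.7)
The plan is to establish the biconditional in two directions and then read off the hyperbolic surface subgroup from Theorem~\ref{thm:polygonal}. It is harmless to assume $k\ge 2$: if $k\le 1$, then $W(U)$ has maximum degree at most one and is automatically disconnected on the four vertices $\{a,a^{-1},b,b^{-1}\}$; the forthcoming case analysis will show that such a $U$ cannot be diskbusting, and $W(U)$ contains no cycle of length $\ge 3$ so $U$ cannot be polygonal either, making the equivalence vacuous in this range.

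For the backward direction (polygonal $\Rightarrow$ diskbusting), I would argue by contrapositive. If $U$ is not diskbusting, then $F_2$ splits as a free product of two nontrivial subgroups; since $F_2$ has rank two, these factors must both be infinite cyclic. The backward implication in the proof of Proposition~\ref{prop:tilingequiv}---whose argument is unconditional and does not invoke Tiling Conjecture---then exhibits $D(U)$ as a free product whose factors are each virtually of the form $\Z\times F_s$ or infinite cyclic. No such factor contains a hyperbolic surface subgroup, and a hyperbolic surface group, being one-ended, cannot split across any nontrivial free-product decomposition. Hence $D(U)$ contains no hyperbolic surface subgroup, and by Theorem~\ref{thm:polygonal} $U$ cannot be polygonal.

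For the forward direction (diskbusting $\Rightarrow$ polygonal), I would first show that $W(U)$ is connected. Because $W(U)$ is $k$-regular with $k\ge 2$ on four vertices, no vertex is isolated, so a disconnected $W(U)$ must partition its vertex set into two pairs. A direct case analysis of the three possible 2--2 partitions shows that each forces every word of $U$ into a cyclic free factor of $F_2$; for instance, the partition $\{a,b\}\sqcup\{a^{-1},b^{-1}\}$ admits only the subwords $ab^{-1}$, $ba^{-1}$, $a^{-1}b$, $b^{-1}a$, so every $u\in U$ is a power of the primitive element $ab^{-1}$, contradicting diskbustability. Next, since $|\delta(X)|=k$ whenever $|X|\in\{1,3\}$, the four-vertex $k$-regular graph $W(U)$ is automatically a $k$-graph, and Lemma~\ref{lem:regularlist} produces a nonempty list of cycles in which every edge appears in the same number $N_1$ of cycles and every pair of adjacent edges appears together in the same number $N_2$ of cycles. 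This uniformity at once verifies the matching hypothesis of Lemma~\ref{lem:equiv}: for any vertex $v$ and any distinct $e,f\in\delta(v)$, the pair $\{\sigma_v(e),\sigma_v(f)\}$ is again an adjacent pair (at $\mu(v)$), so both pairs appear in exactly $N_2$ cycles.

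The main obstacle is ensuring that the cycle list contains at least one cycle of length $\ge 3$, since Lemma~\ref{lem:equiv} explicitly excludes the all-bigon solution. I plan to exploit the fact that $W(U)$ has only four vertices: there are exactly three ``pairings'' of the vertex set into two disjoint pairs, and every perfect matching of $W(U)$ uses edges belonging to exactly one pairing. The connectedness argument above also shows that at least two of the three pairings contain edges of $W(U)$, since the disconnected cases correspond precisely to all edges lying in a single pairing. Therefore the fractional edge-coloring underlying the proof of Lemma~\ref{lem:regularlist} must include perfect matchings from at least two distinct pairings, and the symmetric difference of two such matchings visits all four vertices, producing a 4-cycle rather than a disjoint union of bigons. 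This 4-cycle supplies the required cycle of length $\ge 3$; Lemma~\ref{lem:equiv} then yields that $U$ is polygonal, and Theorem~\ref{thm:polygonal} supplies the hyperbolic surface subgroup of $D(U)$.
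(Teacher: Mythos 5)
Your proof is correct and follows the same overall strategy as the paper: the forward direction passes through Lemma~\ref{lem:regularlist} and Lemma~\ref{lem:equiv}, and the converse invokes Theorem~\ref{thm:polygonal} together with the backward implication in the proof of Proposition~\ref{prop:tilingequiv}. You deviate in two places, both defensibly. To see that $W(U)$ is connected when $U$ is diskbusting, the paper cites the Whitehead/Stong/Stallings result, whereas you run an explicit case analysis over the three possible $2$--$2$ splits of $\{a,a^{-1},b,b^{-1}\}$; this is self-contained and also sidesteps the fact that the paper's cited statement is phrased for \emph{minimal} sets, a hypothesis you are not given here. To guarantee a cycle of length at least three, the paper observes that a connected $4$-vertex graph has a pair of incident non-parallel edges, which by the positive constant $m_2$ in Theorem~\ref{thm:regular} must appear together on some cycle in the list; you instead open up the proof of Lemma~\ref{lem:regularlist} and note that on four vertices every perfect matching fixes one of three pairings, so two matchings of different type produce a $4$-cycle as their symmetric difference. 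Both routes close the same gap; the paper's argument uses Lemma~\ref{lem:regularlist} as a black box and is a little shorter, while yours gives a more explicit picture of where the long cycle comes from.
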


\begin{proof}
We note that a $k$-regular $4$-vertex graph is always a $k$-graph.

For the sufficiency, we recall that if $U$ is diskbusting in $F_2$,
then $W(U)$ is connected~\cite{Stong1997,Stallings1999}.
Since a connected $4$-vertex graph contains at least one pair of 
incident edges which are not parallel,
Lemma~\ref{lem:regularlist} implies that 
$W(U)$ contains a list of cycles, not all bigons, such that each pair of incident edges appears the same number of times in the list.
Lemma~\ref{lem:equiv} proves the claim.

For the necessity, we note that the proof of the sufficiency part of
Proposition~\ref{prop:tilingequiv} shows if $U$ is not diskbusting in $F_2$, then $D(U)$ does not contain a hyperbolic surface group.
\end{proof}

\section{Proof of Theorem~\ref{thm:main 1}}\label{sec:4vertex}
In general, if we have functions $f\co X\to Y$ and $g\co Y'\to Z$, and an element $x\in X$ such that $f(x)\in Y'$, then we will often use the notation $x^{fg}$ to mean $g(f(x))$.
This is in harmony with our notation of associated connecting maps $e\mapsto e^v$ for Whitehead graphs, where the vertex $v$ is then regarded as a bijection from $\delta(v^{-1})$ to $\delta(v)$. We have already noted that $e^{v v^{-1}} = e$ for $e\in \delta(v^{-1})$.

Let $G$ be a Whitehead graph of a minimal diskbusting list of words in $F_2$.
For a vertex $w$ of $G$, 
a permutation $\pi$ on $\delta(w)$ is called \emph{$w$-good}
if $\{e,e^{\pi w^{-1}}\}$ is a matching of $G$ for every edge $e$
incident with $w$. Note that $\{e,f\}$ is a matching of $G$ if and
only if either $e=f$ or $e$, $f$ share no vertex.
In particular, if $x$ is an edge joining $w$ and $w^{-1}$, then 
$x^{\pi}=x^w$.

For a permutation $\pi$ on $\delta(w)$, let us define a permutation $\tau$ on $\delta(w^{-1})$ by $f^{\tau}= f^{w\pi^{-1}w^{-1}}$.
Then for $e = f^{\tau w}= f^{w \pi^{-1}}$, we have $\{f,f^{\tau w}\} = \{e^{\pi w^{-1}},e\}$.
We see that $\pi$ is $w$-good if and only if $\tau$ is $w^{-1}$-good.

A permutation $\pi$ on  a set $X$ induces a permutation $\pi^{(2)}$ on
$2$-element subsets of $X$ such that
$\{x,y\}^{\pi^{(2)}}=\{x^\pi,y^\pi\}$ for all distinct $x,y\in X$.
A $w$-good permutation $\pi$ on $\delta (w)$ is \emph{uniform}
if
$\pi^{(2)}$ has a list of orbits $X_1$, $X_2$, $\ldots$, $X_t$
satisfying the following.
\begin{enumerate}[(i)]
\item If $\{x,y\}\in X_i$, then $x$ and $y$ do not share a vertex
  other than $w$ or $w^{-1}$ in $G$.
\item 
  There is a constant $c>0$ such that for every edge $e\in \delta(w)$, 
  \[
  \abs{ \{ (X_i,F): 1\le i\le t,~F\in X_i \text{ and } e\in F\} } = c.\]
\end{enumerate}

The following lemma shows that 
in order to prove Conjecture~\ref{con} for $4$-vertex graphs, 
it is enough to find a $w$-good uniform
permutation on the edges incident with a vertex $w$ of minimum degree.

\begin{LEM}\label{lem:inductive}
Let $G$ be a connected and pairwise well-connected Whitehead graph on
four vertices.
Let $w$ be a vertex of $G$ with the minimum degree.
If there is a $w$-good uniform permutation $\pi$ on
  $\delta(w)$, 
  then 
  $G$ admits a nonempty list of cycles satisfying the following properties.
  \begin{enumerate}[(a)]
  \item   For distinct edges $e_1,e_2\in \delta(w)$, 
    the number of cycles in the list containing both $e_1$ and $e_2$
    is equal to the number of cycles in the list containing both
    $e_1^{w^{-1}}$ and $e_2^{w^{-1}}$.
  \item There is a constant $c_1>0$ such that each edge appears in
    exactly $c_1$ cycles in the list.
 \item  There is a constant $c_2>0$ such that 
    for a vertex $v\in V(G)\setminus\{w,w^{-1}\}$ and each pair of distinct edges $e_1,e_2\in \delta(v)$,
    exactly $c_2$ cycles in the list contain both $e_1$ and $e_2$.
  \item The list contains a cycle of length at least three.
\end{enumerate}
\end{LEM}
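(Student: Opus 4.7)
Label the four vertices of $G$ as $w$, $\mu(w)$, $u$, $\mu(u)$. The approach is to construct the cycle list directly from the orbits of $\pi^{(2)}$, assigning one or more canonical cycles to each pair $F = \{x,y\} \in X_i$, and then adjusting multiplicities to meet all four properties. First I would classify the pairs by the endpoints of $x, y$ other than $w$: condition~(i) of the uniform permutation forces $x, y$ not to share any vertex outside $\{w,\mu(w)\}$, so exactly four configurations arise — (AA) both $x, y$ are $w\mu(w)$-edges; (AB) one is a $w\mu(w)$-edge, the other a $wu$-edge; (AC) the symmetric case with $w\mu(u)$; and (BC) one is a $wu$-edge, the other a $w\mu(u)$-edge. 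The $w$-good condition then pins down $\sigma_w(\pi(x))$: it equals $x$ when $x$ is a $w\mu(w)$-edge, it is a $\mu(w)\mu(u)$-edge when $x$ is a $wu$-edge, and a $\mu(w)u$-edge when $x$ is a $w\mu(u)$-edge.

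Second, for each pair I define a canonical cycle $C_F$ that uses $x, y$ consecutively at $w$ and $\sigma_w(\pi(x)), \sigma_w(\pi(y))$ consecutively at $\mu(w)$: a bigon for type (AA); a $4$-cycle of the form $w - \mu(w) - \mu(u) - u - w$ (or its $u\leftrightarrow\mu(u)$ swap) requiring one $u\mu(u)$-edge as connector for types (AB), (AC); and the $4$-cycle $w - u - \mu(w) - \mu(u) - w$, which needs no extra edge, for type (BC). Third, I assemble the list with multiplicities chosen to equalize edge-counts: letting $d$ be the number of $u\mu(u)$-edges, include each (AA) and (BC) cycle with multiplicity $d$, and in the (AB)/(AC) cases range over all $d$ choices of $u\mu(u)$-edge. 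To cover pairs of edges at $u$ and $\mu(u)$ not yet represented — notably pairs of two $u\mu(u)$-edges and pairs of parallel edges in $\delta(w) \cup \delta(\mu(w))$ — I would append bigons with carefully chosen multiplicities, using the bijection $\sigma_w\pi$ (sending $wu$-edges to $\mu(w)\mu(u)$-edges and $w\mu(u)$-edges to $\mu(w)u$-edges) to maintain the symmetry that (a) demands.

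Verifying the four properties: (a) follows because $\{e_1, e_2\} \subseteq \delta(w)$ and its $(\pi^{-1})^{(2)}$-image lie in a common $\pi^{(2)}$-orbit, so they have equal multiplicity in $\bigcup_i X_i$, which translates by the canonical construction into equal cycle-counts for $\{e_1, e_2\}$ at $w$ and $\{\sigma_w(e_1), \sigma_w(e_2)\}$ at $\mu(w)$; (b) follows from uniformity of $\pi$ together with the multiplicity factor $d$ and the balancing bigons; (d) follows because $G$ is connected on four vertices, so $\delta(w)$ cannot consist entirely of $w\mu(w)$-edges (otherwise $\{u, \mu(u)\}$ would be separated from $\{w, \mu(w)\}$), hence some non-(AA) pair exists and contributes a $4$-cycle.

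The main obstacle is property (c): making every pair of edges at $u$ (resp.\ $\mu(u)$) lie in the same number of cycles. This requires an explicit count over the six pair-types at $u$ (pairs of two $wu$-edges, two $\mu(w)u$-edges, two $u\mu(u)$-edges, and the three mixed types) and a simultaneous choice of auxiliary-bigon multiplicities that respects (b). The balancing exploits the structural equalities $\#\{wu\text{-edges}\} = \#\{\mu(w)\mu(u)\text{-edges}\}$ and $\#\{w\mu(u)\text{-edges}\} = \#\{\mu(w)u\text{-edges}\}$ forced by the $w$-good condition, together with the connectivity hypothesis $\lambda(u,\mu(u)) = \deg(u)$. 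This bookkeeping is the most delicate step and dictates the precise multiplicities of both the canonical and the auxiliary cycles.
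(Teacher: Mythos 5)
Your proposal has a genuine gap, and more fundamentally it is a different (and, as written, incomplete) strategy from the one the paper uses.

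\textbf{The concrete gap.} Your canonical cycle for a (BC) pair is a $4$-cycle $w-u-\mu(w)-\mu(u)-w$ built from $x$, $y$, $\sigma_w(\pi(x))$, $\sigma_w(\pi(y))$, with \emph{no} $u\mu(u)$-edge used. This means that when there are no $w\mu(w)$-edges (so no (AB) or (AC) pairs at all), none of your canonical cycles ever touches a $u\mu(u)$-edge. Take $G$ to be the $4$-cycle $w-u-\mu(w)-\mu(u)-w$ with one extra edge $g$ joining $u$ and $\mu(u)$: it is connected, $\lambda(v,\mu(v))=\deg(v)$ for every $v$, and $w$ has minimum degree~$2$. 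Any $w$-good $\pi$ yields a single (BC) orbit, and your construction produces only the boundary $4$-cycle. The edge $g$ appears in $0$ cycles, so (b) is violated, and your fallback (appending bigons) is impossible here since $d=1$ (no bigon at $u\mu(u)$) and there are no parallel edges anywhere. Thus the plan as stated cannot yield a valid list. The fix would be to route (BC) cycles through a $u\mu(u)$-edge --- which is exactly what the paper does, using two triangles $\{e,x,y\}$ and $\{e,\sigma_w\pi(x),\sigma_w\pi(y)\}$ through a fixed $e$ joining $u$ and $\mu(u)$.

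\textbf{Why the paper's approach sidesteps your ``main obstacle.''} You yourself flag property (c) as delicate bookkeeping that you leave unfinished, and indeed a single-shot direct balancing over all six pair-types at $u$ and $\mu(u)$ is painful. The paper avoids it entirely by inducting on $|E(G)|$: if $\deg u=\deg w$ the graph is regular and Theorem~\ref{thm:regular} gives the list outright; otherwise $\deg u>\deg w$ guarantees a $u\mu(u)$-edge $e$, and one applies the inductive hypothesis to $G\setminus e$ to obtain a list already satisfying (a)--(d) there. Then the cycles built from the orbits (all passing through $e$), plus $cc_2'$ copies of bigons $\{e,f\}$ for each other $u\mu(u)$-edge $f$, are spliced in with multiplicities $c_2'$ and $c$ to re-balance. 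The inductive hypothesis hands you (c) for free on $G\setminus e$, and the arithmetic $cc_2'(a+b-1)$ closes (b). Your proposal neither uses the regular-graph base case nor the edge-deletion induction, which are the two structural ingredients that make the verification tractable; as a result the balancing step you defer is not merely tedious but, with your (BC) cycle choice, sometimes impossible.
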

For $e\in E(G)$, we denote by $G\setminus e$ the graph obtained from
$G$ by deleting 
the edge $e$. Ends of $e$ are not removed when deleting $e$.
\begin{proof}
We say that a list of cycles is \emph{desirable} if it satisfies (a), (b), (c), and (d).
Let $u$ be a vertex of $G$ other than $w$ and $w^{-1}$.
We proceed by induction on $\deg(u)-\deg(w)$.
If $\deg(u)=\deg(w)$, then the conclusion follows by
  Theorem~\ref{thm:regular}.
We now assume that $\deg(u)>\deg (w)$.
There should exist an edge $e$ joining $u$ and $u^{-1}$.

Since the statement of the lemma is independent of the choice of the associated connecting map from $\delta(u)$ to $\delta(u^{-1})$,
we may assume $e^u=e$ so that $G\setminus e$ is a Whitehead
graph.

Observe that $G\setminus e$ is connected and pairwise well-connected Whitehead graph
and $\pi$ is also a $w$-good uniform permutation in $G\setminus e$.
By the induction hypothesis, $G\setminus e$ has  a desirable list of cycles 
$\mathcal{C}'=(C_1',C_2', \ldots,C_s')$.
Let $c_1'$, $c_2'$ be the constants given by (b) and (c), respectively, for the list $\mathcal{C}'$ in $G\setminus e$.
Since $\pi$ is $w$-good uniform, $\pi^{(2)}$ has a list of
  orbits $X_1$, $X_2$, $\ldots$, $X_t$ satisfying (i) and (ii),
  where each edge in $\delta(w)$ appears $c$ times in this list.
 
  Suppose that $\{x,y\}\in X_i$. Then $\{x^\pi,y^\pi\}\in
  X_i$.
  If $x,y\in \delta(w^{-1})$, then
  we let $C_{xy}$ be a cycle formed by two edges $x=x^{\pi w^{-1}}$
  and $y=y^{\pi w^{-1}}$.
  If $x,y\notin \delta(w^{-1})$,
  let $C_{xy}$ be a list of two cycles, one  formed by three edges
  $e$, $x$, $y$, and 
  the other  formed by three edges $e$, $x^{\pi w^{-1}}$, $ y^{\pi w^{-1}}$.
  If exactly one of $x$ and $y$, say $y$,  is incident with $w^{-1}$, then
  let $C_{xy}$ be the cycle formed by four edges $e$, $x$,
  $y= y^{\pi w^{-1}}$, $x^{\pi w^{-1}}$.
  Since $x$ and $y$ never share $u$ or $u^{-1}$ by (i), 
  the list $C_{xy}$ always consists of one or  two cycles of $G$.
    
  Let $\mathcal{C}=(C_1,C_2,\ldots,C_p)$ be the list of all cycles in $C_{xy}$ for each
  member $\{x,y\}$ of $X_i$ for all $i=1,2,\ldots,t$. Notice that we allow
  repetitions of cycles.
  We claim that $\mathcal{C}$ satisfies (a).
 For each occurrence of $x,y\in \delta(w)$ in a cycle in the
  list, 
  there is a corresponding $i$ such that $\{x,y\}\in X_i$. Since $X_i$ is an
  orbit, there is $\{x',y'\}\in X_i$ where $(x')^\pi=x$ and $(y')^\pi=y$.
  Then the list contains cycles in $C_{x'y'}$ for $X_i$.
  This proves the claim because
$x^{w^{-1}}=(x')^{\pi w^{-1}}$ and $y^{w^{-1}}=(y')^{\pi w^{-1}}$.
 
  By (ii) of the definition of a uniform permutation, 
  for each edge $f$ incident with $w$, 
  there are $c$ cycles in the list $\mathcal{C}$ 
  containing the edge $f$ of $G$. 
 Notice that whenever an edge $f$ in
  $C_{xy}$  is in   $\delta(\{w,w^{-1}\})$, $C_{xy}$
  contains $e$ and $f^{\pi w^{-1}}$ by the construction.
  Therefore  every edge incident with $w$ or $w^{-1}$
  appears $c$ times 
 in $\mathcal{C}$.

  We now construct a desirable list of cycles for $G$ as follows:
  We take $c_2'$ copies of $\mathcal{C}$,
  $c$ copies of $\mathcal{C}'$, 
  and $cc_2'$ copies of cycles formed by  $e$ and each edge
  $f\neq e$  joining $u$ and $u^{-1}$.
  We claim that this is a desirable list of cycles of $G$.
  It is trivial to check (a) and (d). 
 For distinct edges $e_1,e_2$ incident with $u$, 
  the list contains $cc_2'$ cycles containing both of them, verifying (c).
  Let $a$ be the number of edges in $\delta(u)$ incident with $w$ or $w^{-1}$
  and let $b$ be the number of edges joining $u$ and $u^{-1}$.
  By (c) on $G\setminus e$, we have $c_1'=c_2'(a+b-2)$.
  Finally to prove (b),  every edge incident with $w$
  or $w^{-1}$ appears $cc_2'+cc_1'=cc_2'(a+b-1)$ times in the list
  and 
  the edge $e$ appears $acc_2'+(b-1)cc_2'=cc_2'(a+b-1)$ times in the list.
  An edge $f\neq e$ joining $u$ and $u^{-1}$ appears
  $cc_1'+cc_2'=cc_2'(a+b-1)$ times.
\end{proof}

\begin{REM}
Let us describe topological ideas of the lemma for $U\subseteq F_2$ and $G=W(U)$.
Recall that a corner and a link in a $U$-polygonal surface is realized as an edge and a cycle, respectively in $G$.
For each \[\{x,y\}\subseteq {\delta(w)\choose 2}\setminus\left({\delta(u)\choose 2}\cup{\delta(u^{-1})\choose 2}\right)\]
we are constructing a cycle (or two cycles) $C_{xy}$ in $G$
and ultimately, a list of links that are matched up to build a $U$-polygonal surface $S$
as in the proof of Lemma~\ref{lem:equiv}.
In that $U$-polygonal surface, $C_{xy}$ will correspond to the link(s) of one or two vertices,
and at those links $\{x,y\}$ and $\{x^{\pi w^{-1}},y^{\pi w^{-1}}\}$ are the pairs of corners at the incoming and outgoing portions of $w$-edge, respectively.
Having a $\pi^{(2)}$-orbit guarantees that $\{x^{\pi},y^{\pi}\}$
appears as the pair of corners at the incoming portion at another vertex in $S$ as well,
so that we can match the corners $\{x^{\pi w^{-1}},y^{\pi w^{-1}}\}$ to the corners  $\{x^{\pi},y^{\pi}\}$
along $w$-edges; see Figure~\ref{fig:equiv2}(c) for a dual picture.
So far we have matched the pairs of corners at incoming or outgoing $w$-edges.
The above lemma shows how to match the rest of the pairs of the corners in a ``uniform'' way.
That is, each pair of edges at $\delta(u)$ or at $\delta(u^{-1})$ appear 
exactly $c_2$ times among the links in $S$.
\end{REM}

\subsection{Lemma on odd paths and even cycles}
To find a $w$-good uniform permutation of $\delta(w)$,
we need a combinatorial lemma on a disjoint union of
odd directed paths and even directed cycles.
The \emph{length} of a path or a cycle is the number of its edges.
A path or  a cycle is \emph{odd} if its length is odd
and \emph{even} otherwise.
We say that a directed path or a directed cycle is \emph{long} if its length is at least three, and \emph{short} otherwise.

\begin{DEFN}\label{defn:calm}
Let $D$ be a directed graph 
that is a disjoint union of odd directed paths and even directed cycles
such that all ends of the directed paths are colored with red or blue
and no other vertices are colored.
\begin{enumerate}
\item A vertex of $D$ is called a \emph{source} if its in-degree is $0$. It is called a \emph{sink} if its out-degree is $0$.
\item We say $D$ is \emph{calm} if at most half of all  the vertices are blue and at most half of all the vertices are red.
\item 
  If a directed path $P$ starts with a red vertex and ends
  with a red vertex, then we say $P$ is an \emph{R-R path}. Similarly we define \emph{R-B paths}, \emph{B-B paths} and \emph{B-R paths}.
\item 
  A set of directed paths is \emph{monochromatic} if it has no blue vertex or no red vertex.
\end{enumerate}
\end{DEFN}

\begin{LEM}\label{lem:oddpath}
  Let $D$ be a directed graph that is a disjoint union of
  odd directed paths and even directed cycles such that all sources
  or sinks are colored with red or blue, 
  no other vertices are
  colored,
  and the number of red sources is equal to the number of red sinks.
  If $D$ has at least four vertices and is calm,
 then
  $D$ can be partitioned into calm subgraphs,
  each of which is one of eight types listed below. (See Figure~\ref{fig:types}.)
 \begin{enumerate}
 \item A short R-R  path, 
   a short B-B path, and possibly a short cycle.
\item A monochromatic path and one or two  short cycles.
 \item A short cycle, a B-R path, and an R-B path.
 \item At least two short cycles.
 \item  A long monochromatic path 
   and a monochromatic set of short paths, possibly none.
 \item 
   A B-R path, an R-B
   path, and a monochromatic set of short paths, possibly none.
 \item  A long cycle 
   and a monochromatic set of short paths, possibly none.
 \item A long cycle and a short cycle.
 \end{enumerate}
\end{LEM}
We remark that in a subgraph of type (5), we require that 
the long path is monochromatic and the set of short
paths monochromatic, but we allow the long path to have a 
color unused in short paths.
We also emphasize that each subgraph of type (1)-(8) should be
calm. For instance,  in a subgraph of type (6), the sum of lengths of
an R-B path and a B-R
path should be big enough to make this graph calm.
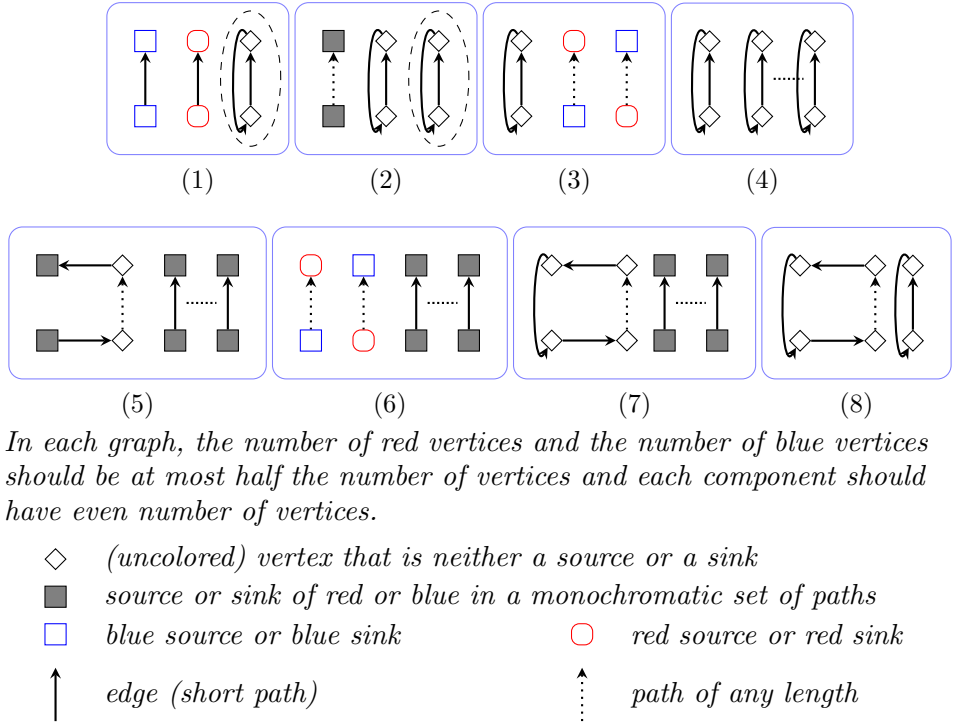
\begin{figure}
  \tikzstyle{v}=[draw, shape=diamond,solid, color=black, inner sep=2pt, minimum width=2pt]
  \tikzstyle{r}=[rounded corners=3,draw, solid, color=red, inner sep=4pt, minimum width=4pt]
  \tikzstyle{re}=[ rounded corners=3,draw, solid, color=red, fill=red!50, inner sep=4pt, minimum width=4pt]
  \tikzstyle{b}=[draw, solid, color=blue, inner sep=4pt, minimum width=4pt]
  \tikzstyle{be}=[draw,solid, color=blue, fill=blue!50, inner sep=4pt, minimum width=4pt]
  \tikzstyle{g}=[draw, solid, color=black, fill=black!50, inner sep=4pt, minimum width=4pt]
  \tikzstyle{ge}=[draw,solid, color=black, fill=black!50, inner sep=4pt, minimum width=4pt]
  \tikzstyle{every edge}=[->,>=stealth,draw,thick,overlay]
  \tikzstyle{w}=[node distance=7mm]
  \tikzstyle{ddd}=[thick,densely dotted,-]
  \tikzstyle{e}=[->,thick,color=black,out=120,in=-120,draw,>=stealth]
  \tikzstyle{background rectangle}=[draw=blue!50, rounded corners=1ex]
  \tikzstyle{showbg}=[show background rectangle,inner frame sep=2ex]
  \subfloat[(1)]{\tikz[showbg]{
      \node [b] (bl) {};
      \node [b] [above of=bl]{}
      edge[<-] (bl);
      \node [r] (br) [w,right of=bl] {};
      \node [r] [above of=br] {}
      edge[<-] (br);
      \node [v] (r) [w,right of=br] {};
      \node [v] [above of=r] {}
      edge[<-] node[midway] (ct){} (r)
      edge[e] (r);   
      \draw [overlay,dashed] (ct) ellipse (.4cm and .9cm);
  }}\,%
  \subfloat[(2)]{\tikz[showbg]{
    \node[g] (bl){};
    \node[ge]  [above of=bl] {}
      edge[<-,dotted] (bl);
      \node [v] (r) [w,right of=bl] {};
      \node [v] [above of=r] {}
      edge[<-] (r)
      edge[e] (r);   
     \node [v] (r2) [w,right of=br] {};
      \node [v] [above of=r2] {}
      edge[<-] node[midway] (ctt){} (r2)
      edge[e] (r2);   
      \draw [dashed,overlay] (ctt) ellipse (.4cm and .9cm);
 }}\,%
\subfloat[(3)]{\tikz[showbg]{
      \node[r] (bl)  {};
      \node[b] (b1) [above of=bl] {}
      edge[<-,dotted] (bl);
      \node[b] (bll) [w,left of=bl]{};
      \node[r] (b1) [above of=bll] {}
      edge[<-,dotted] (bll);
      \node [v] (r) [w,left of=bll] {};
      \node [v] [above of=r] {}
      edge[<-] (r)
      edge[e] (r);   
   }}\,%
 \subfloat[(4)]{\tikz[showbg]{
      \node [v] (r) [w] {};
      \node [v] [above of=r] {}
      edge[<-] (r)
      edge[e] (r);   
      \node [v] (r1) [w,right of=r] {};
      \node [v] [above of=r1] {}
      edge[<-] node[midway] (start) {} (r1)
      edge[e] (r1);   
      \node [v] (r2) [w,right of=r1] {};
      \node [v] [above of=r2] {}
      edge[<-] node[midway] (end) {} (r2)
      edge[e] (r2);   
     \draw [ddd] (start)--(end);
    }}\\%
 \subfloat[(5)]{\tikz[showbg]{
    \node[g] (bl){};
      \node[v] (b1) [right of=bl] {}
      edge[<-] (bl);
      \node[v] (b2) [above of=b1] {}
      edge[<-,dotted] (b1);
      \node[ge]  [left of=b2] {}
      edge[<-] (b2);
      \node [g] (br) [w,right of=b1] {};
      \node [ge] [above of=br] {}
      edge[<-] node[midway](start) {} (br);
      \node [g] (br2) [w,right of=br] {};
     \node [ge] [above of=br2] {}
      edge[<-] node[midway](end) {} (br2);
      \draw [ddd] (start)--(end);
  }}\,%
 \subfloat[(6)]{\tikz[showbg]{
      \node[r] (bl)  {};
      \node[b] (b1) [above of=bl] {}
      edge[<-,dotted] (bl);
      \node[b] (bll) [w,left of=bl]{};
      \node[r] (b1) [above of=bll] {}
      edge[<-,dotted] (bll);
      \node [g] (br) [w,right of=bl] {};
     \node [ge] [above of=br] {}
      edge[<-] node[midway](start) {} (br);
      \node [g] (br2) [w,right of=br] {};
     \node [ge] [above of=br2] {}
      edge[<-] node[midway](end) {} (br2);
      \draw [ddd] (start)--(end);
    }}\,%
  \subfloat[(7)]{\tikz[showbg]{
    \node[v] (bl){};
      \node[v] (b1) [right of=bl] {}
      edge[<-] (bl);
      \node[v] (b2) [above of=b1] {}
      edge[<-,dotted] (b1);
      \node[v]  [left of=b2] {}
      edge[<-] (b2)
      edge[e] (bl);
     \draw (1.5,0) node [g] (br) [w] {};
     \node [ge] [above of=br] {}
      edge[<-] node[midway](start) {} (br);
      \node [g] (br2) [w,right of=br] {};
     \node [ge] [above of=br2] {}
      edge[<-] node[midway](end) {} (br2);
      \draw [ddd] (start)--(end);
      }}\,%
  \subfloat[(8)]{\tikz[showbg]{
    \node[v] (bl){};
      \node[v] (b1) [right of=bl] {}
      edge[<-] (bl);
      \node[v] (b2) [above of=b1] {}
      edge[<-,dotted] (b1);
      \node[v]  [left of=b2] {}
      edge[<-] (b2)
      edge[e] (bl);
     \draw (1.5,0) node [v] (br) [w] {};
     \node [v] [above of=br] {}
     edge[<-] (br)
     edge[e] (br);
     }}
   
   \begin{flushleft}
     \noindent\small\emph{%
       In each graph, 
       the number of red vertices and the number of blue vertices
       should be  at
       most 
       half the number of vertices
     and each component should have even number of vertices.}
   \end{flushleft}

  \begin{tikzpicture}
     \draw (0,1.5) node [g] {};
     \draw(.5,1.5) node [right]{\small\emph{source or sink of red or blue
       in a monochromatic set of paths}};
     \draw (0,2) node[v] {};
     \draw (.5,2) node  [right] {\small\emph{(uncolored) vertex that is neither a
       source or a sink}};
     \draw (7,1) node[r]   {} ; 
     \draw (7.5,1) node [right] {\small\emph{red source or red sink}};
     \draw (0,1) node[b] {};
     \draw (.5,1) node [right] {\small\emph{blue source or blue sink}};
     \draw (0,-.3) node  (br3)  {};
     \node  [above of=br3] {}
      edge[<-]  (br3);
     \draw (0.5,0.2) node[right]  {\small\emph{edge (short path)}};
     \draw (7,-.3) node  (br2)  {};
     \node  [above of=br2] {}
      edge[<-,dotted]  (br2);
     \draw (7.5,0.2) node[right]  {\small\emph{path of any length}};
     \end{tikzpicture}

  \caption{Eight types of calm subgraphs in Lemma~\ref{lem:oddpath}}
  \label{fig:types}
\end{figure}
\begin{proof}
  We proceed by induction on $\abs{V(D)}$.
  If $D$ has a subgraph $H$ that is a disjoint union of a short R-R
  path and a short B-B path,
  then $D\setminus V(H)$, the subgraph obtained by removing vertices
  of  $H$ from  $D$, 
  is still calm. 
  If $D=H$, then we have nothing to prove.
  If $\abs{V(D)\setminus V(H)}= 2$, then 
  $D$ is the disjoint union of a short R-R path, a short B-B path, and
  a short cycle, and therefore $D$ is a directed graph of type (1). 
  If $\abs{V(D)\setminus V(H)}\ge 4$, then 
  $H$ is a calm subgraph of type (1).
  Then we apply the induction
  hypothesis to get a partition for $D\setminus V(H)$. 

  Therefore we may assume that $D$ has 
  no pair of a short B-B path and a short R-R path.
  By symmetry, we may assume that $D$ has no short R-R path.
  Then in each component, the number of red vertices is at most half
  of the number of vertices. Thus, in order to check whether some
  disjoint union of components is calm, it is enough to count blue vertices.

  Suppose that $D$ has a short cycle and a short B-B path.
  We are done if $D$ is a graph of type (2).
  Thus we  may assume that $D$ has at least  
eight vertices. 
  Let $X$ be the set of vertices in the pair of a short cycle and a short B-B
  path. Then the subgraph of $D$ induced on $X$ is a subgraph of type (2).
  Because $X$ has two blue vertices and two uncolored vertices,
  $D\setminus X$ is calm and has at least 
four  vertices.
  By the induction hypothesis, we obtain a calm partition of
  $D\setminus X$. This together with the subgraph induced by $X$ is 
  a calm partition of $D$.

  We may now assume that either $D$ has no short cycles, or $D$ has no
  short B-B path. 

  \noindent (Case 1) Suppose that $D$ has no short cycles. 
  The subgraph of $D$ consisting of all components other than short B-B paths can be partitioned
  into calm subgraphs   $P_1$, $P_2$, $\ldots$, $P_k$ of type (5),
  (6), or (7), 
because the number of R-B paths is equal to the number of B-R paths.
We claim that short B-B paths can be
  assigned to   those subgraphs while maintaining each $P_i$ to 
  be calm.
 Suppose that $P_i$ has $2b_i$ blue vertices and $2n_i=\abs{V(P_i)}$.
 Notice that $b_i$ and $n_i$ are integers.
 Let $x$ be the number of short B-B paths in $D$.
 Since $D$ is calm,
 $2(2x+\sum_{i=1}^k 2b_i)  \le \sum_{i=1}^k 2n_i+2x$
and therefore $x\le \sum_{i=1}^k (n_i-2b_i)$.
 Each $P_i$ can afford to have  $n_i-2b_i$ short B-B paths
 to be calm.
 Overall all $P_1,\ldots,P_k$ can afford $\sum_{i=1}^k (n_i-2b_i)$ short B-B
 paths; thus consuming all short B-B paths.
 This proves the claim.

 \noindent (Case 2) 
Suppose $D$ has short cycles but has no short B-B paths.
If $D$ has at least two short cycles, then we can take all short
 cycles as a subgraph of type (4)
 and the subgraph of $D$ consisting of all components other than short cycles can be decomposed into
 subgraphs, each of which is type (5), (6), or (7).

Thus we may assume $D$ has exactly one short cycle.
Since $D$ has at least 
four vertices, $D$ must have 
a subgraph $P$ consisting of components of $D$ that is  one of the following type:
a monochromatic path,
a long cycle,
or a pair of a B-R path and an R-B path.
Then $P$ with the short cycle forms a calm subgraph of type (2), (8),
or (3), respectively.
 The subgraph of $D$ induced by all the remaining components 
 can be decomposed into subgraphs of type (5), (6), and (7).
\end{proof}

\subsection{Finding a $w$-good uniform permutation} 
Let $G$ be a connected and pairwise well-connected Whitehead graph with four vertices.
Let $w$ be a vertex of $G$ with the minimum degree
and let $u$ be a vertex of $G$ other than $w$ and $w^{-1}$. 

Let $e_1,e_2,\ldots,e_m$ be the edges incident with $w$
and let $f_1,f_2,\ldots,f_m$ be the edges incident with $w^{-1}$
so that $f_i^w=e_i$.
We construct an auxiliary directed graph $D$ on 
the disjoint union of $\{e_1,e_2,\ldots,e_m\}$
and $\{f_1,f_2,\ldots,f_m\}$ as follows:
\begin{enumerate}[(i)]
\item For all $i\in \{1,2,\ldots,m\}$, 
$D$ has an edge from $f_i$ to $e_i$.
\item If $e_i$ and $f_j$ denote the same edge in $G$, 
  then $D$ has an edge from $e_i$ to $f_j$.
\end{enumerate}
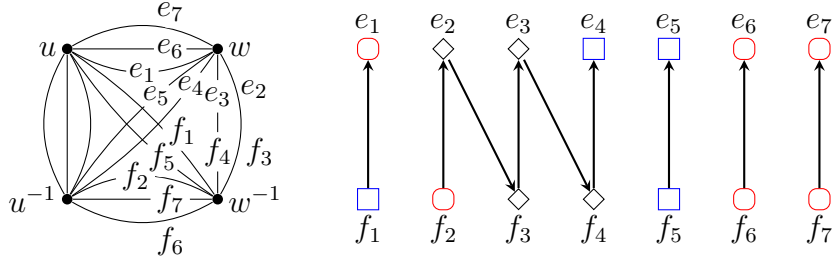
\begin{figure}
  \centering
  \tikzstyle{v}=[circle,solid, solid, fill=black, inner sep=0pt, minimum width=4pt]
 \tikzstyle{every edge}=[-,draw]
  \tikzstyle{l}=[circle,inner sep=0pt,fill=white]
  \begin{tikzpicture}
    \node [v] at (-3,0) (v0) {};
    \node [v] at (-3,2) (v1) {} 
    edge (v0)
    edge [bend right] (v0)
    edge [bend left] (v0);
 \node [v] at (-1,0) (v2) {}
    edge node [l,pos=0.3] {$f_7$} (v0)
    edge [bend right] node [l,pos=0.55] {$f_2$} (v0)
    edge [bend left] node [l,pos=0.3,below=0.5pt] {$f_6$} (v0)
    edge [in=-45-10,out=180-45+10] node [l,pos=0.3] {$f_5$} (v1)
   edge [in=-45+10,out=180-45-10] node [l,pos=0.25,above] {$f_1$} (v1);
   \node [v] at (-1,2) (v3) {}
   edge node [l,pos=0.30] {$e_3$} node [l,pos=0.7] {$f_4$} (v2)
    edge [bend left] node [l,right=.5pt,pos=0.25] {$e_2$} node [right=0.5pt,l,pos=0.7] {$f_3$} (v2)
    edge [out=180+45+10,in=45-10] node [l,pos=.18] {$e_4$} (v0)
    edge [out=180+45-10,in=45+10] node [l,pos=.34] {$e_5$} (v0)
    edge [bend left]  node [l,pos=0.5] {$e_1$} (v1)
    edge  node [l,pos=0.3] {$e_6$} (v1)
    edge [bend right] node [l,above,pos=0.3] {$e_7$} (v1);
    \draw (v0) node [left] {$u^{-1}$};
    \draw (v1) node [left] {$u$};
    \draw (v2) node [right] {$w^{-1}$};
    \draw (v3) node [right] {$w$};
  \tikzstyle{box}=[draw, shape=diamond,solid, color=black, inner sep=2pt, minimum width=2pt]
    \tikzstyle{r}=[ rounded corners=3,draw, solid, color=red, inner sep=4pt, minimum width=4pt]
    \tikzstyle{re}=[ rounded corners=3,draw, solid, color=red, fill=red!50, inner sep=4pt, minimum width=4pt]
    \tikzstyle{b}=[draw, solid, color=blue, inner sep=4pt, minimum width=4pt]
    \tikzstyle{be}=[draw, solid, color=blue, fill=blue!50, inner sep=4pt, minimum width=4pt]
    \tikzstyle{every edge}=[->,>=stealth,draw,thick]
    \tikzstyle{eb}=[->,thick,color=blue,out=120,in=-120,draw,>=stealth]
    \tikzstyle{er}=[->,thick,color=red,out=120,in=-120,draw,>=stealth]
    \foreach \x in {1,6,7}  {    \node [r] at (\x,2) (e\x) {} ;}
   \foreach \x in {4,5}  {    \node [b] at (\x,2) (e\x) {}; };
    \foreach \x in {2,3}  {    \node [box] at (\x,2) (e\x) {}; };
    \foreach \x in {2,6,7}  {    \node [r] at (\x,0) (f\x) {} edge (e\x);   };
    \foreach \x in {1,5}  {    \node [b] at (\x,0) (f\x) {} edge (e\x); };
    \foreach \x in {3,4}  {    \node [box] at (\x,0) (f\x) {} edge
      (e\x); };
    \foreach \x in {1,2,3,4,5,6,7} {
      \node [above=2pt] at (e\x)  {$e_{\x}$};
      \node [below=2pt] at (f\x)   {$f_{\x}$};
    }
    \node at (e2) {} edge (f3);
    \node at (e3) {} edge (f4);
\end{tikzpicture} 
  \caption{A graph and its auxiliary directed graph at $w$}
  \label{fig:ex}
\end{figure}
We have an example in Figure~\ref{fig:ex}.
It is easy to observe the following.
\begin{itemize}
\item Every vertex in $\{e_1,e_2,\ldots,e_m\}$ of $D$ 
  has in-degree $1$.
\item Every vertex in $\{f_1,f_2,\ldots,f_m\}$ of $D$ 
  has out-degree $1$.
\item A vertex $e_i$ of $D$ has out-degree $1$ if the edge $e_i$ of
  $G$ is incident with $u^{-1}$,
  and out-degree $0$ if otherwise.
\item A vertex $f_i$ of $D$ has in-degree $1$ if the edge $f_i$ of
  $G$ is incident with $u$,
  and in-degree $0$ if otherwise.
\end{itemize}
By the degree condition, $D$ is the disjoint union of odd
directed paths
and even directed cycles.

Let $r$ be the number of edges of $G$ joining $u$ and $w$
and let $b$ be the number of edges of $G$ joining $u^{-1}$ and $w$.
For each $i$, we color $e_i$ 
red 
if it is incident with $u$
and 
blue
if it is incident with $u^{-1}$.
Similarly for each $i$, we color $f_i$ 
blue
if it is incident with $u$
and 
red 
if it is incident with $u^{-1}$.
Clearly there are $r$ red vertices and $b$ blue vertices in $\{e_1,e_2,\ldots,e_m\}$.
By general observation on Whitehead graphs, 
there are $r$ edges of $G$ joining $u^{-1}$ and $w^{-1}$,
and $b$ edges of $G$ joining $u$ and $w^{-1}$. 
We also assume that $G$ has $\deg(u)$ edge-disjoint paths
from $u$ to $u^{-1}$.
Therefore $\abs{\delta (\{u,w\})}\ge \abs{\delta(u)}$
and $\abs{\delta(\{u,w^{-1}\})}\ge \abs{\delta(u)}$.
This implies that 
$b+b+(m-r-b)\ge b+r$ and 
$r+r+(m-r-b)\ge b+r$.
Thus 
\[
2r\le m \text{ and } 2b\le m.
\]
Note that these inequalities are consequences of minimal diskbusting property of $U\subseteq F_2$ if $G=W(U)$.

From now on, our goal is to find a $w$-good permutation $\pi$ on $\delta(w)$ 
given a directed graph $D$ with additional edges.

\begin{LEM}\label{lem:good}
  Suppose that there exists a directed graph $D'$ 
  obtained from $D$ by adding one edge 
  from each sink 
  to a source 
   with the same color
  so that every vertex has in-degree $1$ and out-degree $1$ in $D'$. 
  Let $\pi$ be a permutation on $\delta (w)=\{e_1,e_2,\ldots,e_m\}$
  so that 
  $e_i^\pi=e_j$ if and only if 
  $D'$ has a directed walk from $e_i$ to $e_j$ of length two.
  Then $\pi$ is $w$-good.
\end{LEM}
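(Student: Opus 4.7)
The plan is to unwind the definition of $\pi$ directly and check that $\{e_i,\sigma_w(\pi(e_i))\}$ is a matching for every edge $e_i\in\delta(w)$. Since every vertex of $D'$ has both in-degree and out-degree equal to $1$, the unique walk of length two from $e_i$ must have the form $e_i\to f_k\to e_k$ for some index $k$: the first arrow lands at an $f$-vertex because every out-neighbor of $e_i$ in $D$ is an $f$-vertex (by rule (ii)) and the added edges of $D'$ go from out-degree-$0$ $e$-vertices to in-degree-$0$ $f$-vertices; the second arrow is forced to be $f_k\to e_k$ because $f_k$ already has out-degree $1$ in $D$ by rule (i), so no new edge is added at $f_k$ when passing from $D$ to $D'$. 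Hence $\pi(e_i)=e_k$ and $\sigma_w(\pi(e_i))=f_k$, and showing $\pi$ is $w$-good reduces to showing that $\{e_i,f_k\}$ is a matching of $G$ whenever $f_k$ is the $D'$-out-neighbor of $e_i$.

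I then split the verification into two cases according to the origin of the arrow $e_i\to f_k$. If this arrow belongs already to $D$, then by rule (ii) the vertices $e_i$ and $f_k$ denote the same edge of $G$, so $\{e_i,f_k\}$ is trivially a matching. Otherwise the arrow was added in passing from $D$ to $D'$, which forces $e_i$ and $f_k$ to share a color. Recalling the color convention (an $e$-vertex is red iff incident with $u$ and blue iff incident with $\mu(u)$, while an $f$-vertex is red iff incident with $\mu(u)$ and blue iff incident with $u$), a red-red pair has endpoints $\{w,u\}$ and $\{\mu(w),\mu(u)\}$, and a blue-blue pair has endpoints $\{w,\mu(u)\}$ and $\{\mu(w),u\}$. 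In either case, fixed-point-freeness of $\mu$ ensures that the four endpoints are distinct, so $e_i$ and $f_k$ share no vertex and $\{e_i,f_k\}$ is a matching.

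Combining the two cases gives that $\pi$ is $w$-good. I do not anticipate any serious obstacle here: once the walk $e_i\to f_k\to e_k$ is identified, the rest is a short case analysis built directly into the color convention of the previous subsection, which was evidently designed so that any newly inserted arrow of $D'$ pairs an edge of $G$ with its $\mu$-image disjoint from it. The only step that deserves real care is the initial bookkeeping on degrees in $D$ pinning down the out-neighbor of $e_i$ in $D'$ as an $f_k$, and its out-neighbor in turn as $e_k$, so that $\sigma_w(\pi(e_i))=f_k$ is correctly identified.
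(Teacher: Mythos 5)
Your proof is correct and takes essentially the same approach as the paper: reducing $w$-goodness to showing that the $D'$-out-neighbor $f_k$ of $e_i$ forms a matching with $e_i$, then splitting on whether the arrow $e_i\to f_k$ lies in $D$ (so $e_i=f_k$ as an edge of $G$) or was newly added (so $e_i$ and $f_k$ have the same color and hence are vertex-disjoint). You simply spell out in more detail the bookkeeping that the paper's proof leaves implicit.
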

Let us call such a directed graph $D'$ a \emph{completion} of $D$.
A completion of $D'$ always exists, 
because the number of red sources is equal to the
number of red sinks.
Clearly there are $r! \, b! $ completions  of $D$.
\begin{proof}
  It is enough to show that 
  if $D'$ has an edge $e$ from $e_i$ to $f_j$, 
  then $\{e_i,f_j\}$  is a matching of $G$.
  If 
  $e\in E(D)$, then
  $e_i=f_j$ and therefore $\{e_i,f_j\}=\{e_i\}$ is a matching of $G$.
  If 
  $e\notin E(D)$, then 
  $e_i$ and $f_j$ should have the same color 
  and therefore $e_i$ and $f_j$ do not share any vertex.
\end{proof}

Out of $r!\,b!$ completions of $D'$, we wish to find a completion $D'$ of $D$  so that the
$w$-good permutation induced by $D'$ is uniform.

\begin{LEM}\label{lem:uniformtype}
  If $D$ is a calm directed graph of type (1), (2), $\ldots$, (8) described
  in Lemma~\ref{lem:oddpath}, then $D$ has a completion $D'$ so that 
  the induced $w$-good permutation is uniform.
\end{LEM}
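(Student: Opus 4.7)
My plan is to prove the lemma by an eight-case analysis mirroring the eight types in Lemma~\ref{lem:oddpath}. In each type I will (a) exhibit a completion $D'$ of $D$, i.e.\ a specific matching of the sinks to sources of the same color, (b) read off the induced $w$-good permutation $\pi$ on $\delta(w)$ using Lemma~\ref{lem:good}, and (c) write down an explicit list $X_1,\ldots,X_t$ of orbits of $\pi^{(2)}$ that satisfies the two conditions for uniformity.

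Two structural observations reduce what has to be checked. First, every arc of $D$ and every arc added in a completion goes between an $e$-vertex and an $f$-vertex, so every cycle of $D'$ is even and strictly alternates between the two sides; consequently each cycle of $\pi$ on $\{e_1,\ldots,e_m\}$ is obtained by reading the $e$-vertices around one cycle of $D'$ in order. Second, condition (i) of uniformity on a pair $\{x,y\}\subseteq \delta(w)$ rules out exactly the two forbidden configurations \emph{both red} (both edges incident with $u$) and \emph{both blue} (both edges incident with $\mu(u)$); every other pair is automatically legal because the only other vertices of $G$ available are $w$ and $\mu(w)$.

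For the types whose non-cycle components are all short R--B or B--R paths and the cycle components are short -- namely (1), (3), and (4) -- the completion is forced to give an involution $\pi$, so each $\pi$-cycle has length $1$ or $2$; the orbits of $\pi^{(2)}$ can then be enumerated as a handful of singletons and pairs and combined with suitable multiplicities to achieve a common edge-coverage constant $c$. Type (2) and type (8) each contain one long component together with short cycles that deliver diamond-colored fixed points of $\pi$; closing the long part into a single long $\pi$-cycle $C$ and pairing each vertex of $C$ with one of these diamond fixed points produces the required orbits.

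The genuine obstacles are types (5), (6), and (7), where a long monochromatic path (or a long cycle that turns out monochromatic once augmented by short monochromatic paths of the same color) coexists with only same-colored short paths. Naively taking intra-$C$ pairs in the orbit list would give only same-colored pairs and violate (i). My approach in these cases is to arrange the matching inside the dominant-color components so that they become a single long $\pi$-cycle $C$, and then list only \emph{inter}-cycle orbits pairing a vertex of $C$ with a vertex of a short $\pi$-cycle of the opposite color or of diamond color. The inequalities $2r\le m$ and $2b\le m$ established just before Lemma~\ref{lem:good} ensure that enough opposite-colored or diamond $e$-vertices are present to form such inter-cycle orbits, and standard $\operatorname{lcm}$-style counting on the sizes of the participating cycles of $\pi$ shows that one can choose multiplicities so that each edge of $\delta(w)$ is covered the same constant number of times. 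Once these cases are verified, the remaining types fall into the same template and the verifications are routine counting.
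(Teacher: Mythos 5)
Your opening observations are correct and useful: cycles of a completion $D'$ alternate between $e$-vertices and $f$-vertices, and condition (i) for a pair $\{x,y\}\subseteq\delta(w)$ amounts exactly to forbidding \emph{both red} (both incident with $u$) and \emph{both blue} (both incident with $\mu(u)$). The treatment of types (2), (7) and (8) --- close the long part, then pair its vertices against the uncolored short cycles, fixing multiplicities by a counting argument --- is in the same spirit as the paper's Cases 2 and 4 and is fine.

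However, there are two genuine gaps. First, your grouping of type (3) with (1) and (4) as ``involution'' cases is wrong: in type (3) the B-R and R-B paths are not required to be short, so after completion they form a single cycle of arbitrary even length, not a 2-cycle, and $\pi$ need not be an involution. The paper treats this with the orbit $O_C=\{\{y,x_i\}\}$ against the short cycle's fixed point $y$ together with $O_P=\{\{x_i,x_{i+1}\}\}$, with multiplicities chosen separately for $m=2$ and $m>2$. More seriously, your strategy for types (5) and (6) --- close the dominant-colour components into one long $\pi$-cycle $C$ and then \emph{only} use inter-cycle orbits pairing vertices of $C$ against short $\pi$-cycles ``of the opposite colour or of diamond colour'' --- cannot be carried out in the monochromatic subcase (type (5) with a single colour, or type (6) when the short paths share a colour with one of the endpoints). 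If you merge all components of $D$ into a single cycle $C$ there is nothing left to pair against; if instead you keep the short monochromatic paths as separate short cycles, every short cycle carries a coloured $e$-vertex of the \emph{same} colour as the coloured $e$-vertex of $C$, so the inter-cycle pairs you want to use violate (i). The inequalities $2r\le m$ and $2b\le m$ do not rescue this: they do not guarantee the existence of short cycles of opposite or neutral colour (there may be none at all). The idea you are missing is the paper's treatment in Case 5: build a completion that is a \emph{single} cycle in which all the same-coloured vertices appear consecutively, then take the single orbit $O_P=\{\{x_i,x_{i+\lfloor m/2\rfloor}\}\}$; the half-bounds on $r$ and $b$ are used precisely to guarantee that no such ``antipodal'' pair is monochromatic. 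Without this intra-cycle construction, your argument does not cover types (5) and (6) in the monochromatic regime, and the lemma is not proved.
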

\begin{proof}
  We claim that for each type of a directed graph $D$, there is 
  a completion $D'$ of $D$ such that 
  its induced $w$-good permutation $\pi$ on $\delta(w)$ is uniform. Recall that a
  $w$-good permutation $\pi$ is uniform if 
  $\pi^{(2)}$ has a list of orbits $X_1$, $X_2$, $\ldots$, $X_t$
  satisfying the following conditions:
  \begin{enumerate}[(i)]
  \item If $\{x,y\}\in X_i$, then $x$ and $y$ do not share a vertex
    other than $w$ or $w^{-1}$ in $G$.
  \item 
    There is a constant $c>0$ such that for every edge $e\in \delta(w)$, 
    \[
    \abs{ \{ (X_i,F): 1\le i\le t,~F\in X_i \text{ and } e\in F\} } = c.\]
  \end{enumerate}

  \medskip\noindent Case 1: Suppose that $D$ is of type (1) or (4) with $k$ components.
  Then There is a unique completion $D'$ of $D$.
  It is easy to verify that the list of all orbits of $\pi^{(2)}$
  satisfies the conditions (i) and (ii) where $c=k-1$.

  \medskip\noindent Case 2: 
  Suppose that $D$ is of type (2).
  Then $D$ consists of a monochromatic path $P$ and one or two
  short cycles. 
  A completion $D'$ of $D$ is unique, as it is obtained by adding an
  edge from the terminal vertex of $P$ to the initial vertex of $P$.
Let $\pi$ be the permutation of $\delta(w)$ induced by $D'$.
  Let $x_1,x_2,\ldots,x_m$ be the edges in $\delta(w)$ that are in
  $P$ such that $x_i^\pi=x_{i+1}$ for all $i=1,2,\ldots,m$ where $x_{m+1}=x_1$.
  Let $y_1\in \delta(w)$ be the vertex in
  the first short cycle such that $y_1^\pi=y_1$. If $D$ has two
  cycles, then let $y_2\in\delta(w)$
  be the vertex in the second short cycle such that $y_2^\pi=y_2$.

  Then 
  $O_j=\{\{x_i,y_j\}: 1\le i\le m\}$ is an orbit of $\pi^{(2)}$
  satisfying (i).
  If $m>1$, then $O_P=\{\{x_i,x_{i+1}\}: 1\le i\le m\}$ is an orbit of $\pi^{(2)}$
  satisfying (i) in which each $x_i$ appears twice if $m>2$ and each
  $x_i$ appears once if
  $m=2$.

  If $D$ has only one cycle, then each $x_i$ appears once and $y_1$
  appears $m$ times in $O_1$. 
  So if $m=1$, then $O_1$ satisfies (i) and (ii).
  If $m=2$, then $O_1$ and $O_P$ form a list of orbits of $\pi^{(2)}$
  satisfying (i) and (ii). If $m>2$, then a list of two copies of $O_1$ and
  $(m-1)$ copies of $O_P$ satisfies (i) and (ii).

If $D$ has two short cycles, then in $O_1$ and $O_2$, 
  each $x_i$ appears twice and each $y_j$ appears $m$ times.
Notice that $\{\{y_1,y_2\}\}$ is an orbit of $\pi^{(2)}$.
  If $m=1$, then a list of $O_1$, $O_2$, and $\{\{y_1,y_2\}\}$ satisfies
  (i) and (ii).
 If $m=2$, then a list of $O_1$ and $O_2$ satisfies (i) and (ii). 
  If $m>3$, then 
  a list of 
 two copies of $O_1$,
 two copies of $O_2$, 
 and $(m-2)$ copies of $O_P$ satisfies (i) and (ii).
  
 \medskip\noindent Case 3: If $D$ is of type (3), then $D$ has a unique
 completion $D'$.  
 Let $\pi$ be the permutation of $\delta(w)$ induced by $D'$.
 Let $y\in \delta(w)$ be a vertex of $D$ in the
 short cycle such that $y^\pi=y$.
 Let $x_1,x_2,\ldots,x_m\in \delta(w)$ be the vertices on the long cycle in
 $D'$ such that $x_i^\pi=x_{i+1}$ for all $i=1,2,\ldots,m$ where
 $x_{m+1}=x_1$. Since $D$ has two paths, $m>1$.
 Then $O_P=\{\{x_i,x_{i+1}\}:i=1,2,\ldots,m\}$ 
 and 
 $O_C=\{\{y,x_i\}: i=1,2,\ldots,m\}$ are orbits of $\pi^{(2)}$.
 In $O_P$, each $x_i$ appears twice if $m>2$ and once if $m=2$.
 In $O_C$ each $x_i$ appears once and $y$ appears $m$ times.
 Now it is routine to create a list of orbits satisfying (i) and (ii)
 by taking copies of $O_C$ and copies of $O_P$.

  \medskip\noindent Case 4: Suppose that $D$ is of type (5) having both red and
  blue vertices or $D$ is of type (7) or (8). 
  Let $D'$ be a completion of $D$ obtained by making each path
  of $D$
  to be a cycle of $D'$.
  Let $x_1,x_2,\ldots,x_m\in \delta(w)$ be vertices in the long cycle
  of $D'$ so that $x_i^\pi=x_{i+1}$ for all $i=1,2,\ldots,m$  where
  $x_{m+1}=x_1$.
  Let $y_1,y_2,\ldots,y_k\in \delta(w)$ be vertices in short cycles of $D'$
  such that $y_i^\pi=y_i$.
  Since $D$ is calm, $k\le m$.
  Let 
  $O_j=\{\{x_i,y_j\}: i=1,2,\ldots,m\}$ for $j=1,2,\ldots,k$
  and 
  $O_P=\{\{x_i,x_{i+1}\}: i=1,2,\ldots,m\}$ where
  $x_{m+1}=x_{1}$.
  In the list of $O_1$, $O_2$, $\ldots$, $O_k$, 
  each $x_i$ appears $k$ times 
  and each $y_j$ appears $m$ times.
  In $O_P$, each $x_i$ appears twice if $m>2$ and once if $m=2$.
 To satisfy (i) and (ii), we can take a list of two copies of each $O_j$
  for $j=1,2,\ldots,k$
  and copies of $O_P$.

  \medskip\noindent Case 5: 
  Suppose that $D$ is a directed graph of type (5) not having both red
  and blue, 
  or $D$ is a directed graph of type (6). 
  Then $D$ has a completion $D'$ consisting of a single cycle.
  Let $\pi$ be the permutation of $\delta(w)$ induced by $D'$.
  Let $x_1,x_2,\ldots,x_m\in\delta(w)$ be vertices in $D$ such that 
  $x_i^\pi=x_{i+1}$ for all $i=1,2,\ldots,m$.
 We 
  $O_P=\{\{x_i,x_{i+\lfloor m/2\rfloor}\}: i=1,2,\ldots,m\}$ where
  $x_{j+m}=x_{j}$ for all $j=1,\cdots,\lfloor m/2\rfloor$.
  Then in $O_P$, each $x_i$ appears twice if $m$ is odd and once if
  $m$ is even. 
  Moreover, since all the vertices of the same color appear
  consecutively  in $D'$ and the number of vertices of the same color
  is at most half of $m$, $O_P$ never contains a pair $\{x_i,x_j\}$ of
  vertices of the same color, red or blue.
  Therefore $O_P$ satisfies (i) and (ii).
  This completes the proof.
\end{proof}

\begin{LEM}\label{lem:uniform}
  There exists a completion $D'$ of $D$ so that the $w$-good
  permutation induced by $D'$ is uniform.
\end{LEM}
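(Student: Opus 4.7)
The proof will combine Lemma~\ref{lem:oddpath} with Lemma~\ref{lem:uniformtype}: first apply the former to decompose $D$ into pieces of the eight listed types, then apply the latter to each piece to obtain a uniform completion, and finally amalgamate the local orbit lists by scaling multiplicities so that a single constant $c$ governs all edges of $\delta(w)$.

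Since the preparatory analysis preceding this lemma gives $2r \le m$ and $2b \le m$, and establishes $r = r'$ and $b = b'$ (so that the numbers of red sources and red sinks in $D$ agree, and likewise for blue), $D$ satisfies the hypotheses of Lemma~\ref{lem:oddpath}. That lemma yields a partition $D = D_1 \sqcup \cdots \sqcup D_s$ in which each $D_j$ is of one of the types (1)--(8). A routine case check on Figure~\ref{fig:types} shows that each $D_j$ is itself color-balanced at its boundary: types (1), (2), (4), (7), (8) are immediate; types (3) and (6) balance because each R-B path is paired with a B-R path; and type (5) balances because every path there is monochromatic. Consequently each $D_j$ admits a completion built purely from edges inside $V(D_j)$, and by Lemma~\ref{lem:uniformtype} we may choose such a completion $D_j'$ whose induced $w$-good permutation $\pi_j$ on $\delta(w) \cap V(D_j)$ is uniform, witnessed by a list $\mathcal{L}_j$ of orbits of $\pi_j^{(2)}$ together with a positive constant $c_j$.

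Set $D' = D_1' \cup \cdots \cup D_s'$; this is a completion of $D$, and the induced $w$-good permutation $\pi$ on $\delta(w)$ is the disjoint union of the $\pi_j$'s, so every orbit appearing in $\mathcal{L}_j$ is also an orbit of $\pi^{(2)}$. Let $C = \operatorname{lcm}(c_1, \ldots, c_s)$ and assemble the global list by taking $C/c_j$ copies of $\mathcal{L}_j$ for each $j$. Condition~(i) of uniformity is inherited term by term. For condition~(ii), each edge $e \in \delta(w)$ belongs to a unique $V(D_j)$; the contributions from $\mathcal{L}_i$ with $i \ne j$ never involve $e$, while $\mathcal{L}_j$ covers $e$ exactly $c_j$ times, so after scaling $e$ is covered precisely $C$ times, with the same constant $C$ for every edge of $\delta(w)$.

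The only nontrivial portion of the argument is the color-balance enumeration justifying that each $D_j$ can be completed in isolation; this is bookkeeping across the eight types rather than a conceptual obstacle, and no part of the proof beyond it is expected to require new ideas.
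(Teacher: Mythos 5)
Your proof is correct and follows essentially the same route as the paper: decompose $D$ via Lemma~\ref{lem:oddpath}, apply Lemma~\ref{lem:uniformtype} to each piece, and amalgamate the orbit lists by scaling each by $\operatorname{lcm}(c_1,\ldots,c_s)/c_j$ so that every edge of $\delta(w)$ is covered the same number of times. The extra color-balance check you insert is not strictly needed (Lemma~\ref{lem:uniformtype} already asserts that each of the eight types admits a completion), but it is a harmless and accurate observation.
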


\begin{proof}
  By Lemma~\ref{lem:oddpath}, $D$ can be partitioned into calm
  subgraphs $D_1$, $D_2$, $\ldots$, $D_t$ of type (1), (2), $\ldots$, (8).
  Lemma~\ref{lem:uniformtype} shows that each $D_i$
  admits a completion that induces a $w$-good uniform permutation
  $\pi_i$
  with a list $L_i$ of orbits of $\pi_{i}^{(2)}$ satisfying (i) and
  (ii).
  Let us assume that each vertex of $D_i$ appears $c_i>0$ times in
  $L_i$.
  Let $c=\operatorname{lcm}(c_1,c_2,\ldots,c_t)$.
  Then let $L$ be the list of orbits obtained by 
  taking   $c/c_i$ copies of $L_i$
  for each $i=1,2,\ldots,t$.
  Then $L$ satisfies (i) and (ii).
  This proves the lemma.
\end{proof}

\begin{REM}
Let us describe combinatorial group theoretic meaning 
of Lemma~\ref{lem:uniform} in the setting of $U\subseteq F_2$
and $G=W(U)$.
For each component of $D$, there exists a cyclic subword, 
called a \emph{$w$-chunk}, of the form
\[
g=u^{\pm1} w^{\pm k}u^{\pm1}
\]
in $U$ such that each length-two subword appearing in $g$
corresponds to one or two vertices (say, $e_i$ or $f_j$) in that component.
Note that we consider only length-two subwords, not cyclic subwords.
For example, the length-five path in Figure~\ref{fig:ex} comes from
the following $w$-chunk in $U$ or $U^{-1}$:
\[u^{-1}w^3u=
\lefteqn{\underbrace{\phantom{u^{-1}\cdot w}}_{f_2}}u^{-1}
\cdot
\lefteqn{\overbrace{\phantom{w\cdot w}}^{e_2=f_3}}w
\cdot 
\lefteqn{\underbrace{\phantom{w\cdot w}}_{e_3=f_4}}w
\cdot
\overbrace{w\cdot u}^{e_4}.\]
The conclusion of Lemma~\ref{lem:uniform} claims that one can partition 
the set of $w$-chunks from $U$ into
$A_1,A_2,\ldots,A_m$ 
such that the ``Whitehead graph'' $G_i$ of each subcollection $A_i$
admits a $w$-good uniform permutation. 
Here, the quotation mark means that
the edges of $G_i\subseteq G$ correspond to length-two subwords from words in $A_i$ and length-two \emph{cyclic} subwords of the form $u^{\pm1}u^{\pm1}$ are not counted.\end{REM}

Now we are ready to prove Conjecture~\ref{con} for $4$-vertex graphs:

\begin{THM}\label{thm:4vertex}
  Let $G$ be the Whitehead graph for a minimal diskbusting list of words in $F_2$. Then $G$ admits a balanced list $\mathcal{C}$ of cycles such that each edge of $G$ appears in the same number of cycles in $\mathcal{C}$.
\end{THM}
\begin{proof}
  Let $w$ be a vertex of minimum degree.
  By Lemma~\ref{lem:uniform}, $G$ has a $w$-good uniform permutation
  $\pi$ on $\delta (w)$.
  By Lemma~\ref{lem:inductive}, 
  $G$ has a nonempty list of cycles satisfying (a), (b), and (c).
\end{proof}
We remark that Conjecture~\ref{con} is true for subdivisions of connected $4$-vertex graphs because of the ``same number of appearances'' condition of Theorem~\ref{thm:4vertex}.
In particular, we verify Tiling Conjecture for rank-two free groups:

\begin{COR}\label{cor:f2}
A minimal and diskbusting list of cyclically reduced words in $F_2$ is polygonal.
\end{COR}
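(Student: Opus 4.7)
The plan is to package Theorem~\ref{thm:4vertex} via the equivalence established in Proposition~\ref{prop:equiv}; there is essentially no new content beyond translating between the combinatorial and the free-group settings. Indeed, Corollary~\ref{cor:f2} is exactly the $n'=2$ case of Tiling Conjecture, and Proposition~\ref{prop:equiv} has already reduced that case to Conjecture~\ref{con} for graphs on $2n' = 4$ vertices, which is precisely what Theorem~\ref{thm:4vertex} supplies.

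Concretely, I would start with an arbitrary minimal and diskbusting list $U$ of cyclically reduced words in $F_2$ and consider its Whitehead graph $W(U)$, whose vertex set $\mathcal{A}_2 \cup \mathcal{A}_2^{-1}$ has four elements. By Proposition~\ref{prop:whitehead}, $W(U)$ is connected and satisfies $\lambda(v,\mu(v)) = \deg(v)$ for every vertex $v$, where $\mu$ is the fixed point free involution $v \mapsto v^{-1}$. The connecting bijections $\sigma_v\colon \delta(v)\to\delta(\mu(v))$ defined in Section~\ref{subsec:connecting} automatically satisfy $\sigma_{\mu(v)} = \sigma_v^{-1}$, because reading the cyclic subword $x_i x_{i+1}$ of a word in $U$ gives a pairing of the corresponding corners that is symmetric in the two endpoints of the $a$-edge.

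Next, I would apply Theorem~\ref{thm:4vertex} to $W(U)$. The theorem produces a nonempty list of cycles verifying properties (a), (b), and (c); combining (a) with the existence of a cycle of length at least three granted by (c) yields exactly the hypothesis of Lemma~\ref{lem:equiv}. Therefore $U$ is polygonal, which is the desired conclusion.

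The main work has already been done in Section~\ref{sec:4vertex}, so there is no genuine obstacle in the corollary itself; the only task is bookkeeping, namely checking that the four hypotheses of Theorem~\ref{thm:4vertex} (connectedness, the involution, local edge-connectivity, and the compatibility $\sigma_{\mu(v)} = \sigma_v^{-1}$) are in force for $W(U)$ and then reading off polygonality through Lemma~\ref{lem:equiv}. One could equivalently write the proof as a single sentence invoking Proposition~\ref{prop:equiv} with $n'=2$ together with Theorem~\ref{thm:4vertex}.
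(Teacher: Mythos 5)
Your proposal is correct and takes the same route as the paper: the corollary is read off from Theorem~\ref{thm:4vertex} via Proposition~\ref{prop:equiv} (equivalently, by directly combining Proposition~\ref{prop:whitehead}, the property $\sigma_{\mu(v)}=\sigma_v^{-1}$ of connecting maps, Theorem~\ref{thm:4vertex}, and Lemma~\ref{lem:equiv}). The more detailed unpacking you give is exactly what the proof of Proposition~\ref{prop:equiv} already carries out, so there is no divergence from the paper's argument.
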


Now Theorem~\ref{thm:main 1} is an immediate consequence of the following.

\begin{COR}\label{cor:f2 2}
For a list $U$ of words in $F_2$, the following are equivalent.
\begin{enumerate}
\item
The list $U$ is diskbusting.
\item
$D(U)$ contains a hyperbolic surface group.
\item
$D(U)$ is one-ended.
\end{enumerate}
\end{COR}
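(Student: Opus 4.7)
The plan is to package $(1) \Leftrightarrow (2)$ from Corollary~\ref{cor:f2} together with Proposition~\ref{prop:tilingequiv}, and to obtain $(1) \Leftrightarrow (3)$ from the Gordon--Wilton criterion recalled in Section~\ref{subsec:split}.

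For $(1) \Rightarrow (2)$, suppose $U$ is diskbusting. The isomorphism class of $D(U)$ is preserved both by replacing each word in $U$ by a cyclic conjugate and by applying an automorphism of $F_2$; the diskbusting property, being a condition on the conjugacy classes of the elements of $U$, is similarly $\mathrm{Aut}(F_2)$-invariant. Thus, by first cyclically reducing and then minimizing the total length along the $\mathrm{Aut}(F_2)$-orbit (cyclically reducing whenever necessary), one may assume that $U$ is minimal, cyclically reduced, and still diskbusting, without altering $D(U)$ up to isomorphism. Corollary~\ref{cor:f2} then yields that $U$ is polygonal, and Theorem~\ref{thm:polygonal} produces a hyperbolic surface subgroup of $D(U)$.

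For $(2) \Rightarrow (1)$, I would prove the contrapositive. If $U$ is not diskbusting in $F_2$, then $F_2 = A \ast B$ for some nontrivial free factors $A$ and $B$ into which the elements of $U$ are conjugate. Since $F_2$ has rank two, both $A$ and $B$ are forced to be infinite cyclic by Grushko's theorem, placing us exactly in the $k = n = 2$ case in the proof of Proposition~\ref{prop:tilingequiv}. The backward implication of that proposition, whose justification invokes only Lemma~\ref{lem:cyclic} and does not require Tiling Conjecture, then shows that $D(U)$ decomposes as a free product of groups each virtually of the form $\Z \times F_s$, and in particular contains no hyperbolic surface subgroup.

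Finally, $(1) \Leftrightarrow (3)$ is the Gordon--Wilton characterization of one-endedness of doubles of free groups~\cite{GW2010}, noted in Section~\ref{subsec:split}. No genuine obstacle arises in this corollary: the substantive content has already been handled in Theorem~\ref{thm:4vertex} and Corollary~\ref{cor:f2}, and the only piece of bookkeeping is the Aut-invariance used in $(1) \Rightarrow (2)$ to reduce a general diskbusting list to a minimal cyclically reduced one before invoking polygonality.
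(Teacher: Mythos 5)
Your proposal is correct and follows essentially the same route as the paper: cite Gordon--Wilton for $(1)\Leftrightarrow(3)$, combine Corollary~\ref{cor:f2} with Theorem~\ref{thm:polygonal} (after reducing to a minimal cyclically reduced representative) for $(1)\Rightarrow(2)$, and apply the backward implication of Proposition~\ref{prop:tilingequiv} with $n=2$ for $(2)\Rightarrow(1)$. Your only variation is unpacking the $k=n=2$ case of Proposition~\ref{prop:tilingequiv} explicitly via Grushko and Lemma~\ref{lem:cyclic}, correctly observing that this direction never invokes the polygonality hypothesis; the paper simply cites the proposition, whose hypothesis for $n=2$ is anyway supplied by Corollary~\ref{cor:f2}.
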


\begin{proof}
(1)$\Leftrightarrow$(3) is well-known and stated in~\cite{GW2010}, for example. 
(1)$\Rightarrow$(2) follows from Corollary~\ref{cor:f2} and Theorem~\ref{thm:polygonal}.
By putting $n=2$ in the Proposition~\ref{prop:tilingequiv}, we have (2)$\Rightarrow$(1).
\end{proof}


\section{Final Remarks}\label{sec:final}
\subsection*{Minimality assumption in Tiling Conjecture}
A graph $G$ is \emph{$2$-connected} if $\abs{V(G)}>2$, 
$G$ is connected, and $G\setminus x$ is connected for every vertex $x$.
It is well-known that a list $U$ of cyclically reduced words in $F_n$ is diskbusting if and only if $W(\phi(U))$ is $2$-connected 
for some $\phi\in\aut(F_n)$~\cite{Stong1997,Stallings1999}. 
However, the minimality assumption in
Tiling Conjecture cannot be weakened to the 
$2$-connectedness of the Whitehead graph;
this is equivalent to saying that the requirement $\lambda(v,v^{-1})=\deg(v)$
in Conjecture~\ref{con} cannot be relaxed to $2$-connectedness.
Daniel Kr\'al'~\cite{Kral2010} kindly provided us Example~\ref{ex:connectivity} showing why this relaxation is not possible. 

\begin{EX}\label{ex:connectivity}
Let $G$ be a $4$-vertex graph  shown in Figure~\ref{fig:connectivity}.
For a vertex $v$ and edges $e\in \delta(v)$ and $f\in \delta(v^{-1})$, we let $f^v=e$ if and only if the number written on $e$ near $v$ coincides with the number written on $f$ near $v^{-1}$.
Actually, $G$ is the Whitehead graph for $w=a (ab^{-1})^3 b^{-2}$.
In Appendix~\ref{sec:appendix}, we will explain why $w$ is not polygonal,
or equivalently,
this graph $G$ does not admit a balanced list of cycles.
Note that $G$ is $2$-connected but $\lambda(a,a^{-1}) = 3 < 4 = \deg(a)$. 
\end{EX}

\begin{figure}
  \centering
  \tikzstyle{v}=[circle,solid, solid, fill=black, inner sep=0pt, minimum width=4pt]
 \tikzstyle{every edge}=[-,draw]
  \tikzstyle{l}=[circle,inner sep=0pt,fill=white]
  \begin{tikzpicture}
    \node [v] at (-3,0) (v0) {};
    \node [v] at (-3,2) (v1) {} 
    edge node [l,pos=0.2] {${}_1$} node [l,pos=0.8] {${}_2$} (v0)
	;
 	\node [v] at (-1,0) (v2) {}
    edge node [l,pos=0.2] {${}_1$} node [l,pos=0.8] {${}_3$} (v0)
    edge [bend right] node [l,pos=0.2] {${}_5$} node [l,pos=0.8] {${}_1$} (v0)
    edge [bend left] node [l,pos=0.2] {${}_2$} node [l,pos=0.8] {${}_4$} (v0)
	;
   \node [v] at (-1,2) (v3) {}
   edge node [l,pos=0.2] {${}_4$} node [l,pos=0.8] {${}_3$} (v2)
    edge [bend left] node [l,pos=0.2] {${}_5$} node [l,pos=0.8] {${}_4$} (v2)
    edge [bend left]   node [l,pos=0.2] {${}_3$} node [l,pos=0.8] {${}_4$}  (v1)
    edge   node [l,pos=0.2] {${}_2$} node [l,pos=0.8] {${}_3$}  (v1)
    edge [bend right] node [l,pos=0.2] {${}_1$} node [l,pos=0.8] {${}_2$} (v1);
    \draw (v0) node [left] {$a^{-1}$};
    \draw (v1) node [left] {$a$};
    \draw (v2) node [right] {$b^{-1}$};
    \draw (v3) node [right] {$b$};
\end{tikzpicture} 
  \caption{Example~\ref{ex:connectivity}.}
  \label{fig:connectivity}
\end{figure}

\subsection*{Non-virtually geometric words}
Let $H_n$ denote a $3$-dimensional handlebody of genus $n$. 
A word $w$ in $F_n$ can be realized as an embedded curve $\gamma\subseteq H_n$.
A word $w$ is said to be \emph{virtually geometric} 
if there exists a finite cover $p\co H'\to H_n$ such that
$p^{-1}(\gamma)$ is homotopic to a $1$-submanifold on the boundary of $H'$~\cite{GW2010}.
Using Dehn's lemma, Gordon and Wilton~\cite{GW2010} proved that if $w\in F_n$ is diskbusting and virtually geometric, then $D(\{w\})$ contains a surface group;
this also follows from the fact that a minimal diskbusting geometric word is polygonal~\cite{Kim2009}.
On the other hand, Manning provided examples of minimal diskbusting, non-virtually
geometric words as follows. 

\begin{THM}[Manning~\cite{Manning2009}]\label{thm:manning}
If the Whitehead graph for a word $w$ in $F_n$ is non-planar, $k$-regular and $k$-edge-connected for some
$k\ge3$, then $w$ is not virtually geometric.
\end{THM}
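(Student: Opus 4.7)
The plan is to show the contrapositive: if $w$ is virtually geometric, then $W(w)$ either fails non-planarity, fails $k$-regularity, fails $k$-edge-connectivity, or has $k\le 2$. So let $p\co H'\to H_n$ be a finite cover such that $p^{-1}(\gamma)$ is homotopic to a 1-submanifold $L\subseteq \partial H'$. Then $\pi_1(H')\cong F_m$ for some $m$, the multi-curve $L$ represents a list $U$ of cyclically reduced words in $F_m$, and $U$ is \emph{geometric}: its components are realized by disjoint simple closed curves on the genus-$m$ surface $\partial H'$.

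First I would establish that the Whitehead graph of a geometric list is planar. If the components of $L$ lie on $\partial H'$ and are cut by a system of meridian disks $\Delta_1,\ldots,\Delta_m$ of $H'$, then each arc of $L\setminus \bigcup \partial\Delta_j$ on $\partial H'$ corresponds to an edge of $W(U)$, and the cyclic order around each $\partial \Delta_j$ matches the cyclic order of the corresponding vertex-link in $W(U)$. Thickening these arcs in $\partial H'$ and collapsing each $\partial\Delta_j$ to a single vertex produces a planar embedding of $W(U)$, because the surface pieces between meridians are planar annuli.

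Second I would relate $W(U)$ to $W(w)$ through the handlebody cover. A meridian system of $H_n$ lifts to a meridian system of $H'$, and the combinatorics of how $p^{-1}(\gamma)$ meets this lifted meridian system is a covering combinatorial object over the combinatorics of $\gamma$ meeting the downstairs meridian system. Concretely, after fixing a bouquet spine of $H_n$ inducing the basis $\mathcal{A}_n$, the pre-image spine in $H'$ gives a basis for $F_m$ with respect to which $W(U)$ admits a graph morphism onto $W(w)$ that is locally bijective at each vertex: vertices and incident half-edges of $W(U)$ cover those of $W(w)$, and the connecting maps $\sigma_v$ lift to the connecting maps for $U$. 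Hence $W(U)$ is a covering graph of $W(w)$ in the usual sense.

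The main obstacle, and the heart of the proof, is to show that no finite covering graph of $W(w)$ is planar under the hypotheses that $W(w)$ is non-planar, $k$-regular, and $k$-edge-connected with $k\ge 3$. For this step, I would try two approaches in parallel. The structural approach: exhibit a $K_5$ or $K_{3,3}$ topological minor in $W(w)$ whose branch paths are internally disjoint from a chosen small set of "separating" edges; use $k$-edge-connectivity ($k\ge 3$) to show that at each vertex the cyclic local structure forces any lift of such a minor to remain a topological minor in the cover, because the $3$-edge-connectivity prevents the lift from "splitting" the branch vertices. The numerical approach: bound the genus of a cover using Euler characteristic together with $k$-regularity — a covering graph of $W(w)$ of degree $d$ has $2nd$ vertices and $nkd$ edges, while planarity would force $nkd \le 3(2nd)-6$, giving $k\le 6$, and a refinement using bipartiteness or girth considerations (arising from the fact that $W(w)$ has no loops and specific local structure forced by being a Whitehead graph) should tighten this to rule out $k\ge 3$ when the base is non-planar. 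Combining either approach with the planarity of $W(U)$ from the first step yields the required contradiction.
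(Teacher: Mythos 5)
The paper does not prove this theorem; it states it with a citation to Manning~\cite{Manning2009} and uses it as a black box (to justify Corollary~\ref{cor:regular}). So there is no internal proof to compare against, and your proposal has to be judged on its own terms.

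The first two steps are in the right spirit: geometric multi-curves on $\partial H'$ do have planar Whitehead graphs (your cut-along-meridians picture is correct), and handlebody covers do induce a combinatorial relationship between the lifted Whitehead data and $W(w)$. However, Step~2 glosses over a real subtlety: the preimage of a bouquet spine in $H'$ is a graph on $d$ vertices, and a free basis of $\pi_1(H')\cong F_m$ is obtained only after collapsing a spanning tree, so the Whitehead graph of $U$ with respect to that basis has $2m=2((n-1)d+1)$ vertices, not $2nd$. The clean object is the family of Whitehead graphs at the vertices of the lifted spine, and stating how these relate to $W(w)$ takes more care than a one-line appeal to ``covering graph in the usual sense.''

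The decisive gap is Step~3. The assertion you need there --- that a non-planar, $k$-regular, $k$-edge-connected graph with $k\ge 3$ admits no finite planar cover --- is \emph{false} at the level of abstract graph covers. Any non-planar graph that embeds in the projective plane has a planar double cover, obtained by pulling the embedding back through $S^2\to \mathbb{R}\mathrm{P}^2$; the graph $K_{3,3}$ is $3$-regular, $3$-edge-connected, non-planar, and projective-planar, so it is a concrete counterexample satisfying all of your hypotheses. Your Euler-characteristic estimate only yields $k\le 5$ for a planar cover, which says nothing for $k\in\{3,4,5\}$, and the proposed ``tightening via girth or bipartiteness'' is not supplied and does not work in general. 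In short, the heart of the argument is missing: the non-planarity must come from structure that abstract topological graph covers do not see (the ribbon/cyclic-ordering data and the connecting maps $\sigma_v$ that distinguish Whitehead-graph covers arising from handlebody covers), and your proposal never brings this extra structure to bear at the crucial step.
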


Here, a graph $G$ is said to be \emph{$k$-edge-connected} if $\abs{\delta(X)} \ge k$ for all $\emptyset\neq X\subsetneq V(G)$. So, if $W(U)$ is $k$-regular and $k$-edge-connected for a list $U$ of words in $F_n$, then $U$ is minimal and diskbusting.
Hence even for the words provided by Manning, Theorem~\ref{thm:main 2} finds hyperbolic surface groups in the corresponding doubles:

\begin{COR}\label{cor:regular}
If the Whitehead graph for a list $U$ of words in $F_n$ is $k$-regular and $k$-edge-connected for some
$k\ge3$, then $U$ is polygonal. In particular, $D(U)$ contains a hyperbolic surface group.
\end{COR}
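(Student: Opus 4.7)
\medskip
\noindent\textbf{Proof proposal.} The plan is to reduce the statement to Corollary~\ref{cor:regularw}: if I can verify that $U$ is minimal and diskbusting whenever $W(U)$ is $k$-regular and $k$-edge-connected, then Corollary~\ref{cor:regularw} gives polygonality of $U$, and Theorem~\ref{thm:polygonal} supplies the hyperbolic surface subgroup of $D(U)$.

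To carry this out, I rely on Proposition~\ref{prop:whitehead}, which asserts that minimality and diskbustedness together are equivalent to $W(U)$ being connected and satisfying $\lambda(v,v^{-1})=\deg(v)$ for every vertex $v$. Connectedness of $W(U)$ is immediate from $k$-edge-connectedness with $k\ge 3\ge 1$: otherwise some non-empty proper $X\subsetneq V(W(U))$ would be a union of components and hence satisfy $|\delta(X)|=0<k$. For the local-connectivity condition, the bound $\lambda(v,v^{-1})\le\deg(v)=k$ is automatic because any family of edge-disjoint paths from $v$ to $v^{-1}$ uses distinct edges of $\delta(v)$; in the other direction Menger's theorem gives
\[
\lambda(v,v^{-1}) \;=\; \min\{\,|\delta(X)|:v\in X,\ v^{-1}\notin X\,\} \;\ge\; k,
\]
since each such $X$ is a non-empty proper subset of $V(W(U))$ and $k$-edge-connectedness forces $|\delta(X)|\ge k$. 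Thus $\lambda(v,v^{-1})=k=\deg(v)$.

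With $U$ now known to be minimal and diskbusting, and since $U$ is by hypothesis $k$-regular, Corollary~\ref{cor:regularw} concludes that $U$ is polygonal, and Theorem~\ref{thm:polygonal} then provides the desired hyperbolic surface subgroup of $D(U)$. There is no substantive obstacle here --- the argument is essentially a one-line application of Menger's theorem together with the already-established Corollary~\ref{cor:regularw}. The only point of mild caution is that Corollary~\ref{cor:regularw} requires $n>1$: this is genuinely necessary, because by Lemma~\ref{lem:cyclic} a double of $\Z$ is virtually $\Z\times F_s$ and cannot contain a hyperbolic surface group. This assumption is however implicit in the intended application to Manning's Theorem~\ref{thm:manning}, whose non-planarity hypothesis forces $W(U)$ to have at least five vertices and hence $n\ge 3$.
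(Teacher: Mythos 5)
Your proposal is correct and follows the same route the paper intends: $k$-edge-connectedness plus Menger's theorem verifies the hypotheses of Proposition~\ref{prop:whitehead}, so $U$ is minimal and diskbusting, and Corollary~\ref{cor:regularw} (equivalently, Theorem~\ref{thm:main 2}) plus Theorem~\ref{thm:polygonal} finishes the job. Your aside on the implicit $n>1$ requirement is a fair observation; the paper leaves this to context (and, as you note, Manning's non-planarity hypothesis renders it harmless in the intended application).
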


\bibliographystyle{amsplain}

\begin{thebibliography}{10}

\bibitem{BFMT2009}
G.~Baumslag, B.~Fine, C.~F. Miller, III, and D.~Troeger, \emph{Virtual
  properties of cyclically pinched one-relator groups}, Internat. J. Algebra
  Comput. \textbf{19} (2009), no.~2, 213--227.

\bibitem{Berge1990}
J.~Berge, \emph{{H}eegaard documentation. {Preprint (c.1990)}},
  \url{http://www.math.uic.edu/~t3m}.

\bibitem{BCF2011}
N.~Brady, M.~Clay, and M.~Forester, \emph{Turn graphs and extremal surfaces in
  free groups}, Topology and geometry in dimension three, Contemp. Math., vol.
  560, Amer. Math. Soc., Providence, RI, 2011, pp.~171--178. \MR{2866930
  (2012h:57001)}

\bibitem{Calegari2009}
D.~Calegari, \emph{scl}, MSJ Memoirs, vol.~20, Mathematical Society of Japan,
  Tokyo, 2009.

\bibitem{Canary1993}
R.~D. Canary, \emph{Ends of hyperbolic {$3$}-manifolds}, J. Amer. Math. Soc.
  \textbf{6} (1993), no.~1, 1--35.

\bibitem{Culler1981}
M.~Culler, \emph{Using surfaces to solve equations in free groups}, Topology
  \textbf{20} (1981), no.~2, 133--145. \MR{605653 (82c:20052)}

\bibitem{Diestel_book}
R.~Diestel, \emph{Graph theory}, fourth ed., Graduate Texts in Mathematics,
  vol. 173, Springer, Heidelberg, 2010. \MR{2744811 (2011m:05002)}

\bibitem{Edmonds1965a}
J.~Edmonds, \emph{Maximum matching and a polyhedron with {$0,1$}-vertices}, J.
  Res. Nat. Bur. Standards Sect. B \textbf{69B} (1965), 125--130. \MR{0183532
  (32 \#1012)}

\bibitem{GLR2004}
C.~McA. Gordon, D.~D. Long, and A.~W. Reid, \emph{Surface subgroups of
  {C}oxeter and {A}rtin groups}, J. Pure Appl. Algebra \textbf{189} (2004),
  no.~1--3, 135--148. \MR{2038569 (2004k:20077)}

\bibitem{GW2010}
C.~McA. Gordon and H.~Wilton, \emph{On surface subgroups of doubles of free
  groups}, J. Lond. Math. Soc. (2) \textbf{82} (2010), no.~1, 17--31.
  \MR{2669638 (2011k:20085)}

\bibitem{Gromov1987}
M.~Gromov, \emph{Hyperbolic groups}, Essays in group theory, Math. Sci. Res.
  Inst. Publ., vol.~8, Springer, New York, 1987, pp.~75--263.

\bibitem{Gromov1993}
\bysame, \emph{Asymptotic invariants of infinite groups}, Geometric group
  theory, {V}ol.\ 2 ({S}ussex, 1991), London Math. Soc. Lecture Note Ser., vol.
  182, Cambridge Univ. Press, Cambridge, 1993, pp.~1--295. \MR{1253544
  (95m:20041)}

\bibitem{KM2012}
J.~Kahn and V.~Markovic, \emph{Immersing almost geodesic surfaces in a closed
  hyperbolic three manifold}, Ann. of Math. (2) \textbf{175} (2012), no.~3,
  1127--1190. \MR{2912704}

\bibitem{Kim2009}
S.~Kim, \emph{Geometricity and polygonality in free groups}, Internat. J.
  Algebra Comput. \textbf{21} (2011), no.~1-2, 235--256. \MR{2787460}

\bibitem{KW2012}
S.~Kim and H.~Wilton, \emph{Polygonal words in free groups}, Q. J. Math.
  \textbf{63} (2012), no.~2, 399--421. \MR{2925298}

\bibitem{Kral2010}
D.~Kr\'al', \emph{Personal communication}, 2010.

\bibitem{LP1986}
L.~Lov{\'a}sz and M.~D. Plummer, \emph{Matching theory}, North-Holland
  Mathematics Studies, vol. 121, North-Holland Publishing Co., Amsterdam, 1986,
  Annals of Discrete Mathematics, 29. \MR{859549 (88b:90087)}

\bibitem{Manning2009}
J.~F. Manning, \emph{Virtually geometric words and {W}hitehead's algorithm},
  Math. Res. Lett. \textbf{17} (2010), no.~5, 917--925. \MR{2727618}

\bibitem{Menger1927}
K.~Menger, \emph{Zur allgemeinen kurventheorie}, Fund. Math. \textbf{10}
  (1927), 96--115.

\bibitem{RVW2007}
A.~Roig, E.~Ventura, and P.~Weil, \emph{On the complexity of the {W}hitehead
  minimization problem}, Internat. J. Algebra Comput. \textbf{17} (2007),
  no.~8, 1611--1634.

\bibitem{Schupp1980}
P.~E. Schupp, \emph{Quadratic equations in groups, cancellation diagrams on
  compact surfaces, and automorphisms of surface groups}, Word problems, {II}
  ({C}onf. on {D}ecision {P}roblems in {A}lgebra, {O}xford, 1976), Stud. Logic
  Foundations Math., vol.~95, North-Holland, Amsterdam, 1980, pp.~347--371.
  \MR{579952 (81i:20040)}

\bibitem{Seymour1979a}
P.~Seymour, \emph{On multicolourings of cubic graphs, and conjectures of
  {F}ulkerson and {T}utte}, Proc. London Math. Soc. (3) \textbf{38} (1979),
  no.~3, 423--460. \MR{532981 (81j:05061)}

\bibitem{Stallings1999}
J.~R. Stallings, \emph{Whitehead graphs on handlebodies}, Geometric group
  theory down under ({C}anberra, 1996), de Gruyter, Berlin, 1999, pp.~317--330.

\bibitem{Stong1997}
R.~Stong, \emph{Diskbusting elements of the free group}, Math. Res. Lett.
  \textbf{4} (1997), no.~2-3, 201--210. \MR{1453054 (98h:20049)}

\bibitem{Touikan2010}
N.~Touikan, \emph{Personal communication}, 2010.

\bibitem{Whitehead1936}
J.~H.~C. Whitehead, \emph{On certain sets of elements in a free group}, Proc.
  London Math. Soc. \textbf{s2--41} (1936), no.~1, 48--56.

\end{thebibliography}
\def\soft#1{\leavevmode\setbox0=\hbox{h}\dimen7=\ht0\advance \dimen7
  by-1ex\relax\if t#1\relax\rlap{\raise.6\dimen7
  \hbox{\kern.3ex\char'47}}#1\relax\else\if T#1\relax
  \rlap{\raise.5\dimen7\hbox{\kern1.3ex\char'47}}#1\relax \else\if
  d#1\relax\rlap{\raise.5\dimen7\hbox{\kern.9ex \char'47}}#1\relax\else\if
  D#1\relax\rlap{\raise.5\dimen7 \hbox{\kern1.4ex\char'47}}#1\relax\else\if
  l#1\relax \rlap{\raise.5\dimen7\hbox{\kern.4ex\char'47}}#1\relax \else\if
  L#1\relax\rlap{\raise.5\dimen7\hbox{\kern.7ex
  \char'47}}#1\relax\else\message{accent \string\soft \space #1 not
  defined!}#1\relax\fi\fi\fi\fi\fi\fi}
\providecommand{\bysame}{\leavevmode\hbox to3em{\hrulefill}\thinspace}
\providecommand{\MR}{\relax\ifhmode\unskip\space\fi MR }
\providecommand{\MRhref}[2]{%
  \href{http://www.ams.org/mathscinet-getitem?mr=#1}{#2}
}
\providecommand{\href}[2]{#2}

\appendix
\section{Example~\ref{ex:connectivity}}\label{sec:appendix}
We will show that the word $w$ in Example~\ref{ex:connectivity} is not
polygonal. We do this by reducing the question to a nonnegative integral LP problem which is noted in Remark~\ref{rem:polynomial time}.
This reduction easily generalizes to free groups of arbitrary ranks.

We follow the notation in Example~\ref{ex:connectivity}.
Let $\mathcal{C}$ be the set of simple cycles in $G$, and 
 $\mathcal{R}$ be the disjoint union of $\{(a,i,j) : 1\le i<j\le 4\}$ and $\{(b,i,j) : 1\le i<j\le 5\}$.
For each $C\in \mathcal{C}$, we define $p_C$ to be the $ \mathcal{R}\times 1$ vector such that:
\begin{enumerate}[(i)]
\item
the $(a,i,j)$-th row element of $p_C$ is $1$ if $C$ contains the two edges labeled by $i$ and $j$ incident with the vertex $a$, and $0$ otherwise;
\item
the $(b,i,j)$-th row element of $p_C$ is $1$ if $C$ contains the two edges labeled by $i$ and $j$ incident with the vertex $b$, and $0$ otherwise.
\end{enumerate}
Similarly, an  $ \mathcal{R}\times 1$ vector $q_C$ is defined by:
\begin{enumerate}[(i)]
\item
the $(a,i,j)$-th row element of $q_C$ is $1$ if $C$ contains the two edges labeled by $i$ and $j$ incident with the vertex $a^{-1}$, and $0$ otherwise;
\item
the $(b,i,j)$-th row element of $q_C$ is $1$ if $C$ contains the two edges labeled by $i$ and $j$ incident with the vertex $b^{-1}$, and $0$ otherwise.
\end{enumerate}
Now we define an $\mathcal{R}\times \mathcal{C}$ matrix $M$ by
declaring that the $C$-column is the vector $p_C-q_C$ for each
$C\in\mathcal{C}$. Then $w$ is polygonal if and only if there exists a non-zero nonnegative integer solution to $Mx = 0$.
With suitable orderings on $\mathcal{C}$ and $\mathcal{R}$,
we obtain the matrix $M$ in Figure~\ref{fig:m}.
\begin{figure}
\setcounter{MaxMatrixCols}{30}
\[\left(
 \begin{smallmatrix}
  0 & 0 & 0 & 0 & 0 & -1 & 0 & 1 & 1 & 1 & 1 & 1 &  1 & 0 & 0 & 0 & 0 & 0 & 0 & 0 & 0 & 0 & 0 & 0 & 0\\
  0 & 0 & 0 & 0 & -1 & 0 & 0 & 0 & 0 & 0 & 0 & 0 & 0 & 1 & 1 & 1 & 1 & 1 & 1 & 0 & 0 & 0 & 0 & 0 & 0 \\     
  0 & 1 & 0 & 0 & 0 & 0 & -1 & 0 & 0 & 0 & 0 & 0 & 0 & 0 & 0 & 0 & 0 & 0 & 0 & 0 & 0 & 0 & 0 & 0 & 0 \\     
  0 & 0 & 0 & 0 & 0 & 0 & 0 & -1 & 0 & 0 & -1 & 0 & 0 & -1 & 0 & 0 & -1 & 0 & 0 & 0 & 1 & 1 & 0 & 1 & 1 \\
  0 & 0 & 1 & 0 & 0 & 0 & 0 & 0 & 0 & -1 & 0 & 0 & -1 & 0 & 0 & -1 & 0 & 0 & -1 & 0 & 0 & -1 & 0 & 0 & -1 \\
  0 & 0 & 0 & 1 & 0 & 0 & 0 & 0 & -1 & 0 & 0 & -1& 0 & 0 & -1 & 0 & 0 & -1 & 0 & 0 & -1 & 0 & 0 & -1 & 0 \\
  1 & 0 & 0 & 0 & 0 & 0 & 0 & -1 & 0 & 0 & 0 & 0 & 0 & -1 & 0 & 0 & 0 & 0 & 0 & -1 & 0 & 0 & 0 & 0 & 0 \\   
  0 & 0 & 0 & 0 & 0 & 0 & 0 & 1 & 1 & 1 & -1 & 0 & 0 & 0 & 0 & 0 & -1 & 0 & 0 & 0 & 0 & 0 & -1 & 0 & 0 \\   
  -1 & 0 & 0 & 0 & 0 & 0 & 0 & 0 & 0 & 0 & 1 & 1 & 1 & 0 & 0 & 0 & 0 & 0 & 0 & 0 & 0 & 0 & 0 & 0 & 0 \\     
  0 & 0 & 0 & 0 & 0 & -1 & 0 & 0 & 0 & 0 & 0 & 0 & 0 & 1 & 1 & 1 & 0 & 0 & 0 & 0 & 0 & 0 & 0 & 0 & 0 \\     
  0 & 0 & 0 & 0 & 0 & 0 & 0 & 0 & 0 & -1 & 0 & 0 & 0 & 0 & 0 & -1 & 1 & 1 & 1 & 0 & 0 & -1 & 0 & 0 & 0 \\   
  0 & 1 & 0 & 0 & 0 & 0 & 0 & 0 & 0 & 0 & 0 & 0 &  -1 & 0 & 0 & 0 & 0 & 0 & -1 & 0 & 0 & 0 & 0 & 0 & -1 \\  
  0 & 0 & 0 & 0 & -1 & 0 & 0 & 0 & 0 & 0 & 0 & 0 & 0 & 0 & 0 & 0 & 0 & 0 & 0 & 1 & 1 & 1 & 0 & 0 & 0 \\     
  0 & 0 & 0 & 0 & 0 & 0 & 0 & 0 & -1 & 0 & 0 & 0 & 0 & 0 & -1 & 0 & 0 & 0 & 0 & 0 & -1 & 0 & 1 & 1 & 1 \\   
  0 & 0 & 1 & 0 & 0 & 0 & 0 & 0 & 0 & 0 & 0 & -1 & 0 & 0 & 0 & 0 & 0 & -1 & 0 & 0 & 0 & 0 & 0 & -1 & 0 \\   
  0 & 0 & 0 & 1 & 0 & 0 & -1 & 0 & 0 & 0 & 0 & 0 & 0 & 0 & 0 & 0 & 0 & 0 & 0 & 0 & 0 & 0 & 0 & 0 & 0
 \end{smallmatrix}\right)
\]
  \caption{Matrix $M$}
  \label{fig:m}
\end{figure}
Using computer algebra softwares such as Mathematica, one can verify that $Mx=0$ does not have non-zero nonnegative integer solutions. Hence, $w$ is not polygonal.

\end{document}